\let\shortcite\citeyearpar
\theoremstyle{plain}
\newtheorem{theorem}{Theorem}[section]
\newtheorem{lemma}[theorem]{Lemma}
\newtheorem{corollary}[theorem]{Corollary}
\theoremstyle{definition}
\theoremstyle{remark}
\renewcommand{\Pr}{\operatorname*{\mathbb{P}}}
\newcommand{\Var}{\operatorname*{\mathrm{Var}}}
\newcommand{\Exp}{\operatorname*{\mathbb{E}}}
\newcommand{\from}{\leftarrow}
\newcommand{\E}{\Exp}
\newcommand{\poly}{\operatorname*{\mathrm{poly}}}
\newcommand{\Normal}{\mathcal{N}}
\newcommand{\norm}[1]{\|#1\|}
\newcommand{\abs}[1]{|#1|}
\newcommand{\inner}[1]{\langle#1\rangle}
\newcommand{\Real}{\mathbb{R}}
\newcommand{\1}{\mathds{1}}
\newcommand{\Grad}{\nabla}
\newcommand{\tr}{\mathrm{tr}}
\newcommand{\lam}{\lambda}
\newcommand{\eps}{\epsilon}
\newcommand{\R}{\mathbb{R}}
\newcommand{\cN}{\mathcal{N}}
\DeclareMathOperator{\Tr}{Tr}
\newcommand{\wh}{\widehat}
\newcommand{\I}{\mathcal{I}}
\newcommand{\deff}{d_{\mathrm{eff}}}
\newcommand{\IQR}{\mathrm{IQR}}
\newcommand{\lambdahat}{\hat{\lambda}}
\newcommand{\eff}{\text{eff}}
\newcommand{\grad}{\nabla}
\newcommand{\sign}{\text{sign}}
\newcommand{\e}{\epsilon}
\newcommand{\J}{\textbf{J}}
\icmltitlerunning{High-dimensional Location Estimation via Norm Concentration for Subgamma Vectors \hfill \thepage}
\begin{document}

\twocolumn[
\icmltitle{High-dimensional Location Estimation via Norm Concentration for Subgamma Vectors}



\icmlsetsymbol{equal}{*}

\begin{icmlauthorlist}
\icmlauthor{Shivam Gupta}{austin}
\icmlauthor{Jasper C.H. Lee}{wisc}
\icmlauthor{Eric Price}{austin}
\end{icmlauthorlist}

\icmlaffiliation{austin}{The University of Texas at Austin}
\icmlaffiliation{wisc}{University of Wisconsin-Madison}


\icmlkeywords{Mean estimation, Parametric estimation, Location estimation, High-dimensional statistics, subgaussian estimator}

\vskip 0.3in
]



\printAffiliations{}

\begin{abstract}
  In location estimation, we are given $n$ samples from a known distribution $f$ shifted by an unknown translation $\lambda$, and want to estimate $\lambda$ as precisely as possible.  Asymptotically, the maximum likelihood estimate achieves the Cram\'er-Rao bound of error $N(0, \frac{1}{n\I})$, where $\I$ is the Fisher information of $f$.  However, the $n$ required for convergence depends on $f$, and may be arbitrarily large.  We build on the theory using \emph{smoothed} estimators to bound the error for finite $n$ in terms of $\I_r$, the Fisher information of the $r$-smoothed distribution.  As $n \to \infty$, $r \to 0$ at an explicit rate and this converges to the Cram\'er-Rao bound.  We (1) improve the prior work for 1-dimensional $f$ to converge for constant failure probability in addition to high probability, and (2) extend the theory to high-dimensional distributions.  In the process, we prove a new bound on the norm of a high-dimensional random variable whose 1-dimensional projections are subgamma, which may be of independent interest.
\end{abstract}

\section{Introduction}
\label{sec:intro}
Location estimation---a variant of mean estimation---is a fundamental problem in parametric statistics.
Suppose there is a translation-invariant model $f^{\lambda}(x) = f(x-\lambda)$ for some known distribution $f$ over $\Real^d$.
The statistician receives $n$ i.i.d.~samples from $f^{\lambda}$ for some arbitrarily chosen \emph{true parameter} $\lambda \in \Real^d$, and the goal is to estimate $\lambda$ with high accuracy, succeeding with high probability over the samples.

In contrast to general mean estimation, which aims to estimate the mean under minimal assumptions on the distribution, here we know the exact shape of the distribution up to translation.  Such additional information allows us to estimate $\lambda$ to higher accuracy. 

The classic ``textbook" theory for location estimation, and indeed for parametric estimation in general, recommends using the \emph{Maximum Likelihood Estimate} (MLE).
The MLE enjoys asymptotic normality: if we fix a distribution $f$ and take the number of samples $n$ to infinity, the distribution of the MLE converges to the multivariate Gaussian $\Normal(\lambda,\frac{1}{n}\I^{-1})$, where $\I$ is the \emph{Fisher information} matrix, defined by
\[ \I = \Exp_{x \sim f}\left[\left(\Grad \log f(x)\right)\left(\Grad \log f(x)\right)^\top\right] \]
As a basic property, if we denote the covariance matrix of $f$ by $\Sigma$, then we always have $\I^{-1} \preceq \Sigma$, implying that the asymptotic performance of the MLE is always at least as good as the sample mean, whose performance is controlled by the covariance $\Sigma$.
Furthermore, the Cr\'{a}mer-Rao bound states that no unbiased location estimator can have covariance smaller than $\frac{1}{n}\I^{-1}$, and so the MLE has the best asymptotic performance of any unbiased estimator.


Even though the textbook theory is satisfying in that the Fisher information essentially captures the information-theoretic limits of location estimation, its predictions may be misleading in practice.
Specifically, this is due to the asymptotic nature of the MLE performance guarantee: we need to take the number of samples $n$ to infinity in order to achieve subgaussian estimation error.
The asymptotic result may have arbitrarily bad dependence on $n$ in terms of the model $f$.  While bounds exist in terms of regularity properties of $f$~\cite{Miao:2010,Spokoiny:2011,Pinelis:2017}, these bounds are infinite for simple examples like the Laplace distribution. 
The research goal, therefore, is to establish a \emph{finite-sample} theory of location estimation, which bounds the estimation error explicitly as a function of $n$, applies to every $f$, and ideally attains even optimal constants in the estimation error.

Recent work by Gupta et al.~\shortcite{Gupta:2022} addressed this question in the special case of 1 dimension.
They showed that, while the MLE can have bad finite-sample performance, it is possible to improve the behavior by a simple adaptation: add Gaussian noise of some appropriately chosen radius $r$, where $r$ decreases with the number of samples, to both the samples and model before performing MLE.
Accordingly, the theoretical guarantees for the \emph{smoothed} MLE replaces the Fisher information of $f$ with the Fisher information of the smoothed distribution $f_r$, also called the smoothed Fisher information $\I_r$.
Smoothed MLE achieves finite-sample subgaussian error bounds analogous to a Gaussian with variance $(1+o(1))\I_r^{-1}$, where the $o(1)$ term can be explicitly calculated and is \emph{independent} of $f$.


\paragraph{Characterization by smoothed Fisher information.}

Our results will follow the approach of Gupta et al.~\citeyearpar{Gupta:2022} and show finite sample bounds in terms of the smoothed Fisher information.
Here, focusing on the 1-dimensional case, we briefly discuss why Fisher information is inadequate and why smoothed Fisher information is a suitable substitute.

\begin{figure}
\centering
\includegraphics[keepaspectratio, width = 4cm]{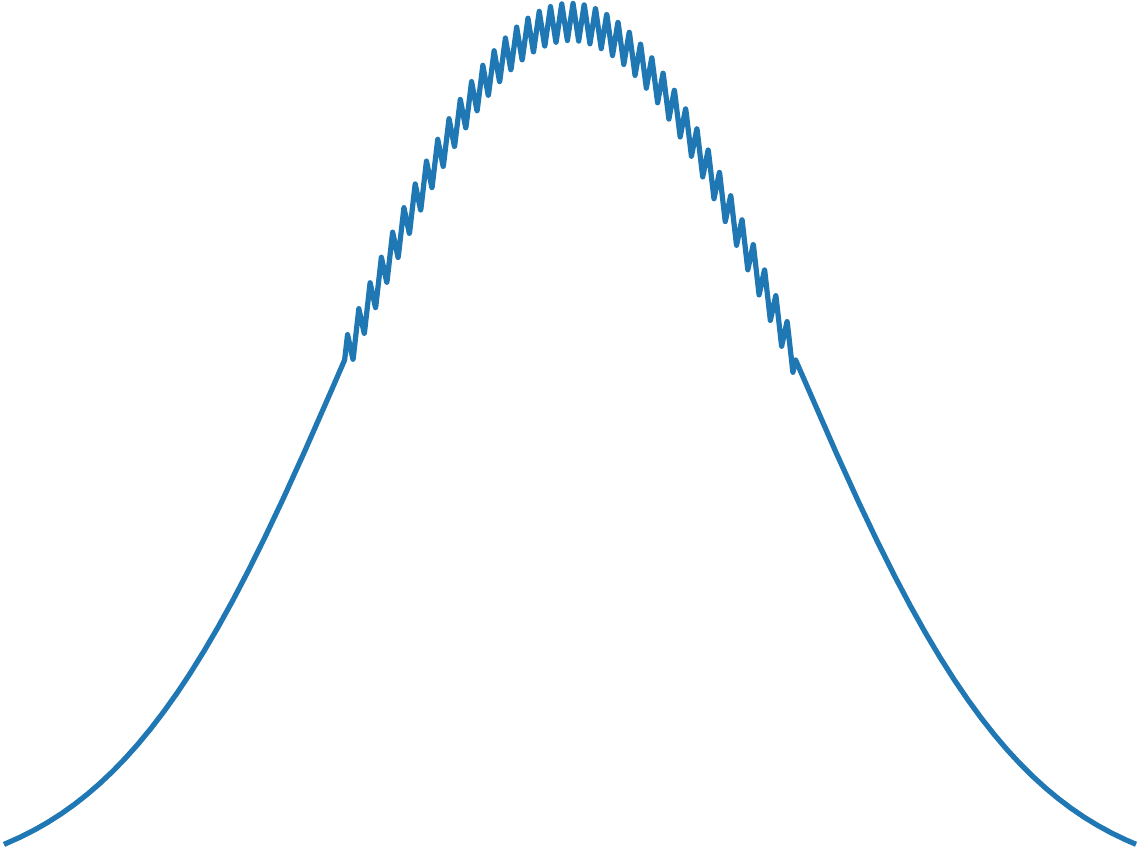}
\caption{Gaussian+Sawtooth Distribution}
\label{fig:sawtooth}
\end{figure}

\begin{figure}
\centering
\includegraphics[keepaspectratio, width = 6cm]{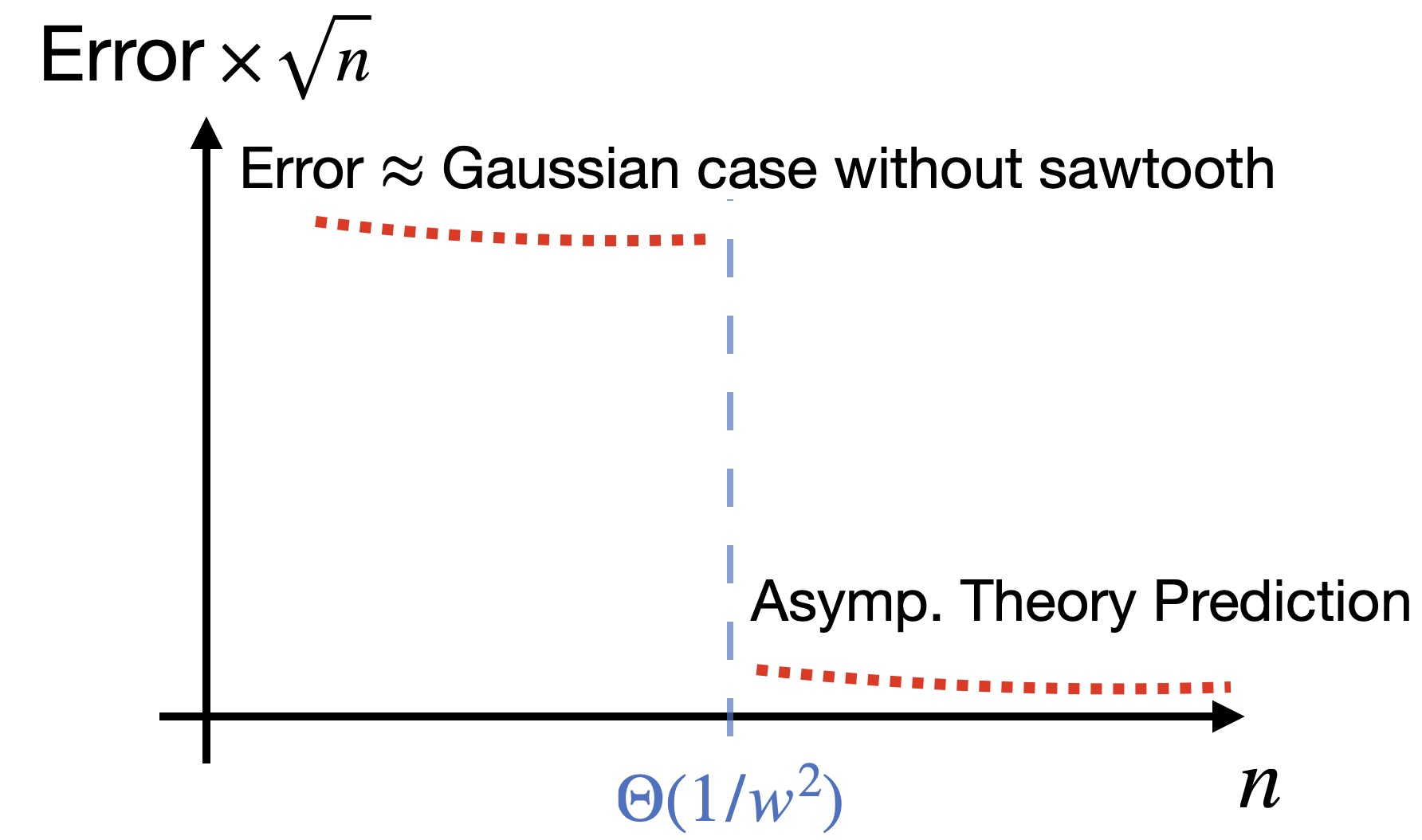}
\caption{Constant probability error lower bound for Gaussian+Sawtooth}
\label{fig:sawtooth_error}
\end{figure}

Consider the ``Gaussian+Sawtooth" distribution shown in Figure~\ref{fig:sawtooth}, which is a sawtooth of tooth width $w$ and slope $\pm\Delta$ added to the central section of the standard Gaussian density.
As $w \to 0$, the density converges to the standard Gaussian, yet the Fisher information grows to $\Theta(\Delta^2)$ as $\Delta \to \infty$.
The asymptotic theory thus predicts an error of $O(1/(\Delta\sqrt{n}))$ with constant probability.

However, Gupta et al.~\citeyearpar{Gupta:2022} showed that for $n \ll 1/w^2$, the constant probability error for \emph{every} algorithm is in fact at least $\Omega(1/\sqrt{n})$, as if the distribution were just a standard Gaussian.
Intuitively, we need to align the model to within a single sawtooth width of $w$ in order to leverage the sawtooth structure for high accuracy estimation.
For a standard Gaussian, $\Omega(1/w^2)$ samples are needed for error less than $w$.
Figure~\ref{fig:sawtooth_error} shows a plot of the constant probability error lower bound for the Gaussian+Sawtooth model, with the error scaled by $\sqrt{n}$ for normalization.

Since the sample threshold depends on $w$, this example shows that there is no algorithm that converges to the asymptotic error in a distribution-independent way.
Concretely, no algorithm can be within a $1+o(1)$ factor of the $\Normal(0,1/(n\I))$ error for a distribution-independent $o(1)$ term.
We therefore need an alternative quantity to replace $\I$ for finite-sample error bounds, which can capture the phase transition in Figure~\ref{fig:sawtooth_error}.

Smoothed Fisher information exhibits this phase transition behavior.
Smoothing by radius $r \gg w$ blurs out the sawtooth structure---$\I_r$ is small and close to the standard Gaussian Fisher information of 1.
On the other hand, smoothing by radius $r \ll w$ preserves the sawtooth and keeps $\I_r$ close to $\I = \Theta(\Delta^2)$.
Both Gupta et al.~\citeyearpar{Gupta:2022} and we leverage this behavior to show finite sample bounds analogous to $(1+o(1))\Normal(0,1/(n \I_r))$, with a $o(1)$ term that is distribution-independent.




We need to choose the smoothing parameter \emph{carefully}, as the smoothed Fisher information can depend delicately on $r$.
Intuitively, we expect $r \to 0$ as $n \to \infty$; however, this is not true of Gupta et al.'s results.
Their choice of smoothing vanishes only in the high-probability regime, i.e.~when both $n \to \infty$ and $\delta \to 0$ for failure probability $\delta$.
Thus, for small constant $\delta$, their results can be very sub-optimal.
One of our new results removes the spurious dependence of $r$ on $\delta$.

\paragraph{Our results.}
In this paper, we improve and extend
the result of Gupta et al.~\citeyearpar{Gupta:2022} in two ways.
First, we show that a
variant of the algorithm has a simpler and better analysis in one
dimension.  This better analysis supports smaller smoothing radius
$r$, and hence higher Fisher information $\I_r$:


\begin{restatable}[1-d Smoothed MLE]{theorem}{OneDGlobalMLE}
\label{thm:1-dGlobalMLE}
  Given a model $f$, let the $r$-smoothed Fisher information of a distribution $f$ be $\I_r$, and let $\IQR$ be the interquartile range of $f$.
Fix the failure probability be $\delta \le 0.5$, and assume that $n \ge c\cdot\log\frac{2}{\delta}$ for some sufficiently large constant $c$.

Choose $r^* =  \Omega((\frac{\log \frac{2}{\delta}}{n})^{1/8}) \IQR$.
Then, with probability at least $1-\delta$, the output $\lambdahat$ of Algorithm~\ref{alg:1-dGlobalMLE} satisfies
\[ |\lambdahat - \lambda| \le \left(1+O\left(\frac{\log \frac{2}{\delta}}{n}\right)^{\frac{1}{10}}\right)\sqrt{\frac{2\log\frac{2}{\delta}}{n \I_{r^*}}}\]
\end{restatable}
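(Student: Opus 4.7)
The plan is to analyze the global smoothed MLE directly as a zero-finding problem. Let $y_i = x_i + z_i$ with $z_i \sim \Normal(0, (r^*)^2)$ independent, so that $y_1, \ldots, y_n$ are i.i.d.\ samples from $f_{r^*}$ shifted by $\lambda$. Writing $\psi(t) = (\log f_{r^*})'(t)$ for the score of the smoothed model and $S(\theta) = \sum_{i=1}^n \psi(y_i - \theta)$ for the estimating function, any smoothed MLE satisfies $S(\lambdahat) = 0$. My strategy has three phases: first localize $\lambdahat$ to a small neighborhood of $\lambda$; then concentrate $S$ pointwise at $\lambda$ and $S'$ uniformly on that neighborhood; finally Taylor-invert to read off $\lambdahat - \lambda$.

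For localization I would argue via the interquartile range: with probability $1 - \delta/4$, the empirical quartiles of the $y_i$ lie within $O(\IQR/\sqrt{n})$ of the true quartiles of $f_{r^*}$ shifted by $\lambda$, which forces $|S(\theta)|$ to be bounded away from zero outside an $O(\IQR)$ window around $\lambda$. Inside that window, the key statistical step is pointwise concentration of $S(\lambda)$. Since $\Exp[\psi(y_i - \lambda)] = 0$, $\Var[\psi(y_i - \lambda)] = \I_{r^*}$, and Gaussian smoothing by $r^*$ makes $\psi$ both Lipschitz and (on a high-probability event) bounded, with constants that scale as inverse polynomials in $r^*$, the summands are subgamma with a subgaussian scale $\sqrt{\I_{r^*}}$ that dominates their subexponential scale once $r^*$ is appropriately chosen. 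A Bernstein-type inequality then yields
\[ |S(\lambda)| \le \sqrt{2 n \I_{r^*} \log(2/\delta)} \cdot (1 + o(1)) \]
with probability $1 - \delta/4$, where the $o(1)$ absorbs into the $(\log(2/\delta)/n)^{1/10}$ factor of the theorem.

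Uniform concentration of $-S'$ around $n \I_{r^*}$ over the localization window follows from a standard net argument, since $\Exp[-\psi'(y_i - \lambda)] = \I_{r^*}$ and $\psi''$ is Lipschitz with controlled constants after smoothing; hence $-S'(\theta) = n \I_{r^*}(1 \pm o(1))$ uniformly. Combining with localization via a Taylor expansion $0 = S(\lambdahat) = S(\lambda) - (-S'(\tilde\theta))(\lambdahat - \lambda)$ at an intermediate $\tilde\theta$ yields $|\lambdahat - \lambda| = |S(\lambda)|/|S'(\tilde\theta)|$, from which the theorem follows. The exponent $1/8$ in $r^*$ is the value that optimally balances three competing errors: the bias from replacing $\I$ by $\I_{r^*}$ grows with $r^*$; the subgamma scale of $\psi$ relative to $\sqrt{\I_{r^*}}$ shrinks with $r^*$; and the uniform control of $S'$ across the localization neighborhood improves with $r^*$.

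The main obstacle, and the place where this analysis improves over Gupta et al., is forcing the leading constant $2$ in the $\sqrt{2 \log(2/\delta)/(n \I_{r^*})}$ bound to hold even when $\delta$ is a fixed constant. Their earlier argument effectively required $\delta \to 0$ so that the Gaussian head of the Bernstein tail on $S(\lambda)$ dominated the sub-exponential remainder; otherwise the remainder term contaminated the constant. Here one must pick $r^*$ aggressively enough (i.e., smaller than in prior work) that $\I_{r^*}$ is close to $\I$, yet large enough that the subexponential scale of $\psi$ is negligible compared to its subgaussian scale, so that the Bernstein tail degenerates into an essentially Gaussian tail all the way down to constant $\delta$. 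Once this balance is struck the remaining concentration steps are routine, and the final multiplicative slack collects into the $1 + O((\log(2/\delta)/n)^{1/10})$ factor.
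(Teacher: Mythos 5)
There is a genuine mismatch and a genuine gap. First, you are analyzing the wrong estimator: Theorem~\ref{thm:1-dGlobalMLE} is a statement about the output of Algorithm~\ref{alg:1-dGlobalMLE}, which does \emph{not} return a root of the empirical score $S(\theta)=\sum_i \psi(y_i-\theta)$. It computes a quantile-based initial estimate $\lambda_1$ from a small fraction of fresh samples and then performs a single Newton-type correction $\lambdahat=\lambda_1-\hat s(\lambda_1)/\I_{r^*}$, dividing by the \emph{population} smoothed Fisher information rather than by an empirical derivative $-S'$. This design is the point of the paper: because only the score at the single (independent) point $\lambda_1$ is needed, the analysis requires only pointwise subgamma concentration of $\hat s(\lambda_1)$ together with a Taylor bound on $\E[s(x-\eps)]$, and never any statement about $S'$. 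Your zero-finding formulation is essentially the estimator of Gupta et al., so even if your argument were completed it would not establish the claimed bound for Algorithm~\ref{alg:1-dGlobalMLE}.

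Second, the step you call routine is exactly the step that is not. To make the Taylor inversion $|\lambdahat-\lambda|=|S(\lambda)|/|S'(\tilde\theta)|$ give a leading constant $1+O((\log\frac{2}{\delta}/n)^{1/10})$, you need $-S'(\theta)=n\I_{r^*}(1\pm o(1))$ \emph{uniformly} over the localization window, with the $o(1)$ at that polynomial rate. Your localization only places $\lambdahat$ in an $O(\IQR)$ window, while $r^*\asymp(\log\frac{2}{\delta}/n)^{1/8}\IQR\ll\IQR$; at distance $|\theta-\lambda|\gg r^*$ even the \emph{expectation} $\E[-\psi'(y-\theta)]$ can differ from $\I_{r^*}$ by constant or larger factors, so the claimed uniform control fails on that window. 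One would instead need localization to within $\ll r^*$ of $\lambda$ (the paper's quantile step achieves $O((\log\frac{2}{\delta}/n)^{2/5})\IQR$ precisely for this reason), and even then, relative $(1+o(1))$ uniform concentration of $S'$ requires variance and Lipschitz bounds on $\psi'$ and $\psi''$ that scale as inverse powers of $r^*$ and are not controlled by $\I_{r^*}$ alone; these are the ``additional loss factors'' that blocked the prior root-finding analysis and forced its $\delta\to 0$ requirement. Your diagnosis that the prior limitation came only from the Bernstein remainder in the tail of $S(\lambda)$ misses this: the pointwise concentration (Fact~\ref{fact:scoreestimate}) was already available in the prior work; what the present paper changes is the algorithm, so that uniform convergence is never needed.
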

The main difference between this result and~\citep{Gupta:2022} is the
dependence on $\delta$: the previous result needed
$\delta \to 0$ for $r$ to decay to 0 and for the leading constant to decay to 1.  In ours, both decay polynomially in $n$ for constant $\delta$.

Consider how this result behaves on the Gaussian+Sawtooth example above (Figure~\ref{fig:sawtooth}), for constant $\delta$.  For small $n$, we will choose $r^* = \frac{1}{\poly(n)} > w$ and get error within $1 + \frac{1}{\poly(n)}$ of the regular Gaussian tail; for large $n$, $r^* \ll w$ and the error is within $1 + \frac{1}{\poly(n)}$ of the asymptotically optimal $\Normal(0,1/(n\I_r))$.  Thus we get the same qualitative transition behavior as Figure~\ref{fig:sawtooth_error}, albeit at a different transition point ($\frac{1}{w^8}$ rather than $\frac{1}{w^2}$).  The prior work~\citep{Gupta:2022} additionally required vanishing $\delta$, roughly $\delta < 2^{-\poly(n)}$, to observe this behavior.



Second, our simpler approach lets us generalize the result to high
dimensions.  We show an analogous result to the one-dimensional
result.  In an ideal world, since the (unsmoothed) MLE satisfies
$(\wh{\lambda} - \lambda) \to \cN(0, \frac{1}{n}\I^{-1})$ asymptotically, we would aim for the Gaussian tail error~(\citet{boucheron}, Example 5.7)
\begin{align}
  \norm{\wh{\lambda} - \lambda}_2 \leq \sqrt{\frac{\Tr(\I^{-1})}{n}} + \sqrt{2 \norm{\I^{-1}} \frac{\log \frac{1}{\delta}}{n}}\label{eq:gaussian-tail}
\end{align}
with probability $1-\delta$.
We show that this \emph{almost} holds.
Let $d_{\eff}(A) = \frac{\Tr(A)}{\norm{A}}$ denote the effective dimension of a positive semidefinite matrix $A$.  If we smooth by a spherical Gaussian $R = r^2 I_d$ for some $r^2 \le \|\Sigma\|$, then for a sufficiently large $n$ as a function of $\|\Sigma\|/r^2, \log\frac{1}{\delta}$, $d_\eff(\Sigma)$, and $d_\eff(\I_R^{-1})$, our error is close
to~\eqref{eq:gaussian-tail} replacing $\I$ with the smoothed Fisher
information $\I_R$.

\begin{theorem}[High-dimensional MLE, Informal; see Theorem~\ref{thm:globalmle_highdim}]\label{thm:globalmle_highdim_informal}
\label{thm:high_dimensional_l2_informal}
Let $f$ have covariance matrix $\Sigma$.  For any $r^2 \leq \norm{\Sigma}$, let $R = r^2 I_d$ and $\I_R$ be the $R$-smoothed Fisher information of the distribution.
For any constant $0 < \eta < 1$, 
\[
\norm{\wh{\lambda}-\lambda}_2 \leq (1 + \eta) \sqrt{\frac{\Tr(\I_R^{-1})}{n}} + 5 \sqrt{\frac{\norm{\I_R^{-1}} \log \frac{4}{\delta}}{n}}
\]
with probability $1-\delta$, for
\[n > O_{\eta}\left( \left(\frac{\norm{\Sigma}}{r^2}\right)^2\left(\log \frac{2}{\delta} + d_{\eff}(\I_R^{-1}) + \frac{d_{\eff}(\Sigma)^2}{d_{\eff}(\I_R^{-1})}\right)\right)\]
\end{theorem}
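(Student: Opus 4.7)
The approach mirrors the 1-dimensional argument in \citet{Gupta:2022}, but lifts each ingredient to $d$ dimensions using this paper's new norm concentration for subgamma vectors in place of the scalar Bernstein inequality.

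The plan is to analyze the MLE through the first-order condition $s(\lambdahat)=0$, where $s(\theta)=\frac{1}{n}\sum_i \Grad \log f_R(x_i-\theta)$, together with the averaged Hessian $\bar H$ along the segment from $\lambda$ to $\lambdahat$. A mean-value expansion gives $\lambdahat-\lambda=\bar H^{-1} s(\lambda)$, so the problem reduces to showing (i) $\bar H$ is spectrally close to $\I_R$ on a small ball around $\lambda$, and (ii) $\|\I_R^{-1} s(\lambda)\|_2$ has near-Gaussian tails with variance proxy $\I_R^{-1}/n$.

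For (ii) I would whiten: by the definition of the smoothed Fisher information, each $\I_R^{-1/2}\Grad\log f_R(x_i-\lambda)$ has mean zero and identity covariance. Gaussian smoothing by radius $r$ gives the pointwise control $|\Grad\log f_R(x)|\lesssim \|x\|/r^2$, which together with the $\Sigma$-bound on the original distribution implies that the one-dimensional projections of each whitened score are subgamma with variance proxy $\asymp 1$ and scale parameter controlled by $\sqrt{\|\Sigma\|\,\|\I_R^{-1}\|}/r^2$. Applying the paper's new norm concentration result for subgamma vectors to the whitened empirical score then yields
\[ \|\I_R^{-1/2} s(\lambda)\|_2 \le (1+\eta)\sqrt{\frac{d_{\eff}(\I_R^{-1})}{n}} + O\!\left(\sqrt{\frac{\log(1/\delta)}{n}}\right), \]
which, after rescaling by $\|\I_R^{-1}\|^{1/2}$, converts to the advertised bound $(1+\eta)\sqrt{\Tr(\I_R^{-1})/n}+5\sqrt{\|\I_R^{-1}\|\log(4/\delta)/n}$.

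For (i), I would apply matrix Bernstein to the summands $-\Grad^2 \log f_R(x_i-\lambda)$, whose mean equals $\I_R$, to bound $\|\I_R^{-1/2}(H(\lambda)-\I_R)\I_R^{-1/2}\|$ in terms of $d_{\eff}(\I_R^{-1})/n$ and $\|\Sigma\|/r^2$ (the latter bounding each summand's spectral norm). Extending this to all $\theta$ in a ball of radius $O(r)$ around $\lambda$ uses the $\poly(1/r)$ third-derivative bounds on $\log f_R$ together with a simple net argument. A preliminary localization step---e.g.\ a coarse M-estimator based on the sample mean (whose error is controlled by $\Sigma$), or a coordinate-wise application of \Cref{thm:1-dGlobalMLE}---brings $\lambdahat$ into this ball with high probability. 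The $d_{\eff}(\Sigma)^2/d_{\eff}(\I_R^{-1})$ term in the sample complexity appears here, since localization must succeed even when $\Sigma$ is much larger than $\I_R^{-1}$.

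The main obstacle is step (ii). A naive per-coordinate or generic chaining argument loses the optimal leading constant $(1+\eta)\sqrt{\Tr(\I_R^{-1})/n}$ and replaces it by something scaling with $\sqrt{d\log(1/\delta)/n}$, which would be much worse whenever $d_{\eff}(\I_R^{-1})\ll \log(1/\delta)$. Recovering exactly the Gaussian-tail split of $\cN(0,\I_R^{-1}/n)$ into a ``trace'' term plus a ``$\log(1/\delta)$'' term is precisely what the new subgamma norm concentration is designed to deliver, and carrying through that inequality while tightly tracking how the subgamma parameters of the score depend on $\Sigma$, $R$ and $\I_R$ is the technical heart of the proof.
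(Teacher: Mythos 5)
Your plan analyzes a different estimator than the one the theorem is about, and the two places you treat as routine are exactly where the paper has to work hardest. The paper's $\wh\lambda$ is \emph{not} the root of the empirical smoothed score: it is a single Newton-type correction $\wh\lambda=\lambda_1-\frac1n\sum_i \I_R^{-1}s_R(x_i'-\lambda_1)$ from an initial estimate $\lambda_1$ computed on fresh samples via a heavy-tailed subgaussian mean estimator (\Cref{thm:hopkins_estimator}, \Cref{alg:localMLE_highdimensions}), precisely so that only \emph{pointwise} concentration of the score at $\lambda_1$ is needed. Your step (i) — mean-value expansion around the root plus matrix Bernstein for $-\Grad^2\log f_R(x_i-\theta)$ uniformly over a ball — runs into a concrete obstruction: since $\Grad^2\log f_R(x)=R^{-1}\Cov(Z_R\mid x)R^{-1}-R^{-1}$, the summands obey only the one-sided bound $-\Grad^2\log f_R\preceq R^{-1}$ (\Cref{lem:score_jacobian}); for multimodal $f$ the conditional covariance, and hence the summand and the third derivatives of $\log f_R$, are unbounded, so matrix Bernstein and your ``$\poly(1/r)$ third-derivative'' net argument do not apply without extra truncation, and the uniform-convergence route is exactly what the paper identifies as the source of the losses in \citet{Gupta:2022}. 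Your localization fallback also does not suffice: under a covariance assumption alone the sample mean only gives Chebyshev-type $1/\sqrt{\delta}$ tails, not the $\log(1/\delta)$ dependence in the sample complexity, which is why the paper uses \Cref{thm:hopkins_estimator}.

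The more serious gap is in step (ii). You assert that the pointwise bound $\|\Grad\log f_R(x)\|\lesssim\|x\|/r^2$ ``together with the $\Sigma$-bound on the original distribution'' makes the one-dimensional projections of the whitened score subgamma with scale $\sqrt{\|\Sigma\|\,\|\I_R^{-1}\|}/r^2$. But $f$ is arbitrary with only a finite covariance, so a second-moment bound on $x$ cannot give any MGF/subgamma control by this route (and the pointwise bound itself is not correct in general, since $s_R(x)=R^{-1}\bigl(x-\E[Y\mid x]\bigr)$ involves the conditional mean of $Y$, not just $\|x\|$). The fact that the smoothed score is subgamma with scale $O(\|\I_R^{-1}R^{-1/2}\|)$ — independent of $\Sigma$ — is a structural consequence of Gaussian smoothing and is the technical core of the paper: \Cref{lem:score_subgamma} for the unshifted score and, because the algorithm only evaluates the score at $\lambda_1=\lambda+\eps$, the shifted versions \Cref{lem:shifted_score_subgamma} and \Cref{cor:shifted_score_subgamma} together with the variance comparison \Cref{lem:shifted_covariance} and the bias control \Cref{lem:expected_shifted_score}. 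Your proposal correctly identifies that the new subgamma norm concentration (\Cref{thm:subgamma_norm_concentration}) delivers the trace-plus-$\log(1/\delta)$ split, but it assumes the subgamma input to that theorem rather than proving it, so the argument as written does not go through.
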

When
$d_{\eff}(\I_R^{-1}) \gg \log \frac{1}{\delta}$, the bound is $(1 + \eta + o(1)) \sqrt{\Tr(\I_R^{-1})}$.
This is very close to the Cramer-Rao bound for the expected error of $\sqrt{\Tr(\I^{-1})}$ for unbiased estimators~(\citet{Bickel:2015}, Theorem 3.4.3).

The formal version of this theorem, Theorem~\ref{thm:globalmle_highdim}, also gives bounds for general distances $\norm{\wh{\lambda} - \lambda}_M$ induced by symmetric PSD matrices $M$; the exact bound, and the $n$ required for convergence, depend on $M$.



One key piece of our proof, which may be of independent interest, is a
concentration bound for the norm of a high-dimensional vector $x$ with
subgamma marginals in every direction.
If a vector is Gaussian in every direction, it is a high-dimensional Gaussian and satisfies the tail bound~\eqref{eq:gaussian-tail}
(replacing $\I^{-1}$ by the covariance matrix $\Sigma$).
It was shown in \cite{Hsu:2012} that the same bound applies even if the marginals are merely \emph{subgaussian} with parameter $\Sigma$.
We extend this to get a bound for \emph{subgamma} marginals:

\begin{theorem}[Norm concentration for subgamma random vectors; see Theorem~\ref{thm:subgamma_norm_concentration}]
  \label{thm:subgamma_norm_concentration_short}
  Let $x$ be a mean-zero random vector in $\R^d$ that is
  $(\Sigma, C)$-subgamma, i.e., 
  it satisfies that for any vector $v \in \R^d$,
  $$\E[e^{\lam \inner{x, v}}] \le e^{\lam^2 v^T \Sigma v/2}$$
  for $|\lam| \le \frac{1}{\|C v\|}$. Then with probability $1-\delta$,
  \begin{align*}
    \|x\| &\leq \sqrt{\Tr(\Sigma)} + 4\sqrt{\norm{\Sigma} \log \frac{2}{\delta}} + 16 \norm{C} \log \frac{2}{\delta}\\
    & + \min\left(4\norm{C}_F\sqrt{\log \frac{2}{\delta}}, 8\frac{\norm{C}_F^2}{\sqrt{\Tr(\Sigma)}} \log \frac{1}{\delta}\right)
  \end{align*}
\end{theorem}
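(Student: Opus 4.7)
The plan is to adapt the Gaussian decoupling argument of Hsu, Kakade, and Zhang for subgaussian norm concentration to the subgamma setting. The starting identity is, for $\alpha > 0$,
\[
\E_x\!\left[e^{\alpha\|x\|^2/2}\right] = \E_g\!\left[\E_x\!\left[e^{\sqrt{\alpha}\langle x, g\rangle}\right]\right],
\]
where $g \sim \Normal(0, I_d)$ is an independent standard Gaussian. The subgamma hypothesis applied with $v = g$ and $\lambda = \sqrt{\alpha}$ yields $\E_x[e^{\sqrt{\alpha}\langle x, g\rangle}] \le e^{\alpha g^T \Sigma g / 2}$, but only on the event $\{\sqrt{\alpha}\|Cg\| \le 1\}$. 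When $C = 0$ this recovers the subgaussian argument unchanged; the new difficulty is the restriction to this event.

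To control which $g$ are typical, I would use Gaussian Lipschitz concentration: since $g \mapsto \|Cg\|$ is $\|C\|$-Lipschitz with mean at most $\|C\|_F$, we have $\Pr_g[\|Cg\| > \|C\|_F + \|C\|\sqrt{2s}] \le e^{-s}$ for any $s > 0$. Setting $s = \log(2/\delta)$ and $T = \|C\|_F + \|C\|\sqrt{2\log(2/\delta)}$, I would restrict the Chernoff optimization to $\alpha \in (0, \min(1/T^2, 1/\|\Sigma\|))$, so that the subgamma MGF bound applies on the high-probability event $E = \{\|Cg\| \le T\}$. The rest of the argument then follows Hsu--Kakade--Zhang: bound $\E_g[e^{\alpha g^T \Sigma g/2}\mathbf{1}_E] \le \prod_i (1 - \alpha\sigma_i)^{-1/2}$ with $\sigma_i$ the eigenvalues of $\Sigma$, expand $-\log(1-u) \le u + u^2/(2(1-u))$, and optimize $\alpha$ in the allowed range. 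The new constraint $\alpha \le 1/T^2$, absent in the subgaussian case, forces an extra floor $t \gtrsim T^2 \log(2/\delta) \sim \|C\|_F^2\log(2/\delta) + \|C\|^2\log^2(2/\delta)$ in the tail of $\|x\|^2$; upon taking square roots this gives exactly the $\|C\|_F\sqrt{\log(2/\delta)}$ and $\|C\|\log(2/\delta)$ corrections in the theorem, on top of the Hsu--Kakade--Zhang bound $\sqrt{\Tr(\Sigma)} + O(\sqrt{\|\Sigma\|\log(2/\delta)})$.

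The second form in the min, $8\|C\|_F^2/\sqrt{\Tr(\Sigma)}\log(1/\delta)$, comes from a sharper conversion of the $\|x\|^2$ bound to a $\|x\|$ bound: when the subgamma correction $\epsilon = \|C\|_F^2\log(1/\delta)$ is small compared to $\Tr(\Sigma)$, the estimate $\sqrt{\Tr(\Sigma) + \epsilon} \le \sqrt{\Tr(\Sigma)} + \epsilon/(2\sqrt{\Tr(\Sigma)})$ beats the triangle-inequality-style $\sqrt{\Tr(\Sigma)} + \sqrt{\epsilon}$; taking the smaller of the two conversions yields the min.

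The main obstacle is the rigorous handling of the bad event $E^c = \{\|Cg\| > T\}$, where the subgamma MGF is not a priori finite and the decoupling identity could blow up. I would sidestep this by applying the MGF argument to the truncated quantity $\E[e^{\alpha\|x\|^2/2}\mathbf{1}_{\|x\| \le R}]$; on $E^c$ the inner expectation admits the crude bound $\E_x[e^{\sqrt{\alpha}\langle x, g\rangle}\mathbf{1}_{\|x\|\le R}] \le e^{\sqrt{\alpha}R\|g\|}$, which contributes only a lower-order term via Cauchy--Schwarz with the $e^{-s}$ probability of $E^c$. The separate tail $\Pr[\|x\| > R]$ is handled by a coarse subgamma-tail union bound over an $\epsilon$-net of the unit sphere, with $R$ chosen large enough that this piece is negligible at the scale of the final bound; the resulting looseness in $R$ does not affect the final rate because the dominant contribution already comes from the good event $E$.
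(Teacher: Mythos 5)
Your overall skeleton---the Gaussian decoupling identity $\E_x[e^{\alpha\|x\|^2/2}]=\E_g[\E_x[e^{\sqrt{\alpha}\langle x,g\rangle}]]$, a split according to whether $\|Cg\|$ exceeds the validity threshold of the subgamma MGF, a restricted Chernoff optimization, and the two different $\|x\|^2\to\|x\|$ conversions producing the min term---is the same as the paper's. The genuine gap is in your treatment of the bad event. You fix a $\delta$-dependent threshold $T=\|C\|_F+\|C\|\sqrt{2\log\frac2\delta}$, so the bad event $\{\|Cg\|>T\}$ has probability about $\delta/2$, and there you bound the inner expectation by $e^{\sqrt{\alpha}R\|g\|}$ after truncating $x$ at a radius $R$ chosen so that $\Pr[\|x\|>R]$ is negligible. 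But any $R$ obtained from your net/union-bound argument must scale like $\sqrt{\Tr(\Sigma)}+\sqrt{\|\Sigma\|\,(d+\log\frac1\delta)}+\|C\|(d+\log\frac1\delta)$ (in particular it must exceed the very threshold you are trying to certify, since $\Pr[\|x\|>\sqrt{\Tr(\Sigma)}+t]$ is not negligible at scale $\delta$). Cauchy--Schwarz then gives a bad-event contribution of order $e^{\sqrt{\alpha}R\sqrt{d}+\alpha R^2}\sqrt{\delta/2}$, and at the relevant $\alpha$ (e.g.\ $\alpha\asymp\min(1/\|\Sigma\|,1/T^2)$ in the spherical case $\Sigma=\sigma^2 I$, $C=cI$) the exponent is $\Omega(d)$, while the only smallness available is the fixed factor $\sqrt{\delta/2}$. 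So the bad-event term swamps the good-event term $e^{O(\alpha\Tr(\Sigma))}$ unless $\delta\le e^{-\Omega(d)}$; the high-dimensional, constant-$\delta$ regime that the theorem is designed for is exactly where this step fails.

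The paper closes this hole with two devices your outline is missing. First, instead of truncating at a large $R$, it clips $x$ at exactly the target threshold $(1+\gamma)\sqrt{\Tr(\Sigma)}=\sqrt{\Tr(\Sigma)}+t$ and multiplies by an independent random sign; a short $\cosh$ computation shows the clipped, symmetrized vector is still $(\Sigma,C)$-subgamma and has the same exceedance probability at that threshold, so one may assume $\|x\|\le(1+\gamma)\sqrt{\Tr(\Sigma)}$ WLOG. Then the crude bound on the bad event becomes $\sqrt{\E_v[e^{2\lambda\langle x,v\rangle}]\Pr[\|Cv\|>1/|\lambda|]}=e^{\lambda^2(1+\gamma)^2\Tr(\Sigma)-\frac{1}{16\lambda^2\|C\|^2}}$, i.e.\ the crude factor lives at the same scale as the Markov denominator rather than at $e^{\Omega(\alpha R^2)}$. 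Second, the bad event is taken to be $\{\|Cv\|>1/|\lambda|\}$ itself, whose probability $e^{-\Omega(1/(\lambda^2\|C\|^2))}$ is not pinned at $\delta/2$ but can be driven below the crude factor by the additional constraint $\lambda^2\le\frac{1}{4(1+\gamma)\sqrt{\Tr(\Sigma)}\,\|C\|}$; this tunability is what ultimately produces the $16\|C\|\log\frac2\delta$ term. If you replace your fixed-$T$ truncation with this clip-and-symmetrize reduction and the $\lambda$-dependent bad event, the remainder of your outline (the subgamma bound on the generalized chi-square $g^\top\Sigma g$, the restricted optimization over $\lambda$, and the two conversions giving the min) goes through and essentially coincides with the paper's proof.
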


The first, trace term is the expected norm and the next two
terms are (up to constants) the tight bound from 1-dimensional subgamma concentration.
When $x$ is an average of $n$ samples, both $\Sigma$ and $C$ drop by a factor $n$; thus, the terms involving $C$ decay at a rate of $1/n$, versus the terms involving only $\Sigma$, which decay at a rate of $1/\sqrt{n}$.
As $n \to \infty$, the terms involving $C$ disappear compared with the Gaussian terms involving $\Sigma$.

To better understand the last term, consider $x$ to be the average of $n$ samples $X_i$ drawn from the spherical case ($\Sigma = \sigma^2 I, C = cI$).
We also focus on the high-dimensional regime where $d \ge
(2/\eta^2) \log(1/\delta)$ for some small $\eta$, where the target error bound of (\ref{eq:gaussian-tail}) becomes $(1+\eta)\sqrt{\tr(\Sigma)/n}$, that is, within a $(1+\eta)$ factor of the expected $\ell_2$ norm error.
In the subgamma setting, the bound of Theorem~\ref{thm:subgamma_norm_concentration_short} implies an error of $(1 +O(\eta))\sqrt{\Tr(\Sigma)/n}$ whenever $n \gtrsim (c/\sigma)^2 d$, where the threshold for $n$ is due to comparing the last ``min" term in the bound with the $\sqrt{\|\Sigma\|\log\frac{2}{\delta}}$ term.

Under the stronger assumption that the random vectors have distance at most $c$ from their expectation, one can compare our tail bound with Talagrand's/Bousquet's suprema concentration inequality (\citet{boucheron}, Theorem 12.5).
Focusing again on the high-dimensional, spherical regime where $d \ge (2/\eta^2) \log(1/\delta)$ and $\Sigma = \sigma^2 I, C = cI$, Bousquet's inequality implies an almost-identical $\ell_2$ error of $(1 +O(\eta))\sqrt{\Tr(\Sigma)/n}$ whenever $n \gtrsim (c/\sigma)^2 d$, albeit with smaller hidden constant.
Given that the $n$ threshold for our bound is due to our last ``min" term, 
it is likely that such a term is qualitatively necessary, and that our last term is not too large at least in the relevant regimes we consider in this paper.

\subsection{Notation}
We denote the known distribution by $f$.
In 1 dimension, $f_r$ is the $r$-smoothed distribution $f \ast \Normal(0,r^2)$, with smoothed Fisher information $\I_r$.
In high dimensions, $f_R$ is the $R$-smoothed distribution $f \ast \Normal(0, R)$ with smoothed Fisher information $\I_R$---note the quadratic difference between $r$ and $R$, analogous to the usual conventions for the (co)variance of 1-dimensional vs high-dimensional Gaussians.

The true parameter is denoted by $\lambda$.
Both our 1-dimensional and high-dimensional algorithms first gets an initial estimate $\lambda_1$, before refining it into the final estimate $\lambdahat$.

Unless otherwise specified, for a given vector $x$, $\|x\|$ denotes the $\ell_2$ norm, and similarly $\|A\|$ is the operator norm of a square matrix $A$.
Given a square positive semidefinite matrix $A$, we define its \emph{effective dimension} to be $\deff(A) = \tr(A)/\|A\|$.
The effective dimension of a matrix $A$ is $d$ when it is spherical, but decays if one or more of its eigenvalues deviate from the maximum eigenvalue.

\section{Related work}

For an in-depth textbook treatment of the asymptotic theory of location estimation and parametric estimation in general, see~\cite{vanDerVaart:2000asymptotic}.
There have also been finite-sample analysis of the MLE (\cite{Spokoiny:2011} in high dimensions,~\cite{Pinelis:2017,Miao:2010} in 1 dimension), but they require strong regularity conditions in addition to losing (at least) multiplicative constants in the estimation error bounds.
Most related to this paper is the prior work of Gupta et al.~\shortcite{Gupta:2022}, which introduced smoothed MLE in the context of location estimation in 1 dimension, as well as formally analyzed its finite sample performance in terms of the smoothed Fisher information for large $n$ and small $\delta$.

There has been a flurry of work in recent years on the closely related problem of mean estimation, under the minimal assumption of finite (co)variance.  The bounds then depend on this variance, rather than the Fisher information.  
In 1 dimension, the seminal paper of Catoni~\citeyearpar{Catoni:2012} initiated the search for a subgaussian mean estimator with estimation error tight to within a $1+o(1)$ factor; improvements by Devroye et al.~\citeyearpar{Devroye:2016} and Lee and Valiant~\citeyearpar{Lee:2021} have given a 1-dimensional mean estimator that works for all distributions with finite (but unknown) variance, with accuracy that is optimal to within a $1+o(1)$ factor.
Crucially, the $o(1)$ term is independent of the underlying distribution.

It remains an open problem to find a subgaussian mean estimator with tight constants under bounded covariance in high dimensions. A line of work \cite{lugosi_mendelson, Hopkins2018SubGaussianME,pmlr-v99-cherapanamjeri19b} has shown how to achieve the subgaussian rate, ignoring constants, in polynomial time. More recently, Lee and Valiant~\shortcite{Lee:2022} has achieved linear time and a sharp constant, but requires the effective dimension of the distribution to be much larger than $\log^2\frac{1}{\delta}$.

Our other contribution is our novel norm concentration bound for subgamma random vectors.
The norm concentration for Gaussian vectors has long been understood, see for example the textbook (\citet{boucheron}, Example 5.7).
Hsu et al.~\citeyearpar{Hsu:2012} generalized this bound to the case of direction-by-direction subgaussian vectors.
Norm concentration can also be viewed as the supremum of an empirical process.
Bousquet's version~\citeyearpar{Bousquet:2002,Bousquet:2003} of Talagrand's suprema concentration inequality implies a norm concentration bound for random vectors bounded within an $\ell_2$ ball of their expectation.
Our bound generalizes this case of Bousquet's inequality from bounded vectors to all subgamma vectors.  As discussed after Theorem~\ref{thm:subgamma_norm_concentration_short}, the results are quite similar for spherical $\Sigma$ and $C$.





\section{1-dimensional location estimation}

We discuss our 1-dimensional location estimation algorithm and its analysis at a high level in this section.
See Appendix~\ref{app:1-d} for the complete analysis.

Algorithm~\ref{alg:1-dLocalMLE} below is a \emph{local} algorithm in the sense that it assumes we have an initial estimate $\lambda_1$ that is within some distance $\eps$ of $\lambda$, with the goal of refining the estimate to high accuracy.

\begin{algorithm}\caption{Local smoothed MLE for one dimension}
\label{alg:1-dLocalMLE}
\vspace*{3mm}
\paragraph{Input Parameters:}
\begin{itemize}
\item Description of $f$, smoothing parameter $r$, samples $x_1, \ldots, x_n \overset{i.i.d.}{\sim} f^{\lambda}$ and initial estimate $\lambda_1$ of $\lambda$
\end{itemize}
\begin{enumerate}
\item Let $s(\lambdahat)$ be the score function of $f_r$, the $r$-smoothed version of $f$.
\item For each sample $x_i$, compute a perturbed sample $x'_i = x_i + \Normal(0,r^2)$ where all the Gaussian noise are drawn independently across all the samples.
\item Compute the empirical score at $\lambda_1$, namely $\hat{s}(\lambda_1) = \frac{1}{n}\sum_{i = 1}^n s(x'_i-\lambda_1)$.
\item Return $\lambdahat = \lambda_1 - (\hat{s}(\lambda_1)/\I_r)$.
\end{enumerate}
\end{algorithm}

Let $\I_r$ be the Fisher information of $f_r$, the $r$-smoothed
version of $f$.
Basic facts about the score $s(x)$ are:
\begin{align*}
  0 &= \E_{x \sim f_r}[s(x)]\\
  \I_r &= \E_{x \sim f_r}[-s'(x)] =  \E_{x \sim f_r}[s(x)^2].
\end{align*}

First, Algorithm~\ref{alg:1-dLocalMLE} adds $N(0, r^2)$ perturbation independently to each $x_i$ to get $x'_i$, which are drawn as $(y_1 + \lambda, y_2 + \lambda, \dotsc, y_n + \lambda)$ for $y_i \sim f_r$. 
It then computes
\[
  \wh{s}(\lambda_1) := \frac{1}{n}\sum_{i=1}^n s(x'_i - \lambda_1) =     \frac{1}{n}\sum_{i=1}^ns(y_i - \eps)
\]
which is, in expectation,
\[
  \E_{x \sim f_r}[s(x - \eps)] \approx \E_{x \sim f_r}[s(x) - \eps s'(x)] = \eps \I_r.
\]
Thus we expect $\wh{\lambda} = \lambda_1 - \wh{s}(\lambda_1) /\I_r \approx \lambda$.

There are two sources of error in this calculation: (I) the Taylor
approximation to $s(x-\eps)$, and (II) the difference between the
empirical and true expectations of $s(x-\eps)$.  When $\eps = 0$, the Taylor error is $0$ and the empirical estimator has variance
\[
\frac{\Var(s(x))}{n} = \frac{\I_r}{n}.
\]
Thus, when $\lambda_1 = \lambda$, $\wh{\lambda}$ would be an unbiased estimator of $\lambda$ with variance $\frac{1}{n\I_r}$: exactly the Cram\'er-Rao bound.
Moreover, one can show that $s(x)$ is subgamma with variance proxy $\I_r$ and tail parameter $1/r$, giving tails on $\wh{\lambda}-\lambda$ matching the $\frac{1}{n\I_r}$-variance Gaussian (up to some point depending on $r$).
All we need to show, then, is that shifting by $\eps$ introduces little excess error in (I) and (II); intuitively, this happens for $\abs{\eps} \ll r$ because $f_r$ has been smoothed by radius $r$.

In fact,~\cite{Gupta:2022} \emph{already} bounded both errors: for (I), their Lemma C.2 shows that
\begin{align}\label{eq:staylor}
  \E_{x \sim f_r}[s(x - \eps)] = \I_r \eps  \pm O(\sqrt{\I_r} \frac{\eps^2}{r^2})
\end{align}
for all $\abs{\eps} \leq r/2$, and for (II), their Corollary 3.3 and Lemma C.3 together imply that a subgamma concentration of
\begin{align}
  &|\hat{s}(\lambda_1) - \E_{x\sim f_r}[s(x-\eps)]| \lesssim\notag\\
  &\qquad (1+o(1))\sqrt{\frac{\I_r\log\frac{2}{\delta}}{n}} + \frac{\log\frac{2}{\delta}}{nr}\label{eq:serror}
\end{align}
when $r \gg |\eps|$.

Therefore, for sufficiently large $r$,
the total error in $\wh{s}(\lambda_1)$ is dominated by the leading $\sqrt{\frac{\I_r \log \frac{2}{\delta}}{n}}$ term, giving a result within $1 + o(1)$ of optimal.

\paragraph{Getting an initial estimate.}
We estimate $\lambda$ by the empirical $\alpha$-quantile of a small $\kappa$ fraction of the samples, for some $\alpha$; one can show that this has error at most $O(\IQR \cdot \sqrt{\frac{\log \frac{1}{\delta}}{\kappa n}})$ with $1-\delta$ probability, where $\IQR$ denotes the interquartile range.
This strategy is essentially identical to \citep{Gupta:2022}, except we use fresh samples for the two stages while they reuse samples.


\begin{algorithm}
\caption{Global smoothed MLE for one dimension}
\label{alg:1-dGlobalMLE}
\vspace*{3mm}
\paragraph{Input Parameters:}
\begin{itemize}
\item Failure probability $\delta$, description of $f$, $n$ i.i.d.~samples drawn from $f^\lambda$ for some unknown $\lambda$
\end{itemize}
\begin{enumerate}
\item Let $q$ be $\sqrt{2}(\log\frac{2}{\delta}/n)^{2/5}$.
\item Compute an $\alpha \in [q, 1-q]$ to minimize the width of interval defined by the $\alpha\pm q$ quantiles of $f$.

\item Take the sample $\alpha$-quantile of the first $(\log\frac{1}{\delta}/n)^{1/10}$ fraction of the $n$ samples.

\item Let $r^* = \Omega((\frac{\log \frac{1}{\delta}}{n})^{1/8}) \IQR$.
\item Run Algorithm~\ref{alg:1-dLocalMLE} on the rest of the samples, using initial estimate $\lambda_1 = x_\alpha$ and $r^*$-smoothing, and return the final estimate $\lambdahat$.
\end{enumerate}
\end{algorithm}

Combining the above strategies and balancing the parameters gives Algorithm~\ref{alg:1-dGlobalMLE} as our final algorithm.
We prove in Appendix~\ref{app:1-d} that the algorithm satisfies our 1-dimensional result, Theorem~\ref{thm:1-dGlobalMLE}.

\paragraph{Comparison to prior work.}
All the properties of the score function we need for this 1-dimensional result were shown in \cite{Gupta:2022}, but that paper uses a different algorithm for which they could only prove a worse result.  The \cite{Gupta:2022} algorithm looks for a root of $\wh{s}$, while we essentially perform one step of Newton's method to approximate the root.  General root finding requires \emph{uniform} convergence of $\wh{s}$, which \cite{Gupta:2022} could not prove without additional loss factors.  By using one step, and (a small number of) fresh samples for the initial estimate, our algorithm only needs pointwise convergence.


\section{High-dimensional location estimation}
The high-dimensional case is conceptually analogous to the $1$-d case. The complete analysis can be found in Appendix~\ref{appendix:high_dim}. The main differences are: 1) The initial estimate comes from a heavy-tailed subgaussian estimator, and 2) We bound the difference between our estimate and the true mean using our concentration inequality for the norm of a subgamma vector (Theorem~\ref{thm:subgamma_norm_concentration}).

Let $\lam$ be the true location, and $\wh \lam$ our final estimate. We first state our main theorem, which gives a bound on $\|\wh \lam - \lam\|_M$, induced by symmetric PSD matrices $M$.

\begin{theorem}[High-dimensional MLE, Informal; see Theorem~\ref{thm:globalmle_highdim}]\label{thm:globalmle_highdim_informal}
Let $f$ have covariance matrix $\Sigma$.  For any $r^2 \leq \norm{\Sigma}$, let $R = r^2 I_d$ and $\I_R$ be the $R$-smoothed Fisher information of the distribution. Let $M$ be any symmetric PSD matrix, and let $T = M^{1/2} \I_R^{-1} M^{1/2}$.
For any constant $0 < \eta < 1$, 
\[
\norm{\wh{\lambda}-\lambda}_M \leq (1 + \eta) \sqrt{\frac{\Tr(T)}{n}} + 5 \sqrt{\frac{\norm{T} \log \frac{4}{\delta}}{n}}
\]
with probability $1-\delta$, for
\[n > O_{\eta}\left( \left(\frac{\norm{\Sigma}}{r^2}\right)^2\left(\log \frac{2}{\delta} + d_{\eff}(T) + \frac{d_{\eff}(\Sigma)^2}{d_{\eff}(T)}\right)\right)\]
\end{theorem}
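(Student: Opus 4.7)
The plan is to mirror the 1-dimensional template. I would first obtain an initial estimate $\lambda_1$ with $\|\lambda_1 - \lambda\| = O(r)$ using a black-box subgaussian mean estimator (e.g.~\cite{lugosi_mendelson}) on a small fresh fraction of the samples, and then perform a single ``Newton step'' using the smoothed score $s = \nabla \log f_R$:
\begin{align*}
  \hat\lambda = \lambda_1 - \I_R^{-1}\hat s(\lambda_1), \qquad \hat s(\lambda_1) = \tfrac{1}{n}\sum_{i=1}^n s(x'_i - \lambda_1),
\end{align*}
where $x'_i = x_i + \mathcal{N}(0, R)$ are independently smoothed samples. Setting $\eps = \lambda - \lambda_1$ and using $\E_{x \sim f_R}[s(x)] = 0$ together with $\E_{x \sim f_R}[\nabla s(x)] = -\I_R$, I would write
\begin{align*}
  \hat\lambda - \lambda &= -\I_R^{-1}\bigl(\hat s(\lambda_1) - \E\hat s(\lambda_1)\bigr) \\
  &\qquad{}-\I_R^{-1}\bigl(\E\hat s(\lambda_1) + \I_R \eps\bigr),
\end{align*}
splitting $\|\hat\lambda - \lambda\|_M$ into a mean-zero ``noise'' piece and a deterministic ``Taylor-bias'' piece after multiplication by $M^{1/2}$.

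The crux of the noise analysis is a directional subgamma claim for the smoothed score: for every $v \in \R^d$, $\inner{v, s(x)}$ with $x \sim f_R$ is 1-d subgamma with variance proxy $v^\top \I_R v$ and tail parameter $\|v\|/r$. This is the high-dimensional lift of the 1-d score tail bound of~\cite{Gupta:2022}, relying on the fact that convolution with the spherical kernel $\mathcal{N}(0, r^2 I_d)$ interacts cleanly with every projection. Consequently $M^{1/2}\I_R^{-1}(\hat s(\lambda_1) - \E\hat s(\lambda_1))$ is a mean-zero $(T/n, C)$-subgamma vector in the sense of Theorem~\ref{thm:subgamma_norm_concentration_short}, with $T = M^{1/2}\I_R^{-1}M^{1/2}$ and $\|C\|$, $\|C\|_F$ of order $\sqrt{\|\I_R^{-1}\|\|T\|}/(rn)$ and $\sqrt{\|\I_R^{-1}\|\Tr(T)}/(rn)$ respectively. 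Plugging into Theorem~\ref{thm:subgamma_norm_concentration_short} produces the advertised $(1+\eta)\sqrt{\Tr(T)/n} + 5\sqrt{\|T\|\log(4/\delta)/n}$ main terms, with the $\|C\|$- and $\|C\|_F$-dependent correction terms absorbed into the $\eta$ slack under the stated lower bound on $n$ (using the hypothesis $r^2 \le \|\Sigma\|$ and $\|\I_R^{-1}\| \le \|\Sigma\| + r^2$ to control condition-number factors).

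For the Taylor bias, I would establish a high-dimensional analog of~\eqref{eq:staylor}: for $\|\eps\| \le r/2$,
\begin{align*}
  \|\E_{y \sim f_R}[s(y + \eps)] + \I_R \eps\| = O\bigl(\sqrt{\|\I_R\|}\,\|\eps\|^2/r^2\bigr),
\end{align*}
exploiting that the third derivative of $\log f_R$ is controlled by $1/r^2$ thanks to the Gaussian smoothing. The subgaussian initial estimator yields $\|\eps\| \lesssim \sqrt{\Tr(\Sigma)/n} + \sqrt{\|\Sigma\|\log(1/\delta)/n}$; propagating this bound through $M^{1/2}\I_R^{-1}$ and demanding that the resulting bias be dominated by the noise term gives $n \gtrsim_\eta (\|\Sigma\|/r^2)^2 \, d_\eff(\Sigma)^2 / d_\eff(T)$, which is exactly the last summand in the sample-complexity lower bound.

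The main obstacle will be the directional subgamma property of the smoothed score, uniformly over all directions $v$: the 1-d argument of~\cite{Gupta:2022} must be extended to arbitrary projections of $\nabla \log f_R$ (not just coordinate-aligned ones), exploiting the rotational symmetry of the smoothing kernel $\mathcal{N}(0, r^2 I_d)$, and with the correct $v$-dependence so that Theorem~\ref{thm:subgamma_norm_concentration_short} applies verbatim. Once this is in place, the rest of the proof is bookkeeping: combining the noise bound from Theorem~\ref{thm:subgamma_norm_concentration_short}, the Taylor bias bound, and the initial-estimate error, and balancing parameters so that each summand in the lower bound on $n$ dominates its respective error contribution.
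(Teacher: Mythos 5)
Your overall route is the same as the paper's: a black-box subgaussian initial estimate on a fresh fraction of samples, one Newton step $\wh\lambda = \lambda_1 - \I_R^{-1}\hat s(\lambda_1)$ with the smoothed score, a bias/noise decomposition, and the subgamma norm-concentration theorem applied to the inverted empirical score, with the $C$-terms absorbed using $\norm{\I_R^{-1}} \le \norm{\Sigma} + r^2$; your bookkeeping for $\norm{C}$ and $\norm{C}_F$ matches the paper's. Two points, however, need attention. First, the subgamma claim you state is for the \emph{unshifted} score $\inner{v, s(x)}$, $x \sim f_R$, but the noise term is an average of the \emph{shifted} scores $s(y_i - \eps)$ centered at their own mean, so what is actually needed is that $v^\top M^{1/2}\I_R^{-1}s_R(x+\eps)$ is subgamma with variance proxy only $(1+o(1))\,v^\top T v$. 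Establishing this is where the bulk of the paper's technical work lies (a lower bound $\J_{s_R} \succcurlyeq -R^{-1}$ on the score Jacobian, a moment-doubling argument for the shifted score, and a separate lemma showing the shifted score's covariance is within a factor $1 + O\bigl(\sqrt{\eps^\top R^{-1}\eps\,\log(\norm{\I_R^{-1}}/r^2)}\bigr)$ of $\I_R$, which in turn forces an extra constraint on $\eps^\top R^{-1}\eps$ in the final parameter balancing). You flag this as the main obstacle, which is fair, but as stated your claim skips the shift and the $(1+o(1))$ variance inflation entirely.

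Second, your Taylor-bias route is quantitatively lossy in a way that breaks the stated sample-complexity threshold. You propose to bound $\norm{\E[s(y+\eps)] + \I_R\eps} = O\bigl(\sqrt{\norm{\I_R}}\,\norm{\eps}^2/r^2\bigr)$ and then propagate through $M^{1/2}\I_R^{-1}$; this costs a factor $\norm{M^{1/2}\I_R^{-1}}\sqrt{\norm{\I_R}} \le \sqrt{\norm{T}}\sqrt{\norm{\I_R}\norm{\I_R^{-1}}}$, i.e.\ an extra $\sqrt{\norm{\I_R}\norm{\I_R^{-1}}} \lesssim \sqrt{\norm{\Sigma}}/r$ beyond the paper's bound. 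The paper instead applies Cauchy--Schwarz \emph{after} multiplying by $M^{1/2}\I_R^{-1}$ (its score-inversion lemma), using $\E[s_Rs_R^\top] = \I_R$ to cancel one inverse and obtain bias $\lesssim \sqrt{\norm{T}}\,(\eps^\top R^{-1}\eps)$ with no condition-number factor. With your version, demanding the bias be dominated by the noise yields a requirement like $n \gtrsim_\eta (\norm{\Sigma}/r^2)^3(d_{\eff}(\Sigma)+\log\frac{1}{\delta})^2/d_{\eff}(T)$ rather than the claimed $(\norm{\Sigma}/r^2)^2$ threshold, so the theorem as stated would not follow; the fix is simply to perform the Cauchy--Schwarz step in the paper's order.
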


As a Corollary, we obtain Theorem~\ref{thm:high_dimensional_l2_informal} which bounds $\|\wh \lam - \lam \|_2$, as well as the following, which bounds the Mahalanobis distance $\|\wh \lam - \lam\|_{\I_R}$.

\begin{corollary}
Let $f$ have covariance matrix $\Sigma$.  For any $r^2 \leq \norm{\Sigma}$, let $R = r^2 I_d$ and $\I_R$ be the $R$-smoothed Fisher information of the distribution. For any constant $0 < \eta < 1$, 
\[
\norm{\wh{\lambda}-\lambda}_{\I_R} \leq (1 + \eta) \sqrt{\frac{d}{n}} + 5 \sqrt{\frac{ \log \frac{4}{\delta}}{n}}
\]
with probability $1-\delta$, for
\[n > O_{\eta}\left( \left(\frac{\norm{\Sigma}}{r^2}\right)^2\left(\log \frac{2}{\delta} + d + \frac{d_{\eff}(\Sigma)^2}{d}\right)\right)\]
\end{corollary}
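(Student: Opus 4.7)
The plan is to apply Theorem~\ref{thm:globalmle_highdim_informal} directly with the specific choice $M = \I_R$. Since the smoothed Fisher information matrix $\I_R$ is symmetric and positive definite (the convolution $f_R = f \ast \Normal(0, R)$ has a smooth positive density everywhere, making its Fisher information matrix strictly positive definite and hence invertible), this is a valid instantiation of the theorem.

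With this choice, the matrix $T = M^{1/2} \I_R^{-1} M^{1/2}$ from the theorem simplifies dramatically. Taking $M^{1/2} = \I_R^{1/2}$, we get
\[ T = \I_R^{1/2} \I_R^{-1} \I_R^{1/2} = I_d. \]
Consequently $\Tr(T) = d$, $\|T\| = 1$, and $d_\eff(T) = \Tr(T)/\|T\| = d$.

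Substituting these quantities into the guarantee of Theorem~\ref{thm:globalmle_highdim_informal} immediately yields both the claimed error bound
\[ \norm{\wh\lam - \lam}_{\I_R} \le (1+\eta)\sqrt{\frac{d}{n}} + 5\sqrt{\frac{\log\frac{4}{\delta}}{n}} \]
and the required sample size
\[ n > O_\eta\!\left(\left(\frac{\norm{\Sigma}}{r^2}\right)^2\!\left(\log\frac{2}{\delta} + d + \frac{d_\eff(\Sigma)^2}{d}\right)\right). \]

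There is no real obstacle: the corollary is a one-line substitution into the main theorem. The only thing to note is the conceptual point being emphasized, namely that measuring error in the Mahalanobis distance induced by $\I_R$ itself collapses the ``effective dimension'' term $d_\eff(T)$ exactly to the ambient dimension $d$, giving a clean $\chi^2$-style Gaussian tail bound analogous to the Cram\'er--Rao prediction $\Normal(\lam, \frac{1}{n}\I_R^{-1})$ measured in its natural metric.
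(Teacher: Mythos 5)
Your proposal is correct and matches the paper's (implicit) derivation: the corollary is obtained by instantiating the general $M$-norm theorem with $M = \I_R$, so that $T = \I_R^{1/2}\I_R^{-1}\I_R^{1/2} = I_d$, giving $\Tr(T) = d$, $\norm{T} = 1$, $d_\eff(T) = d$ (the formal route in the paper does the same via Theorem~\ref{thm:globalmle_highdim} with $\gamma = C_0/\eta$, exactly as in its $\ell_2$ corollary). No gaps; your remark that $\I_R \succ 0$ justifies the substitution is a fine touch.
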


We now sketch our analysis. Algorithm~\ref{alg:localMLE_highdimensions} below takes an initial estimate $\lam_1$ of the mean, and refines it to a precise estimate $\wh \lam$, analogously to Algorithm~\ref{alg:1-dLocalMLE} for the $1$-d case.

\label{sec:high-d}
\newsavebox\localmlehighd
\begin{lrbox}{\localmlehighd}
\begin{varwidth}{\linewidth}
\begin{algorithm}[H]\caption{High-dimensional Local MLE}
\label{alg:localMLE_highdimensions}
\vspace*{3mm}
\paragraph{Input Parameters:}
\begin{itemize}
    \item 
    Description of distribution $f$ on $\R^d$, smoothing $R$, samples $x_1, \ldots, x_n \overset{i.i.d.}{\sim} f^{\lambda}$, and initial estimate $\lam_1$
\end{itemize}
\begin{enumerate}
\item Let $\I_R$ be the Fisher information matrix of $f_R$, the $R$-smoothed version of $f$. Let $s_R$ be the score function of $f_R$.
\item For each sample $x_i$, compute a perturbed sample $x'_i = x_i + \Normal(0,R)$ where all the Gaussian noise are drawn independently across all the samples.
\item Let $\hat \eps = \frac{1}{n} \sum_{i=1}^n \I_R^{-1} s_R(x_i' - \lam_1)$ and return $\lambdahat = \lam_1 - \hat \eps$.
\end{enumerate}
\end{algorithm}
\end{varwidth}
\end{lrbox}
\usebox{ \localmlehighd}

\newsavebox\globalmlehighd
\begin{lrbox}{\globalmlehighd}
\begin{varwidth}{\linewidth}
\begin{algorithm}[H]
\caption{High-dimensional Global MLE}
\label{alg:globalMLE_highdim}
\vspace*{3mm}
\paragraph{Input Parameters:} 
\begin{itemize}
\item
Failure probability $\delta$, description of distribution $f$, $n$ samples from $f^\lambda$, Smoothing $R$, Approximation parameter $\eta$
\end{itemize}
\begin{enumerate}
\item Let $\Sigma$ be the covariance matrix of $f$. Compute an initial estimate $\lam_1$ using the first $\eta/C$ fraction of of the $n$ samples for large constant $C$, using an estimator from Theorem~\ref{thm:hopkins_estimator}. 


\item Run Algorithm~\ref{alg:localMLE_highdimensions} using the remaining $1 - \eta/C$ fraction of samples using $R$-smoothing and our initial estimate $\lam_1$, returning the final estimate $\lambdahat$.
\end{enumerate}
\end{algorithm} 
\end{varwidth}
\end{lrbox}

Let $f$ be a distribution on $\R^d$, and let $\I_R$ be the Fisher information matrix of $f_R$, the $R$-smoothed version of $f$. Then, for score $s_R$, if $\J_{s_R}$ is the Jacobian of $s_R$,
\[
   \I_R = \E_{x \sim f_R} \left[s_R(x) s_R(x)^T \right] = \E_{x \sim f_R}\left[-\J_{s_R}(x) \right]
\]
Analogously to the $1$-d case, Algorithm~\ref{alg:localMLE_highdimensions} takes an initial estimate $\lam_1 = \lam + \eps$ with $\eps^T R^{-1} \eps \le 1/4$. The algorithm first adds $N(0, R)$ independently to each sample $x_i$, to get $x_i'$ which are drawn as $y_i + \lam$ for $y_i \sim f_R$. Then, it computes
\[
    \hat \eps = \frac{1}{n} \sum_{i=1}^n \I_R^{-1} s_R(x_i' - \lam_1) = \frac{1}{n} \sum_{i=1}^n\I_R^{-1} s_R(y_i - \eps)
\]
which is in expectation
\[
    \E_{x \sim f_R}\left[\I_R^{-1} s_R(x - \eps)\right] \approx \E_{x \sim f_R}\left[-\I_R^{-1} \J_{s_R}(x)\eps \right] = \eps
\]
So, again, we expect $\hat \lam = \lam_1 - \hat \eps \approx \lam$ up to error from (I) the Taylor
approximation to $s_R(x-\eps)$, and (II) the difference between the
empirical and true expectations of $s_R(x-\eps)$.

For (I), Lemma~\ref{lem:expected_shifted_score} shows that
\[
\|\eps - \E_{x \sim f_R}\left[\I_R^{-1} s_R(x - \eps) \right]\|^2 \lesssim \|\I_R^{-1}\|(\eps^T R^{-1} \eps)
\]
for $\eps^T R^{-1} \eps \le 1/4$. For (II), Corollary~\ref{cor:shifted_score_subgamma} shows that for any unit direction $v$, $v^T \I_R^{-1} s_R(x-\eps)$ is subgamma:
\[
v^T \I_R^{-1} s_R(x-\eps) \in \Gamma(\I_R^{-1}(1 + o(1)),\I_R^{-1} R^{-1/2})
\]
when $\eps^T R^{-1} \eps \le 1/4$ and $\sqrt{(\eps^T R^{-1} \eps) \log \left(\|I_R^{-1}\| \|R^{-1}\|\right)} \ll 1$, so that together with our norm concentration inequality for subgamma vectors (Theorem~\ref{thm:subgamma_norm_concentration}), Lemma~\ref{lem:score_inversion_estimation} shows
\begin{align*}
    &\|\hat \eps - \E_{x \sim f_R}\left[\I_R^{-1} s_R(x - \eps) \right]\|\le \\
    &(1 + o(1))\Biggl(\sqrt{\frac{\Tr(\I_R^{-1})}{n}} + 4 \sqrt{\frac{\|\I_R^{-1}\| \log \frac{2}{\delta}}{n}}\\
    &+ 16 \frac{\|\I_R^{-1} R^{-1/2}\| \log \frac{2}{\delta}}{n} + 8 \frac{\norm{\I_R^{-1} R^{-1/2}}_F^2 \log \frac{2}{\delta}}{n^{3/2} \sqrt{\Tr(\I_R^{-1})}}\Biggl)
\end{align*}
For $R = r^2 I_d$, when $r$ is large, the total error is dominated by the first two terms in the above bound, which correspond to subgaussian concentration with covariance $\I_R^{-1}$.

\paragraph{Getting an initial estimate.}
For our initial estimate $\lam_1$, we make use of a heavy-tailed estimator \cite{Hopkins2018SubGaussianME, pmlr-v99-cherapanamjeri19b}, which guarantee subgaussian error dependent on the covariance $\Sigma$ of $f$, up to constants.

As in the $1$-d case, combining our initial estimate with Algorithm~\ref{alg:localMLE_highdimensions} gives our final theorem, Theorem~\ref{thm:globalmle_highdim}. Below, Algorithm~\ref{alg:globalMLE_highdim} shows how to compute our initial estimate and combine it with the local MLE Algorithm~\ref{alg:localMLE_highdimensions} to obtain our final estimate.

\usebox{ \globalmlehighd}

\section{Norm concentration for subgamma vectors}


\begin{theorem}[Norm concentration for subgamma vectors]
\label{thm:subgamma_norm_concentration}
    Let $x$ be a mean-zero random vector in $\R^d$ that is $(\Sigma, C)$-subgamma, i.e., for all $v \in \R^d$, $v^T x  \in \Gamma(v^T \Sigma v, \|C v\|)$. In other words, it satisfies that for any vector $v \in \R^d$,
    \[
      \E[e^{\lam \inner{x, v}}] \le e^{\lam^2 v^T \Sigma v/2}
    \]
    for $|\lam| \le \frac{1}{\|C v\|}$. Let $\gamma >0$. Then,
    \[
      \Pr\left[\|x\| \ge \sqrt{\Tr(\Sigma)} + t \right] \le 2 e^{-\frac{1}{16}\min(\frac{t^2}{\norm{\Sigma}}, \frac{t}{ \norm{C}}, \frac{2t\sqrt{\Tr(\Sigma)} + t^2}{\norm{C}_F^2})}.
    \]
    Thus, with probability $1-\delta$,
    \begin{align*}
      \|x\| &\leq \sqrt{\Tr(\Sigma)} + 4\sqrt{\norm{\Sigma} \log \frac{2}{\delta}} + 16 \norm{C} \log \frac{2}{\delta} \\
      &\qquad + \min\left(4\norm{C}_F\sqrt{\log \frac{2}{\delta}}, 8\frac{\norm{C}_F^2}{\sqrt{\Tr(\Sigma)}} \log \frac{2}{\delta}\right)
    \end{align*}
  \end{theorem}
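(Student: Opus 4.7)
My plan is to adapt the Hsu-Kakade-Zhang Gaussian-integration argument (\cite{Hsu:2012}) for subgaussian norm concentration to the subgamma setting: control the MGF of $\|x\|^2$, apply a Chernoff bound, and convert the squared-norm tail to one on $\|x\|$ via the elementary identity
\[
\sqrt{\Tr(\Sigma) + \Delta} \le \sqrt{\Tr(\Sigma)} + \sqrt{\Delta},\qquad \Delta = t^2 + 2t\sqrt{\Tr(\Sigma)}.
\]
This identity already explains the Bernstein-like third term $(2t\sqrt{\Tr(\Sigma)} + t^2)/\|C\|_F^2$ in the exponent, since it is exactly the $\Delta$ corresponding to an $\|x\|$ deviation of $t$.

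The starting identity is $e^{\theta\|x\|^2/2} = \E_{g \sim \Normal(0, I_d)}\bigl[e^{\sqrt{\theta}\inner{g, x}}\bigr]$, so by Fubini
\[
\E_x\bigl[e^{\theta\|x\|^2/2}\bigr] = \E_g\bigl[\E_x[e^{\sqrt{\theta}\inner{g, x}}]\bigr].
\]
For each fixed $g$, $\inner{g, x}$ is $(g^T \Sigma g, \|Cg\|)$-subgamma, so the inner MGF is at most $e^{\theta g^T \Sigma g/2}$ on the good region $E = \{g : \sqrt{\theta}\|Cg\| \le 1\}$. In the subgaussian case $E$ is all of $\R^d$ and the outer Gaussian integral evaluates to $\det(I - \theta\Sigma)^{-1/2}$. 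My plan for the bad region $E^c$ is to restrict $\theta$ so that $E^c$ is unlikely: since $\|Cg\|^2$ is a quadratic form in a standard Gaussian with mean $\|C\|_F^2$ and Hanson-Wright deviations scaling by $\|C\|_F\|C\|$ and $\|C\|^2$, pushing $\theta$ below $1/\|C\|_F^2$ makes the $E^c$-contribution a lower-order correction that can be absorbed. Combining this restriction with the standard expansion $-\log(1-x) \le x + 2x^2$ for $x \le 1/2$ applied to the determinant should give a bound of the form
\[
\log \E\bigl[e^{\theta\|x\|^2/2}\bigr] \le \tfrac{1}{2}\theta \Tr(\Sigma) + \phi(\theta),
\]
with $\phi(\theta)$ collecting the second-order corrections in $\|\Sigma\|$, $\|C\|$, and $\|C\|_F$.

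Chernoff then gives $\Pr[\|x\|^2 \ge \Tr(\Sigma) + \Delta] \le \exp(-\theta\Delta/2 + \phi(\theta))$, and optimizing $\theta$ over the three admissible thresholds $\theta \lesssim 1/\|\Sigma\|$, $1/\|C\|^2$, $1/\|C\|_F^2$ produces the three branches of the $\min$ in the exponent; the two alternatives inside the last $\min$ then reflect the two scales of $t$ (small versus large) relative to $\sqrt{\Tr(\Sigma)}$ in the quadratic inversion $\Delta = t^2 + 2t\sqrt{\Tr(\Sigma)}$. The main obstacle will be this Step 2: bounding the $E^c$-contribution carefully enough that we do not introduce cross terms such as $\|C\|_F\|C\|$ that would muddle the clean separation between the $\|C\|$ and $\|C\|_F$ dependencies in the final statement. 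I expect this to require a two-scale decomposition of the Gaussian measure on $g$, designed so that the Hanson-Wright tail correction from $E^c$ is dominated by the Taylor-order contribution coming from the good region.
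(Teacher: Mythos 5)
Your skeleton is the same as the paper's (the Hsu--Kakade--Zhang Gaussian-integration identity, splitting the outer Gaussian variable into the good region $\{\sqrt{\theta}\,\|Cg\| \le 1\}$ and its complement, then a Chernoff bound and the quadratic inversion $\Delta = t^2 + 2t\sqrt{\Tr(\Sigma)}$), but your plan for the bad region has a genuine gap that no restriction of $\theta$ or decomposition of the Gaussian measure on $g$ can repair. The intermediate object you propose to bound, $\E\bigl[e^{\theta\|x\|^2/2}\bigr]$, is simply infinite for a typical subgamma vector: already in $d=1$, a standard Laplace variable is $(\sigma^2,c)$-subgamma for suitable constants, yet $\E[e^{\theta x^2/2}] = +\infty$ for every $\theta > 0$, because the tails of $x$ are only exponential while the integrand is Gaussian in $x$. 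Equivalently, on the bad region $E^c$ the hypothesis gives you \emph{no} control on $\E_x[e^{\sqrt{\theta}\inner{g,x}}]$ (the MGF of a subgamma variable may blow up immediately past the radius $1/\|Cg\|$), so ``$\Pr_g[E^c]$ is small'' does not make the $E^c$-contribution a lower-order correction: small probability times an unbounded (indeed possibly infinite) integrand is not small, and any attempt to control it via Cauchy--Schwarz in $g$ reintroduces $e^{c\theta\|x\|^2}$, which is exactly the quantity you set out to bound --- the argument becomes circular. A two-scale decomposition of the law of $g$ cannot help because the obstruction lives in the tail of $x$, not in the Gaussian measure of $E^c$.

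The missing idea, which is the one nontrivial extra step in the paper's proof, is a truncation-plus-symmetrization reduction: replace $x$ by $y = s\, x \cdot \min\bigl(1, (\sqrt{\Tr(\Sigma)}+t)/\|x\|\bigr)$ with an independent sign $s \in \{\pm 1\}$. The cosh comparison $\E_s[e^{\lambda\inner{y,v}}] \le \cosh(\lambda\inner{x,v}) \le e^{\lambda^2 v^T\Sigma v/2}$ shows $y$ is still $(\Sigma,C)$-subgamma, and the event $\{\|x\| \ge \sqrt{\Tr(\Sigma)}+t\}$ coincides with $\{\|y\| \ge \sqrt{\Tr(\Sigma)}+t\}$, so one may assume WLOG that $\|x\|$ never exceeds the target threshold. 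With this pointwise bound in hand, the bad-region term is handled by Cauchy--Schwarz over $v$ only: $\E_v[e^{\lambda\inner{x,v}}\mathds{1}_{\|Cv\|>1/|\lambda|}] \le e^{\lambda^2(\sqrt{\Tr(\Sigma)}+t)^2 - \frac{1}{16\lambda^2\|C\|^2}} \le 1$ once $\lambda^2 \lesssim \min\bigl(1/((\sqrt{\Tr(\Sigma)}+t)\|C\|),\, 1/\|C\|_F^2\bigr)$, which is precisely where the $\|C\|$ and $\|C\|_F$ branches of the exponent come from. Your good-region treatment (generalized chi-square MGF of $v^T\Sigma v$, i.e.\ the $(\|\Sigma\|\Tr(\Sigma), 2\|\Sigma\|)$-subgamma bound) and the final optimization over $\lambda$ are fine and match the paper; it is only the handling of $E^c$ that needs to be replaced by the truncation argument above.
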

  The proof idea, similar to \citep{Hsu:2012} for the subgaussian case, is as follows.  Define $v \sim N(0, I)$.
  We relate $\Pr[\norm{x} > t]$ to the MGF $\E_x[e^{\lambda^2 \norm{x}^2}]$, which equals $\E_{x,v}[e^{\lambda \inner{x, v}}]$.  If we interchange the order of expectation, as long as $\norm{Cv} \leq 1/\abs{\lambda}$, this is at most $\E_v[e^{\lambda^2 v^T \Sigma v}]$.  Since $v$ is Gaussian, we can compute the last MGF precisely.

  To handle the subgamma setting, we need a way to control $\E_{x,v}[e^{\lambda \inner{x, v}}]$ over those $v$ with $\norm{Cv} > 1/\abs{\lambda}$.
  We do so by showing that (I) WLOG $\norm{x}$ is never strictly larger than the bound we want to show, and (II) then the contribution to the expectation from such cases is small.
  
  \begin{proof} Define $\gamma = \frac{t}{\sqrt{\Tr(\Sigma)}}$, so we
    want to bound $\Pr[\norm{x} \geq (1+\gamma)\sqrt{\Tr(\Sigma)}].$
    We start by showing that WLOG $\norm{x}$ never exceeds this threshold.

  \paragraph{Introducing a bounded norm assumption.} We first show
  that, without loss of generality, we can assume
  $\norm{x} \leq (1+\gamma)\sqrt{\Tr(\Sigma)}$ always.  Let
  $s \in \{\pm 1\}$ be distributed uniformly independent of $x$, and
  define
  \[
    y = s \cdot x \cdot \min\left(1,\frac{(1+\gamma)\sqrt{\Tr(\Sigma)}}{\|x\|}\right).
  \]
  to clip $x$'s norm and symmetrize.  For any $v$ and $x$,
  \begin{align*}
    \E_{s}[e^{\lambda \inner{y, v}}] &= \cosh\left(\lambda \inner{x, v} \cdot \min\left(1,\frac{(1+\gamma)\sqrt{\Tr(\Sigma)}}{\|x\|}\right)\right) \\
    &\leq \cosh(\lambda \inner{x, v}) 
  \end{align*}
  Now, since $x$ is $(\Sigma, C)$-subgamma,
  \begin{align*}
    \E_x[\cosh(\lambda \inner{x, v})] &= \frac{1}{2}\left(\E_{x}[e^{\lambda \inner{x, v}}] + \E_{x}[e^{\lambda \inner{x, -v}}]\right)\\
                                      &\leq \frac{1}{2}\left(e^{\lambda^2 v^T \Sigma v/2} + e^{\lambda^2 (-v)^T \Sigma (-v)/2}\right)\\
                                      &= e^{\lambda^2 v^T \Sigma v/2}
    \end{align*}
  and so
  \[
    \E_y[e^{\lambda \inner{y, v}}] \leq e^{\lambda^2 v^T \Sigma v/2}.
  \]
  Thus $y$ is also $(\Sigma, C)$-subgamma.  The target quantity in our
  theorem is the same for $y$ as for $x$:
  $\Pr[\norm{x} \geq (1+\gamma)\sqrt{\Tr(\Sigma)}] = \Pr[\norm{y} \geq
  (1+\gamma)\sqrt{\Tr(\Sigma)}]$.  Since
  $\norm{y} \leq (1+\gamma)\sqrt{\Tr(\Sigma)}$ always, by considering
  $y$ instead of $x$, we can WLOG assume that
  $\norm{x} \leq (1+\gamma)\sqrt{\Tr(\Sigma)}$ in our theorem proof.

  \paragraph{Relating probability to $\E_{x,v}[e^{\lambda \inner{x, v}}]$.}
    Define 
    $$\alpha := \Pr\left[\|x\| \ge (1 + \gamma) \sqrt{\Tr(\Sigma)} \right]$$
    so that by Markov's inequality applied to $e^{\lam^2 \|x\|^2/2}$,
    \[
      \alpha \le \frac{\E[e^{\lam^2 \|x\|^2/2}]}{e^{\lam^2(1+\gamma)^2 \Tr(\Sigma)/2}}
    \]
    for any $\lam$.
    Now, let $v \sim N(0, I_d)$.  For any $x$,
    \[
      \E_{v}[\e^{\lambda\inner{x, v}}] = e^{\lam^2 \|x\|^2/2}
    \]
    so
    \begin{equation}
      \label{eq:alpha_mgf_bound}
      \alpha \le \E_{x,v}[e^{\lambda\inner{x,v}}] e^{-\lam^2(1+\gamma)^2 \Tr(\Sigma)/2}.
    \end{equation}
    \paragraph{Upper bounding $\E_{x,v}[e^{\lambda \inner{x, v}}]$.}
    We will bound the RHS above by making the inner expectation over
    $x$. Since $x$ is $(\Sigma, C)$-subgamma, for every $v$,
    \begin{align*}
      \E_x[e^{\lam \inner{x, v}}] \le e^{\lam^2 v^T \Sigma v/2} \qquad   \forall |\lam| \le \frac{1}{\|Cv\|},
    \end{align*}
  Therefore
  \begin{align}
    \E_{x,v}[e^{\lam \inner{x, v}}]
    &= \E_{x,v}[e^{\lam \inner{x, v}} 1_{\norm{Cv} \leq 1/\abs{\lam}} + e^{\lam \inner{x, v}} 1_{\norm{Cv} > 1/\abs{\lam}}]\notag\\
    &\hspace{-2em}\leq \E_v[e^{\lam^2 v^T \Sigma v/2} 1_{\norm{Cv} \leq 1/\abs{\lam}}] + \E_{x,v}[e^{\lam \inner{x, v}} 1_{\norm{Cv} > 1/\abs{\lam}}]\notag\\
    &\hspace{-2em}\leq \E_v[e^{\lam^2 v^T \Sigma v/2}] + \E_x[ \E_v[e^{\lam \inner{x, v}} 1_{\norm{Cv} > 1/\abs{\lam}}]]\label{eq:xv}
  \end{align}
  We start with the first term.  Let the eigenvalues of $\Sigma$ be
  $\sigma_1^2 \ge \sigma_2^2 \ge \dots \ge \sigma_d^2$. Then,
  $v^T \Sigma v/2$ is a generalized chi-squared distribution,
  distributed as $\sum_i u_i^2$ for independent Gaussian variables
  $u_i \sim N(0, \sigma_i^2/2)$.  It is easy to check that
  $u^2$ for $u \sim N(0, 1)$ is $(4, 4)$-subgamma, i.e.,
  \[
    \E[e^{\lambda (u^2-\E[u^2])}] = \frac{e^{-\lambda}}{\sqrt{1 - 2 \lambda}} \leq e^{2 \lambda^2} \qquad \forall \abs{\lambda} \leq \frac{1}{4}.
  \]
  Therefore $\sum u_i^2$ is $(\sum_i \sigma_i^4, 2 \max \sigma_i^2)$ =
  $(\norm{\Sigma}_F^2, 2 \norm{\Sigma})$-subgamma.  Since
  $\norm{\Sigma}_F^2 \leq \norm{\Sigma}\Tr(\Sigma)$, $v^T \Sigma v$ is
  also $(\norm{\Sigma}\Tr(\Sigma), 2 \norm{\Sigma})$-subgamma.

  Including the mean term as well ($\E[v^T \Sigma v/2] = \Tr(\Sigma)/2$), we have
  \begin{align}\label{eq:mainvSv}
    \E_{v}[e^{\lambda^2 v^T \Sigma v/2}]\leq e^{\lambda^2 \Tr(\Sigma)/2} \cdot e^{\lambda^4 \Tr(\Sigma) \norm{\Sigma}/2} \quad\forall \lambda^2 \leq \frac{1}{2\norm{\Sigma}}.
  \end{align}

  We now bound the second term in~\eqref{eq:xv} for each $x$.  Since
  $v$ is i.i.d.~gaussian, $\norm{Cv} \leq \norm{C}_F + \norm{C}\sqrt{2 \log \frac{1}{\delta}}$ with probability $1-\delta$ (see Equation~\ref{eq:gaussian-tail}).  Therefore, for all
  $\abs{\lam} < \frac{1}{2\norm{C}_F}$,
  \[
    \Pr[\norm{Cv} > 1/\abs{\lam}] \leq e^{-\frac{(1/\abs{\lam} - \norm{C}_F)^2}{2\norm{C}^2}} \leq e^{-\frac{1}{8\lam^2\norm{C}^2}}
  \]
  and so by Cauchy-Schwarz, and our bound on $\norm{x}$,
  \begin{align*}
    \E_v[e^{\lam \inner{x, v}} 1_{\norm{Cv} > 1/\abs{\lam}}] &\leq \sqrt{\E_v[e^{2\lam \inner{x, v}}]\Pr[\norm{Cv} > 1/\abs{\lam}]}\\
                                                          &\leq \sqrt{e^{2 \lam^2 \norm{x}^2} e^{-\frac{1}{8\lam^2\norm{C}^2}}}\\
                                                          &= e^{\lam^2 (1+\gamma)^2 \Tr(\Sigma) - \frac{1}{16\lam^2\norm{C}^2}}.
  \end{align*}
  Therefore, as long as
  $\lam^2 \leq \min(\frac{1}{4(1+\gamma)\sqrt{ \Tr(\Sigma)} \norm{C}}, \frac{1}{4 \norm{C}_F^2})$,
  \[
    \E_v[e^{\lam \norm{x} v_1} 1_{\norm{Cv} > 1/\abs{\lam}}] \leq 1.
  \]

  Combining with~\eqref{eq:mainvSv} (which is a bound always larger than $1$) and~\eqref{eq:xv},
  \begin{align*}
    &\E_{x,v}[e^{\lam \inner{x, v}}] \leq 2e^{\lambda^2 \Tr(\Sigma)/2} \cdot e^{\lambda^4 \Tr(\Sigma) \norm{\Sigma}/2}
    \notag\\
&\qquad \forall \lambda^2 \leq \min(\frac{1}{2\norm{\Sigma}}, \frac{1}{4(1+\gamma)\sqrt{ \Tr(\Sigma)} \norm{C}}, \frac{1}{4\norm{C}_F^2})
  \end{align*}
  and with~\eqref{eq:alpha_mgf_bound},
    \begin{align*}
    &\alpha \leq 2e^{\frac{1}{2}\lambda^2 \Tr(\Sigma)(\lambda^2 \norm{\Sigma}-2 \gamma - \gamma^2)}\\
    &\forall \lambda^2 \leq \min(\frac{1}{2\norm{\Sigma}}, \frac{1}{4(1+\gamma)\sqrt{ \Tr(\Sigma)} \norm{C}}, \frac{1}{4\norm{C}_F^2})
    \end{align*}
  \paragraph{Final bound.}
  By also restricting $\lambda^2$ to be at most $\frac{2\gamma + \gamma^2}{2\norm{\Sigma}}$, we get:
  \begin{align*}
    &\alpha \leq 2e^{-\frac{1}{4}\lambda^2 \Tr(\Sigma)(2 \gamma + \gamma^2)}\\
    & \forall \lambda^2 \leq \min(\frac{1}{2\norm{\Sigma}}, \frac{2\gamma + \gamma^2}{2\norm{\Sigma}}, \frac{1}{4(1+\gamma)\sqrt{ \Tr(\Sigma)} \norm{C}}, \frac{1}{4\norm{C}_F^2})
  \end{align*}
  Set $\lambda^2$ to the maximum of this range to get
  \[
    \alpha \leq 2e^{-\frac{1}{4}\min(\frac{1}{2\norm{\Sigma}}, \frac{2\gamma + \gamma^2}{2\norm{\Sigma}}, \frac{1}{4(1+\gamma)\sqrt{ \Tr(\Sigma)} \norm{C}}, \frac{1}{4\norm{C}_F^2})(2\gamma + \gamma^2)\Tr(\Sigma)}
  \]
  The first two cases can be merged:
  $\min(\frac{2\gamma + \gamma^2}{2}, \frac{(2 \gamma +
    \gamma^2)^2}{2}) \geq \frac{\gamma^2}{2}$.  Thus:
  \[
    \alpha \leq 2 e^{-\frac{1}{16}\min(\frac{\gamma^2 \Tr(\Sigma)}{\norm{\Sigma}}, \frac{\gamma \sqrt{ \Tr(\Sigma)}}{ \norm{C}}, \frac{(2\gamma + \gamma^2)\Tr(\Sigma)}{\norm{C}_F^2})}.
  \]
  Plugging in $\gamma = \frac{t}{\sqrt{\Tr(\Sigma)}}$ gives the first
  result, and setting $t$ such that the exponent is
  $\log \frac{2}{\delta}$ gives the second.
\end{proof}

\section{Conclusion and Future Work}

In this paper we gave an algorithm for location estimation in high dimensions, getting non-asymptotic error bounds approaching those of $\mathcal N(0, \frac{\I_R^{-1}}{n})$, where $\I_R$ is the Fisher information matrix of our distribution when smoothed using $\mathcal N(0, R)$ for small $R$ that decays with $n$. In the process of proving this result, we obtained a new concentration inequality for the norm of high-dimensional random variables whose $1$-dimensional projections are subgamma, which may be of independent interest.  Even in $1$ dimension, our results give improvement for constant failure probability.
For function classes such as a mixture of Laplacians, no previous work gives a rate for the asymptotic convergence to the Cram\'er-Rao bound as $n \to \infty$ for fixed $\delta$.

This paper is one step in the finite-sample theory of parameter estimation.  Our quantitative bounds could be improved: our bound on the rate of convergence to Cram\'er-Rao is $1 + \frac{1}{\poly(n)}$, but one could hope for faster convergence ($1 + \frac{1}{\sqrt{n}}$ in general, and $1 + \frac{1}{n}$ for some specific function classes).  
More generally, one can consider estimation of parameters other than location; the Cram\'er-Rao bound still relates the asymptotic behavior to the Fisher information, but a rate of convergence remains elusive.  We believe that understanding high-dimensional location estimation is a good step toward understanding the estimation of multiple parameters.

\section{Acknowledgments}
Shivam Gupta and Eric Price are supported by NSF awards CCF-2008868, CCF-1751040 (CAREER),
and the NSF AI Institute for Foundations of Machine Learning (IFML). Some of this work was done while Shivam Gupta was visiting UC Berkeley. Jasper C.H. Lee is supported
in part by the generous funding of a Croucher Fellowship for Postdoctoral Research and by NSF
award DMS-2023239.



\bibliography{icml2023}

\begin{thebibliography}{19}
\providecommand{\natexlab}[1]{#1}
\providecommand{\url}[1]{\texttt{#1}}
\expandafter\ifx\csname urlstyle\endcsname\relax
  \providecommand{\doi}[1]{doi: #1}\else
  \providecommand{\doi}{doi: \begingroup \urlstyle{rm}\Url}\fi

\bibitem[Bickel \& Doksum(2015)Bickel and Doksum]{Bickel:2015}
Bickel, P.~J. and Doksum, K.~A.
\newblock \emph{Mathematical statistics: basic ideas and selected topics,
  volume I}.
\newblock Chapman and Hall/CRC, 2015.

\bibitem[Boucheron et~al.(2013)Boucheron, Lugosi, and Massart]{boucheron}
Boucheron, S., Lugosi, G., and Massart, P.
\newblock \emph{Concentration Inequalities - {A} Nonasymptotic Theory of
  Independence}.
\newblock Oxford University Press, 2013.
\newblock ISBN 978-0-19-953525-5.
\newblock \doi{10.1093/acprof:oso/9780199535255.001.0001}.
\newblock URL \url{https://doi.org/10.1093/acprof:oso/9780199535255.001.0001}.

\bibitem[Bousquet(2002)]{Bousquet:2002}
Bousquet, O.
\newblock A bennett concentration inequality and its application to suprema of
  empirical processes.
\newblock \emph{Comptes Rendus Mathematique}, 334\penalty0 (6):\penalty0
  495--500, 2002.

\bibitem[Bousquet(2003)]{Bousquet:2003}
Bousquet, O.
\newblock Concentration inequalities for sub-additive functions using the
  entropy method.
\newblock In Gin{\'e}, E., Houdr{\'e}, C., and Nualart, D. (eds.),
  \emph{Stochastic Inequalities and Applications}, pp.\  213--247, Basel, 2003.

\bibitem[Catoni(2012)]{Catoni:2012}
Catoni, O.
\newblock {Challenging the empirical mean and empirical variance: A deviation
  study}.
\newblock \emph{Annales de l'Institut Henri Poincaré, Probabilités et
  Statistiques}, 48\penalty0 (4):\penalty0 1148 -- 1185, 2012.
\newblock \doi{10.1214/11-AIHP454}.
\newblock URL \url{https://doi.org/10.1214/11-AIHP454}.

\bibitem[Cherapanamjeri et~al.(2019)Cherapanamjeri, Flammarion, and
  Bartlett]{pmlr-v99-cherapanamjeri19b}
Cherapanamjeri, Y., Flammarion, N., and Bartlett, P.~L.
\newblock Fast mean estimation with sub-gaussian rates.
\newblock In Beygelzimer, A. and Hsu, D. (eds.), \emph{Proceedings of the
  Thirty-Second Conference on Learning Theory}, volume~99 of \emph{Proceedings
  of Machine Learning Research}, pp.\  786--806. PMLR, 25--28 Jun 2019.
\newblock URL \url{https://proceedings.mlr.press/v99/cherapanamjeri19b.html}.

\bibitem[Devroye et~al.(2016)Devroye, Lerasle, Lugosi, and
  Oliveira]{Devroye:2016}
Devroye, L., Lerasle, M., Lugosi, G., and Oliveira, R.~I.
\newblock Sub-{G}aussian mean estimators.
\newblock \emph{Ann.~Stat}, 44\penalty0 (6):\penalty0 2695--2725, 2016.

\bibitem[Gupta et~al.(2022)Gupta, Lee, Price, and Valiant]{Gupta:2022}
Gupta, S., Lee, J. C.~H., Price, E., and Valiant, P.
\newblock Finite-sample maximum likelihood estimation of location.
\newblock In \emph{Proc.~NeurIPS'22}, 2022.

\bibitem[Hendeby(2005)]{Fisher_upper_bound}
Hendeby, G.
\newblock \emph{Fundamental Estimation and Detection Limits in Linear
  Non-Gaussian Systems}.
\newblock PhD thesis, 11 2005.

\bibitem[Hopkins(2018)]{Hopkins2018SubGaussianME}
Hopkins, S.~B.
\newblock Sub-gaussian mean estimation in polynomial time.
\newblock \emph{ArXiv}, abs/1809.07425, 2018.

\bibitem[Hsu et~al.(2012)Hsu, Kakade, and Zhang]{Hsu:2012}
Hsu, D., Kakade, S., and Zhang, T.
\newblock A tail inequality for quadratic forms of subgaussian random vectors.
\newblock \emph{Electronic Communications in Probability}, 17:\penalty0 1--6,
  2012.

\bibitem[Lee \& Valiant(2022{\natexlab{a}})Lee and Valiant]{Lee:2021}
Lee, J. C.~H. and Valiant, P.
\newblock Optimal sub-gaussian mean estimation in $\mathbb{R}$.
\newblock In \emph{Proc.~FOCS'21}, pp.\  672--683, 2022{\natexlab{a}}.

\bibitem[Lee \& Valiant(2022{\natexlab{b}})Lee and Valiant]{Lee:2022}
Lee, J. C.~H. and Valiant, P.
\newblock Optimal sub-gaussian mean estimation in very high dimensions.
\newblock In \emph{Proc.~ITCS'22}, pp.\  98:1--98:21, 2022{\natexlab{b}}.

\bibitem[Lugosi \& Mendelson(2017)Lugosi and Mendelson]{lugosi_mendelson}
Lugosi, G. and Mendelson, S.
\newblock Sub-gaussian estimators of the mean of a random vector.
\newblock \emph{Annals of Statistics}, 47, 02 2017.
\newblock \doi{10.1214/17-AOS1639}.

\bibitem[Miao(2010)]{Miao:2010}
Miao, Y.
\newblock Concentration inequality of maximum likelihood estimator.
\newblock \emph{Applied Mathematics Letters}, 23\penalty0 (10):\penalty0
  1305--1309, 2010.

\bibitem[Pinelis(2017)]{Pinelis:2017}
Pinelis, I.
\newblock {Optimal-order uniform and nonuniform bounds on the rate of
  convergence to normality for maximum likelihood estimators}.
\newblock \emph{Electronic Journal of Statistics}, 11\penalty0 (1):\penalty0
  1160 -- 1179, 2017.
\newblock \doi{10.1214/17-EJS1264}.
\newblock URL \url{https://doi.org/10.1214/17-EJS1264}.

\bibitem[Spokoiny(2011)]{Spokoiny:2011}
Spokoiny, V.
\newblock Parametric estimation. finite sample theory.
\newblock \emph{The Annals of Statistics}, 40\penalty0 (6):\penalty0
  2877--2909, 2011.
\newblock \doi{10.1214/12-AOS1054}.

\bibitem[user940(2015)]{rearrangement}
user940.
\newblock Show $\mathbb{E}[f(x)g(x)] \geq \mathbb{E}[f(x)]\mathbb{E}[g(x)]$ for
  $f,g$ bounded, nondecreasing.
\newblock Mathematics Stack Exchange, 2015.
\newblock URL \url{https://math.stackexchange.com/q/1446526}.
\newblock Downloaded 2023-01.

\bibitem[van~der Vaart(2000)]{vanDerVaart:2000asymptotic}
van~der Vaart, A.
\newblock \emph{Asymptotic Statistics}.
\newblock Cambridge University Press, 2000.
\newblock ISBN 9780521784504.

\end{thebibliography}
\bibliographystyle{icml2023}

\newpage
\appendix
\onecolumn
\section{Complete analysis of 1-dimensional location estimation}
\label{app:1-d}


\subsection{1-dimensional local estimation}

The following algorithm (Algorithm~\ref{alg:1-dLocalMLE}) is the \emph{local} part of the 1-dimensional estimation: it assumes that there is an initial estimate that is close to the true parameter $\lambda$.

\setcounter{algorithm}{0}
\begin{algorithm}\caption{Local smoothed MLE for one dimension}
\label{alg:1-dLocalMLE}
\vspace*{3mm}
\paragraph{Input Parameters:}
\begin{itemize}
\item Description of $f$, smoothing parameter $r$, samples $x_1, \ldots, x_n \overset{i.i.d.}{\sim} f^{\lambda}$ and initial estimate $\lambda_1$ of $\lambda$
\end{itemize}
\begin{enumerate}
\item Let $s(\lambdahat)$ be the score function of $f_r$, the $r$-smoothed version of $f$.
\item For each sample $x_i$, compute a perturbed sample $x'_i = x_i + \Normal(0,r^2)$ where all the Gaussian noise are drawn independently across all the samples.
\item Compute the empirical score at $\lambda_1$, namely $\hat{s}(\lambda_1) = \frac{1}{n}\sum_{i = 1}^n s(x'_i-\lambda_1)$.
\item Return $\lambdahat = \lambda_1 - (\hat{s}(\lambda_1)/\I_r)$.
\end{enumerate}
\end{algorithm}

The local algorithm is what uses the simplified view of smoothed MLE and distinguishes our approach from the previous approach of Gupta et al.~\shortcite{Gupta:2022}.

We will show the following guarantee for Algorithm~\ref{alg:1-dLocalMLE}.
It says that, if the initial estimate $\lambda_1$ has distance at most $\eps_{\max}$ from true parameter $\lambda$, and suppose we choose a sufficiently large smoothing parameter $r$, then the output of Algorithm~\ref{alg:1-dLocalMLE} will be close to the true parameter $\lambda$.

\begin{restatable}{lemma}{LemLocalMLE}
\label{lem:1-dLocalMLE}
In Algorithm~\ref{alg:1-dLocalMLE}, suppose 
$|\lambda_1 - \lambda| \le \eps_{\max}$ for some $\eps_{\max} \ge \sqrt{\frac{2\log\frac{2}{\delta}}{n} \frac{1}{\I_r}}$.
Suppose also that the smoothing parameter is $r \ge 2\eps_{\max}$, and there exists a parameter $\gamma \ge 1$ 
such that 1) $r^2 \sqrt{\I_r} \ge \gamma \eps_{\max}$, 2) $r^2\sqrt{\log\frac{2}{\delta}/n} \ge \gamma \eps^2_{\max}$ and 3) $(\log\frac{2}{\delta})/n \le 1/\gamma^2$.
(For interpretation, $\gamma$ is supposed to be large and ``$\omega(1)$" when the lemma is used.)

Then, with probability at least $1-\delta$ over $n$ samples from $f^{\lambda}$, the output of Algorithm~\ref{alg:1-dLocalMLE} satisfies
\[ |\lambdahat - \lambda| \le \left(1+O\left(\frac{1}{\gamma}\right)\right)\sqrt{\frac{2\log\frac{2}{\delta}}{n \I_r}} \]
\end{restatable}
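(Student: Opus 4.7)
The plan is to set $\eps := \lambda_1 - \lambda$, so that $|\eps| \le \eps_{\max} \le r/2$, and observe that after the Gaussian perturbation step of Algorithm~\ref{alg:1-dLocalMLE}, the perturbed samples have the form $x'_i = y_i + \lambda$ with $y_i \sim f_r$ i.i.d., so $\hat{s}(\lambda_1) = \frac{1}{n}\sum_i s(y_i - \eps)$. The key decomposition is then
\[ \lambdahat - \lambda \;=\; \eps - \frac{\hat{s}(\lambda_1)}{\I_r} \;=\; \underbrace{\left(\eps - \frac{\E_{y \sim f_r}[s(y-\eps)]}{\I_r}\right)}_{\text{(I) Taylor error}} \;-\; \underbrace{\frac{\hat s(\lambda_1) - \E_{y \sim f_r}[s(y-\eps)]}{\I_r}}_{\text{(II) empirical error}}, \]
and I would control (I) with the Taylor estimate~\eqref{eq:staylor} of \cite{Gupta:2022}, and (II) with the subgamma concentration~\eqref{eq:serror}.

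For (I), applying~\eqref{eq:staylor} (valid since $|\eps| \le \eps_{\max} \le r/2$) gives
\[ |\text{(I)}| = O\!\left(\frac{\eps^2}{r^2\sqrt{\I_r}}\right) \le O\!\left(\frac{\eps_{\max}^2}{r^2\sqrt{\I_r}}\right). \]
I would then use Condition~2 ($r^2\sqrt{\log(2/\delta)/n} \ge \gamma\eps_{\max}^2$) to replace $\eps_{\max}^2/r^2$ by $\sqrt{\log(2/\delta)/n}/\gamma$, yielding $|\text{(I)}| \le O(1/\gamma)\sqrt{\log(2/\delta)/(n\I_r)}$, which is $O(1/\gamma)$ times the target bound.

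For (II), the concentration bound~\eqref{eq:serror}, after dividing by $\I_r$, splits into a leading subgaussian term $(1+o(1))\sqrt{2\log(2/\delta)/(n\I_r)}$ and a lower-order term $\log(2/\delta)/(nr\I_r)$. For the lower-order term I would rewrite
\[ \frac{\log(2/\delta)}{nr\I_r} \;=\; \sqrt{\frac{\log(2/\delta)}{n\I_r}} \cdot \frac{1}{r}\sqrt{\frac{\log(2/\delta)}{n\I_r}}, \]
and bound $\frac{1}{r}\sqrt{\log(2/\delta)/(n\I_r)} \le O(1/\gamma)$ by combining Conditions~1 and~3 (Condition~1 gives $r^2\sqrt{\I_r} \ge \gamma\eps_{\max} \ge \gamma\sqrt{2\log(2/\delta)/(n\I_r)}$, and squaring/rearranging yields the required inequality, with Condition~3 to make the leading constants behave). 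For the $(1+o(1))$ factor on the subgaussian term, I would trace the $o(1)$ in the derivation of~\eqref{eq:serror}; it is driven by $|\eps|/r$, and Conditions~2 and~3 together imply $\eps_{\max}/r \le 1/\gamma$, so the factor is $(1+O(1/\gamma))$.

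Combining the two error sources then produces the claimed bound $(1+O(1/\gamma))\sqrt{2\log(2/\delta)/(n\I_r)}$ with probability $1-\delta$. The main obstacle is the bookkeeping: the three conditions on $\gamma$ are there precisely to absorb the three separate lower-order terms (the Taylor remainder, the subgamma ``spike'' $\log(2/\delta)/(nr)$ term, and the multiplicative $(1+o(1))$ slack from the concentration), so the delicate part is to verify each one cleanly. A secondary subtlety is the mild dependence of~\eqref{eq:serror} on the shift $\eps$ (the concentration was stated ``when $r \gg |\eps|$''), which I would handle by re-deriving the subgamma parameters of $s(y-\eps)$ from those of $s(y)$ and showing the change only multiplies $\I_r$ by $1+O(\eps/r)$, i.e., $1+O(1/\gamma)$.
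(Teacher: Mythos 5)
Your decomposition and use of the prior facts match the paper's proof almost exactly: write $\lambda_1 = \lambda + \eps$, bound the Taylor error via \eqref{eq:staylor} (Fact~\ref{fact:grad_expectation_theta}) and absorb it with Condition~2, bound the empirical deviation via the subgamma concentration (Fact~\ref{fact:scoreestimate}), and absorb the additive $\log(2/\delta)/(nr)$ spike with Conditions~1 and~3 exactly as you describe. That part of the plan is sound and is the same route the paper takes.

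The one genuine gap is your treatment of the multiplicative $(1+o(1))$ factor on the subgaussian term. You claim the shift only inflates the variance proxy by $1+O(\eps/r)$, and that Conditions~2 and~3 (which give $\eps_{\max}/r \le 1/\gamma$) suffice. But the available bound on the shifted second moment (Fact~\ref{fact:var_close_to_fisher}, i.e.\ Lemma~C.3 of \cite{Gupta:2022}) is
\[
\E_{x\sim f_r}\bigl[s_r^2(x+\eps)\bigr] \le \I_r\left(1+O\left(\frac{\eps}{r}\sqrt{\log \frac{e}{r^2\I_r}}\right)\right),
\]
with an extra $\sqrt{\log(e/(r^2\I_r))}$ factor that can be arbitrarily large when $r^2\I_r$ is small, so $\eps_{\max}/r \le 1/\gamma$ alone does not make this $O(1/\gamma)$; your proposed ``re-derivation'' giving inflation $1+O(\eps/r)$ is a strictly stronger statement than what the cited facts provide and is not justified. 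The paper instead invokes Condition~1: since $\sqrt{\log(e/(r^2\I_r))} \le \sqrt{e/(r^2\I_r)}$, the inflation is at most $O\bigl(\eps_{\max}/(r^2\sqrt{\I_r})\bigr) \le O(1/\gamma)$, and Condition~1 is also what verifies the precondition $r/\eps = \Omega(\sqrt{\log e/(r^2\I_r)})$ of that fact. So the fix is local and stays within your framework---route this step through Condition~1 rather than Conditions~2 and~3---but as written that accounting step would not go through.
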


The proof of Lemma~\ref{lem:1-dLocalMLE} relies on the following facts from~\cite{Gupta:2022} about the concentration of the empirical score of the smoothed distribution, when evaluated at an initial parameter estimate that are close to the true parameter.

The first fact is the subgamma concentration of the score.
\begin{restatable}{fact}{FactScoreEstimate}
\label{fact:scoreestimate}
Suppose we take $n$ i.i.d.~samples $y_1,\ldots,y_n \from f_r^\lambda$, and consider the empirical score function $\hat{s}$ mapping a candidate parameter $\hat{\lambda}$ to $\frac{1}{n}\sum_i s_r(y_i-\lambdahat)$, where $s_r$ is the score function of $f_r$.

Then, for any $|\eps| \le r/2$,
\begin{align*}
  \Pr_{y_i \overset{i.i.d.}{\sim} f_r^\lambda}\Bigg(&|\hat{s}(\lambda+\eps) - \E_{x\from f_r}[s(x-\eps)]| \ge \sqrt{\frac{2\max(\E_x[s^2_r(x - \eps)], \I_r)\log\frac{2}{\delta}}{n}} + \frac{15\log\frac{2}{\delta}}{nr}\Bigg) \le \delta
\end{align*}
\end{restatable}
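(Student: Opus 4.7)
The plan is to apply a standard Bernstein/subgamma concentration inequality to the empirical average $\hat s(\lambda+\eps)$, after establishing that each summand is a subgamma random variable with the claimed variance proxy and tail parameter. The key structural ingredient is already available: as mentioned earlier in the paper, $s_r$ evaluated on a sample from a small shift of $f_r$ is subgamma with tail parameter $O(1/r)$, coming from the Gaussian smoothing structure of $f_r$.

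First, by translation invariance of $f_r^\lambda$, the variables $s_r(y_i - (\lambda+\eps))$ are i.i.d.~copies of $s_r(x-\eps)$ for $x \sim f_r$. So the problem reduces to showing that the average of $n$ i.i.d.~copies of $s_r(x-\eps)$ concentrates around its expectation $\E_{x \sim f_r}[s_r(x-\eps)]$ at the stated rate.

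Next, I invoke the subgamma property of the shifted score, whose content is the following: for $|\eps| \leq r/2$, the random variable $s_r(x-\eps)$ under $x \sim f_r$ is subgamma with variance proxy $\E_x[s_r^2(x-\eps)]$ and tail parameter $O(1/r)$. Intuitively, the tail parameter $O(1/r)$ arises because Gaussian smoothing forces the score $s_r$ to have derivative bounded in magnitude by $1/r^2$, and the density ratio $f_r(x)/f_r(x-\eps)$ remains bounded under $|\eps|\le r/2$, so the MGF of the shifted score is controlled by a sub-exponential quadratic with the stated parameters. The $\max$ with $\I_r$ in the final statement is a slack allowance: if $\E_x[s_r^2(x-\eps)]$ happens to be smaller than $\I_r$, taking the maximum still yields a valid upper bound on the variance proxy.

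Finally, the standard tail inequality for an average of $n$ i.i.d.~subgamma variables with parameters $(\sigma^2, c)$ states that with probability at least $1-\delta$,
\[
  \left|\frac{1}{n}\sum_i X_i - \E[X_1]\right| \leq \sqrt{\frac{2\sigma^2 \log(2/\delta)}{n}} + \frac{c\log(2/\delta)}{n}.
\]
Plugging in $\sigma^2 = \max(\E_x[s_r^2(x-\eps)], \I_r)$ and $c = O(1/r)$ and absorbing the universal constants into the factor $15$ yields the stated bound. The main obstacle is the second step — the subgamma property of the shifted score with tail parameter $O(1/r)$ — which is a genuinely nontrivial structural fact about Gaussian-smoothed distributions that requires the assumption $|\eps|\le r/2$ to control density ratios; once that is in hand (as established in~\cite{Gupta:2022}), the rest is a routine invocation of Bernstein's inequality.
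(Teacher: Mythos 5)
Your proposal is correct and matches the paper's treatment: the paper does not prove this Fact itself but imports it from \cite{Gupta:2022} (their Corollary 3.3 and Lemma C.3), whose content is precisely your decomposition --- the shifted score $s_r(x-\eps)$ is subgamma with variance proxy $\max(\E_x[s_r^2(x-\eps)],\I_r)$ and tail parameter $O(1/r)$ for $|\eps|\le r/2$, combined with the standard Bernstein-type tail bound for an average of i.i.d.\ subgamma variables. The only quibble is that the $\max$ with $\I_r$ is not mere slack but is what the underlying moment argument actually yields (compare the high-dimensional analogue, Lemma~\ref{lem:shifted_score_subgamma}); since you plug the $\max$ into the tail bound anyway, this does not affect correctness.
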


The next two facts bound the expectation and second moment of the score.
\begin{restatable}{fact}{FactGradExpectation}
  \label{fact:grad_expectation_theta}
  For any $|\eps| \leq r/2$, the expected score
  $\E_{x\sim f_r}\left[s_r(x+\eps) \right]$ satisfies
  \[
    \E_{x\sim f_r}\left[s_r(x+\eps)\right] \in \left[- \I_r \eps \pm O\left(\sqrt{\I_r} \frac{\eps^2}{r^2}\right)\right]
  \]
\end{restatable}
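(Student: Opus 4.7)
The plan is to Taylor-expand to second order, read off the two leading terms from the defining identities of the score function, and control the quadratic remainder using the regularity that Gaussian convolution imparts to $f_r$. The ``textbook'' Taylor argument yields the first two terms for free; all the real work is in bounding the remainder, and it is there that the smoothing parameter $r$ becomes essential.

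Concretely, I would first change variables to write
\[
\E_{x \sim f_r}[s_r(x+\eps)] = \int s_r(z)\, f_r(z-\eps)\, dz,
\]
and Taylor-expand $f_r(z-\eps) = f_r(z) - \eps f_r'(z) + \int_0^\eps (\eps-t)\, f_r''(z-t)\, dt$. Integrating against $s_r(z)$, the zeroth-order piece $\int s_r f_r = \E_{f_r}[s_r] = 0$ vanishes by the standard score identity, and the first-order piece becomes $-\eps \int s_r(z)^2 f_r(z)\, dz = -\eps\, \I_r$ after substituting $f_r' = s_r f_r$. Everything therefore reduces to showing that
\[
R := \int_0^\eps (\eps - t) \int s_r(z)\, f_r''(z - t)\, dz\, dt = \cO(\sqrt{\I_r}\, \eps^2/r^2).
\]

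The main obstacle is exactly this remainder bound. Writing $f_r'' = (s_r' + s_r^2) f_r$, the inner integral equals $\E_{f_r}[\,s_r(Y+t)\,(s_r'(Y)+s_r(Y)^2)\,]$, which by Cauchy--Schwarz is at most $\sqrt{\E_{f_r}[s_r(Y+t)^2]} \cdot \sqrt{\E_{f_r}[(s_r' + s_r^2)^2]}$. For $|t| \le |\eps| \le r/2$, the first factor is $(1+o(1))\sqrt{\I_r}$ since the smoothed score varies slowly on scales $\ll r$. The second factor is the $L^2(f_r)$ norm of $f_r''/f_r$, and this is where Gaussian smoothing at scale $r$ pays off: convolution with $\cN(0, r^2)$ imparts $\cO(1/r^k)$ bounds on the $k$-th log-derivative, via Hermite polynomial identities for derivatives of $\phi_r$ and Tweedie-type representations of $s_r$ and $s_r'$ as scaled conditional moments of the Gaussian noise. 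Combining yields $\cO(\sqrt{\I_r}/r^2)$ for the inner integral, and integrating $t$ over $[0,\eps]$ gives the claimed bound on $R$. This second-derivative regularity of Gaussian-smoothed log densities is the essential non-trivial ingredient; no analogous bound holds for the unsmoothed $f$, which is exactly why the smoothed Fisher information $\I_r$ behaves so much better than $\I$ in finite-sample analysis.
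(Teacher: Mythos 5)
Your setup is fine: the change of variables, the Taylor expansion with integral remainder, and the identification of the zeroth- and first-order terms with $0$ and $-\I_r\eps$ are all correct, and your bound $\E_{x\sim f_r}[(f_r''/f_r)^2]\lesssim 1/r^4$ is genuinely easy to complete via the representation $f_r''(x)/f_r(x)=\E[(Z^2-r^2)/r^4\mid x]$ and Jensen. The gap is in the other Cauchy--Schwarz factor. You assert that $\E_{Y\sim f_r}[s_r(Y+t)^2]=(1+o(1))\I_r$ for all $|t|\le|\eps|\le r/2$ ``since the smoothed score varies slowly on scales $\ll r$,'' but $t$ here is allowed to be a constant fraction of $r$, not $\ll r$, and no such clean bound is available: the known statement (Fact~\ref{fact:var_close_to_fisher}, imported from Gupta et al.) is $\E[s_r^2(x+\eps)]\le \I_r\bigl(1+O\bigl(\tfrac{|\eps|}{r}\sqrt{\log\tfrac{e}{r^2\I_r}}\bigr)\bigr)$, it carries a precondition $r/|\eps|=\Omega\bigl(\sqrt{\log\tfrac{e}{r^2\I_r}}\bigr)$, and proving even that is a nontrivial lemma in its own right. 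Since $r^2\I_r$ can be arbitrarily small, the correction factor is unbounded over instances; so even if you invoke the known fact, your remainder picks up an extra $\bigl(\log\tfrac{e}{r^2\I_r}\bigr)^{1/4}$ factor and an extra hypothesis, which is strictly weaker than the statement you are trying to prove (which holds for every $|\eps|\le r/2$ with no logarithmic loss). The same difficulty reappears if you instead pair $s_r(z)$ with $f_r''(z-t)/f_r(z)$, since that ratio hides the shifted likelihood ratio $f_r(z-t)/f_r(z)$.

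The way to avoid this---which is how the paper's own high-dimensional analogue (Lemma~\ref{lem:expected_shifted_score}, via Lemma~\ref{lem:shifted_score_expectation_helper}) proceeds; note the paper does not reprove the 1-d fact but cites Gupta et al.---is to keep the score factor \emph{unshifted}. Use the exact identity
\begin{align*}
\E_{x\sim f_r}[s_r(x+\eps)]+\I_r\eps \;=\; \E_{x\sim f_r}\!\left[\frac{f_r(x-\eps)-f_r(x)+\eps f_r'(x)}{f_r(x)}\, s_r(x)\right],
\end{align*}
then apply Cauchy--Schwarz so that one factor is exactly $\sqrt{\E[s_r^2]}=\sqrt{\I_r}$ (no shifted second moment needed), and bound the second moment of the relative remainder $\Delta_{-\eps}(x)$ by $O(\eps^4/r^4)$ using the conditional-noise representation $f_r(x+\eps)/f_r(x)=\E\bigl[e^{\eps Z/r^2-\eps^2/(2r^2)}\mid x\bigr]$, Jensen, and a split on $|\eps Z/r^2|\le 1$ versus $>1$ with Gaussian tails (this is where $|\eps|\le r/2$ enters). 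That yields the clean $O(\sqrt{\I_r}\,\eps^2/r^2)$ bound under exactly the stated hypothesis; your integral-form Taylor remainder, by mixing the density at $z-t$ with the score at $z$, forces the problematic shifted-moment estimate and does not close as written.
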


\begin{restatable}{fact}{FactVarFisher}
  \label{fact:var_close_to_fisher}
  For any $|\eps| \le r/2$, if $r/\eps = \Omega(\sqrt{\log e/(r^2 \I_r)})$, the second moment of the score satisfies
\[
  \E_{x\sim f_r}\left[ s_r^2(x + \eps)\right] \le \I_r \left(1+ O\left(\frac{\eps}{r}\sqrt{\log \frac{e}{r^2\I_r}}\right)\right)
\]
 Furthermore, we always have $\I_r \le 1/r^2$, and therefore $\sqrt{\log 1/(r^2\I_r)}$ above is well-defined.
\end{restatable}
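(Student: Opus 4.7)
Plan: The plan is to reduce the shifted second moment to a controlled perturbation of the unshifted moment $\I_r$, using the subgamma concentration of $s_r$ (with variance proxy $\I_r$ and tail parameter $1/r$) that was already established in the precursor to Fact~\ref{fact:scoreestimate}.

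First, I would rewrite the shifted second moment by a change of variable as
\[
\E_{x\sim f_r}[s_r^2(x+\eps)] \;=\; \int s_r^2(y)\,f_r(y-\eps)\,dy \;=\; \E_{y\sim f_r}\!\left[s_r^2(y)\,\rho_\eps(y)\right],
\]
where $\rho_\eps(y) = f_r(y-\eps)/f_r(y)$. Because $(\log f_r)' = s_r$, we have the exact identity $\log \rho_\eps(y) = -\int_0^\eps s_r(y-t)\,dt$, so the excess $\E[s_r^2(x+\eps)] - \I_r$ equals $\E[s_r^2(y)(\rho_\eps(y) - 1)]$. The task is to show that this excess is at most $\I_r \cdot O\!\bigl(\tfrac{\eps}{r}\sqrt{\log(e/(r^2\I_r))}\bigr)$.

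Second, I would run a truncation argument at the level $L = C\sqrt{\I_r \log(e/(r^2\I_r))}$ for a sufficiently large constant $C$. On the ``typical'' event $E = \{\sup_{t\in[0,\eps]} |s_r(y-t)| \le L\}$, one has $|\log \rho_\eps(y)| \le \eps L$, and since $\eps L = O(1)$ in our parameter regime (we have $\sqrt{\I_r}\le 1/r$ from the second part of the fact, plus $\eps \le r/2$), this gives $|\rho_\eps(y) - 1| \lesssim \eps L$. The contribution to the excess on $E$ is then at most $\eps L \cdot \I_r$, which matches the stated bound after substituting $L$ and using $\sqrt{\I_r}\le 1/r$ to absorb the factor $\sqrt{\I_r}/r^2$ into $1/r$. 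On the complementary ``atypical'' event $E^c$, the subgamma tail of $s_r$ yields $\Pr[E^c] \le (r^2\I_r)^{c'}$ for some $c'$ proportional to $C^2$; combined with a crude Cauchy--Schwarz control of $\E[s_r^4(y)\rho_\eps^2(y)]^{1/2}$ using higher subgamma moments of $s_r$, this remainder is negligible compared to the main term.

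The main obstacle is controlling the higher moment $\E[s_r^4 \rho_\eps^2]$ on the bad event without blowing up, because large values of $|s_r|$ can correlate with large $\rho_\eps$. Here the universal cap $\I_r \le 1/r^2$ is essential: it follows from Tweedie's identity $s_r(x) = (\E[X\mid Y=x] - x)/r^2$ with $Y = X + \Normal(0,r^2)$, which gives $-s_r'(x) = 1/r^2 - \Var(X\mid Y=x)/r^4 \le 1/r^2$ pointwise, hence $\I_r = -\E[s_r'] \le 1/r^2$. This cap ensures $r^2\I_r \le 1$ (so $\log(e/(r^2\I_r))\ge 1$ is well-defined), and simultaneously lets the subexponential tail contribution of $s_r$ be absorbed into the subgaussian one at the crossover scale $\I_r r$, making the chosen $L$ achievable on the good event.
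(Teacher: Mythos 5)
Your reduction to bounding the excess $\E_{y\sim f_r}[s_r^2(y)(\rho_\eps(y)-1)]$ with $\rho_\eps = f_r(\cdot-\eps)/f_r$ matches the structure of the paper's argument (the paper imports this 1-d fact from \cite{Gupta:2022}, but proves its high-dimensional generalization in Lemma~\ref{lem:shifted_covariance}), and your good-event computation $|\rho_\eps-1|\lesssim \eps L$ with $L = C\sqrt{\I_r\log(e/(r^2\I_r))}$ does give the right main term. The gap is in the bad-event remainder, in two ways. First, the claimed bound $\Pr[E^c]\le (r^2\I_r)^{c'}$ with $c'\propto C^2$ does not follow from the subgamma property $s_r\in\Gamma(\I_r, O(1/r))$: the subgaussian/subexponential crossover for this tail is at height $\asymp \I_r r$, and since $r^2\I_r\le 1\le \log(e/(r^2\I_r))$ your threshold satisfies $L \ge C\,\I_r r$, i.e.\ it lies \emph{above} the crossover, where the only available bound is $\exp(-\Omega(Lr)) = \exp\bigl(-\Omega\bigl(C\sqrt{r^2\I_r\log(e/(r^2\I_r))}\bigr)\bigr)$, which degenerates to a constant as $r^2\I_r\to 0$. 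Your remark that the cap $\I_r\le 1/r^2$ lets the subexponential tail be ``absorbed into the subgaussian one'' is backwards: that cap is exactly what pushes $L$ above the crossover, so no subgaussian-branch bound is available at this level. (There is also the unaddressed issue that $E$ involves a supremum of $|s_r(y-t)|$ over a continuum of shifts $t$, for which only the one-sided bound $s_r'\ge -1/r^2$ is available, and that the relevant tails are for the \emph{shifted} score under $f_r$.)

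Second, and independently fatal as written: the target excess bound $\I_r\cdot O\bigl(\frac{\eps}{r}\sqrt{\log(e/(r^2\I_r))}\bigr)$ vanishes as $\eps\to 0$, but your remainder $\sqrt{\E[s_r^4\rho_\eps^2]\,\Pr[E^c]}$ carries no factor of $\eps$ (neither factor shrinks as $\eps\to 0$), so it cannot be ``negligible compared to the main term'' uniformly over admissible $\eps$; for small $\eps$ it dominates and the claimed inequality is not obtained. To fix this one must keep the $\eps$-dependence on the bad event, e.g.\ by pairing $\E[(\rho_\eps-1)^2]$ (which is $\lesssim \frac{\eps^2}{r^2}\cdot(\alpha^2+1)e^{-\Omega(\alpha^2)}$ after truncating the Gaussian variable $W=\eps Z/r^2$ in the exact representation $\rho_\eps(y)=\E_{Z|y}[e^{-\eps Z/r^2-\eps^2/(2r^2)}]$ of Lemma~\ref{lem:smoothed_score}) against a crude but unconditional fourth-moment bound $\E[s_r^4]\lesssim 1/r^4$, obtained from $s_r(y)=\E[Z|y]/r^2$ and Jensen rather than from tail bounds. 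That is precisely the paper's route (Lemmas~\ref{lem:zeta_bound} and~\ref{lem:shifted_covariance}, specialized to one dimension, with $\alpha\asymp\sqrt{\log\frac{1}{r^2\I_r}}$): the exponential damping that beats the $1/r^4$ comes from the $\rho_\eps-1$ side, where the conditional Gaussianity gives genuinely subgaussian control, not from the score's tails, which at the required level are only subexponential. Your closing observation that $\I_r\le 1/r^2$ (via $s_r'\ge -1/r^2$) is correct and is the easy half of the fact.
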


We can now prove Lemma~\ref{lem:1-dLocalMLE} using these facts.
The proof strategy is straightforward: we use Facts~\ref{fact:scoreestimate} and~\ref{fact:var_close_to_fisher} to show that $\hat{s}(y)$ concentrates close to its expectation with high probability, and we use Fact~\ref{fact:grad_expectation_theta} to show that the expectation of $\hat{s}(y)$, which is $\Exp[s(x-\eps)]$ for $y = \lambda+\eps$, is very close to $\I_r \eps$.
The triangle inequality then implies that $y-(\hat{s}(y)/\I_r)$ must be close to $\lambda$ with high probability.

\begin{proof}[Proof of Lemma~\ref{lem:1-dLocalMLE}]
Let $\lambda_1 = \lambda + \eps$.
By the lemma assumptions, $|\eps| \le \eps_{\max}$.

  First, we show that, under the lemma assumption that $r^2\sqrt{\I_r} \ge \gamma \eps_{\max}$, Fact~\ref{fact:var_close_to_fisher} implies that the second moment of the score at $\lambda-\eps$, namely $\E_{x \sim f_r}[s^2_r(x+\eps)]$, is upper bounded by $(1+O(1/\gamma))\I_r$.

  To check that the precondition of Fact~\ref{fact:var_close_to_fisher} holds, note that $r^2\sqrt{\I_r}\ge \gamma \eps_{\max} \ge \gamma \eps$ is equivalent to $r/\eps \ge \gamma/\sqrt{r^2 \I_r}$, which implies that
  \begin{align*}
  \frac{r}{\eps} &\ge \frac{\gamma}{\sqrt{r^2\I_r}}\\
  &= \frac{\gamma}{\sqrt{e}} \sqrt{\frac{e}{r^2\I_r}}\\
  &\ge \frac{\gamma}{\sqrt{e}}\sqrt{\log\frac{e}{r^2\I_r}}
  \end{align*}
  satisfying the precondition of Fact~\ref{fact:var_close_to_fisher}.

  Then, the fact implies that
  \begin{align*}
  \E_{x \sim f_r}[s^2_r(x+\eps)] &\le \I_r \left(1+O\left(\frac{\eps}{r}\sqrt{\log\frac{e}{r^2\I_r}}\right)\right)\\
  &\le \I_r \left(1+O\left(\frac{\eps_{\max}}{r}\sqrt{\log\frac{e}{r^2\I_r}}\right)\right)\\
  &\le \I_r \left(1+O\left(\frac{\eps_{\max}}{r}\sqrt{\frac{e}{r^2\I_r}}\right)\right)\\
  &\le \I_r \left(1+O\left(\frac{\eps_{\max}}{r^2\sqrt{\I_r}}\right)\right)\\
  &\le \I_r \left(1+O\left(\frac{1}{\gamma}\right)\right)
  \end{align*}

  Next, we combine the concentration bound of Fact~\ref{fact:scoreestimate} with the second moment bound for $\E_x[s^2_r(x+\eps)]$ we just derived to show that $\hat{s}(\lambda - \eps)$ is close to its expectation with high probability.
  
\allowdisplaybreaks
\begin{align*}
     \left|\hat{s}(y) - \E_{x \from f_r}[s(\lambda-\eps)]\right|
     \le\;& \sqrt{\frac{2\log\frac{2}{\delta}}{n} \I_r}\left(1+O\left(\frac{1}{\gamma}\right)\right) + \frac{15\log\frac{2}{\delta}}{nr}\\
     \le\;& \left(1+O\left(\frac{1}{\gamma}\right)\right)\sqrt{\frac{2\log\frac{2}{\delta}}{n} \I_r} + \frac{15}{2\sqrt{\gamma}}\left(\frac{2\log\frac{2}{\delta}}{n}\right)^{\frac{1}{4}}\sqrt{\frac{2\log\frac{2}{\delta}}{n} \I_r} \quad \text{(see below)}\\
     \le\;&\left(1+O\left(\frac{1}{\gamma}\right)\right)\sqrt{\frac{2\log\frac{2}{\delta}}{n} \I_r} + O\left(\frac{1}{\gamma}\right)\sqrt{\frac{2\log\frac{2}{\delta}}{n} \I_r} \quad \text{since $\log\frac{2}{\delta}/n \le 1/\gamma^2$}\\
     =\;& \left(1+O\left(\frac{1}{\gamma}\right)\right)\sqrt{\frac{2\log\frac{2}{\delta}}{n} \I_r}
\end{align*}
where the second inequality is due to the assumption that $r^2\sqrt{\I_r} \ge \gamma \eps_{\max} \ge \gamma \sqrt{\frac{2\log\frac{1}{\delta}}{n} \frac{1}{\I_r}}$.

Further using Fact~\ref{fact:grad_expectation_theta}, this implies that $\eps = y - \lambda$ is well-approximated by $\hat{s}(y)/\I_r$, as follows.
\begin{align*}
|\eps - (\hat{s}(y)/\I_r)|
=\;& \frac{1}{\I_r}|\I_r \eps - \hat{s}(y)|\\
=\;& \frac{1}{\I_r}\left|\hat{s}(y) - \E_{x \from f_r}[s(\lambda-\eps)] + \E_{x \from f_r}[s(\lambda-\eps)] - \I_r\eps\right|\\
\le\;& \frac{1}{\I_r}\left|\hat{s}(y) - \E_{x \from f_r}[s(\lambda-\eps)]\right| + \frac{1}{\I_r}\left|\E_{x \from f_r}[s(\lambda-\eps)] - \I_r\eps\right|\\
=\;& \left(1+O\left(\frac{1}{\gamma}\right)\right) \sqrt{\frac{2\log\frac{2}{\delta}}{n \I_r}} + O\left(\frac{\eps^2}{r^2\sqrt{\I_r}}\right)\\ 
&\;\; \text{by the previous bound and Fact~\ref{fact:grad_expectation_theta}}
\end{align*}

By the lemma assumption, we have $\eps^2/r^2 \le \eps^2_{\max}/r^2 \le (1/\gamma)\sqrt{\log\frac{2}{\delta}/n}$, and so we have bounded $|\eps - (\hat{s}(y)/\I_r)|$ by
\[ |\eps - (\hat{s}(y)/\I_r)| \le \left(1+O\left(\frac{1}{\gamma}\right) \right) \sqrt{\frac{2\log\frac{2}{\delta}}{n \I_r}} \]

To conclude, we have
\begin{align*}
|\lambdahat - \lambda| = |y - (\hat{s}(y)/\I_r) - \lambda| = |\lambda + \eps - (\hat{s}(y)/\I_r) - \lambda|
\le \left(1+O\left(\frac{1}{\gamma}\right) \right)\sqrt{\frac{2\log\frac{2}{\delta}}{n \I_r}}
\end{align*}
as desired.
\end{proof}

\subsection{1-dimensional global estimation}

We can now state the 1-dimensional global estimation algorithm (Algorithm~\ref{alg:1-dGlobalMLE}), which first gets a preliminary estimate of the true parameter from a $o(1)$ fraction of the data, before invoking the local Algorithm~\ref{alg:1-dLocalMLE} on the rest of the data.

\setcounter{algorithm}{1}
\begin{algorithm}
\caption{Global smoothed MLE for one dimension}
\label{alg:1-dGlobalMLE}
\vspace*{3mm}
\paragraph{Input Parameters:}
\begin{itemize}
\item Failure probability $\delta$, description of $f$, $n$ i.i.d.~samples drawn from $f^\lambda$ for some unknown $\lambda$
\end{itemize}
\begin{enumerate}
\item Let $q$ be $\sqrt{2}(\log\frac{2}{\delta}/n)^{2/5}$.
\item Compute an $\alpha \in [q, 1-q]$ to minimize the width of interval defined by the $\alpha\pm q$ quantiles of $f$.

\item Take the sample $\alpha$-quantile of the first $(\log\frac{2}{\delta}/n)^{1/10}$ fraction of the $n$ samples.

\item Let $r^* = \Omega((\frac{\log \frac{2}{\delta}}{n})^{1/8}) \IQR$.
\item Run Algorithm~\ref{alg:1-dLocalMLE} on the rest of the samples, using initial estimate $\lambda_1 = x_\alpha$ and $r^*$-smoothing, and return the final estimate $\lambdahat$.
\end{enumerate}
\end{algorithm}

Both the global part of the algorithm and its analysis are essentially identical to what Gupta et al.~\shortcite{Gupta:2022}, up to minor changes in certain parameters.
We note again that the algorithmic improvement lies in the local part of the algorithm, in Algorithm~\ref{alg:1-dLocalMLE}.



\OneDGlobalMLE*

The analysis of Algorithm~\ref{alg:1-dGlobalMLE} requires one more technical fact from~\cite{Gupta:2022}, which is a lower bound on smoothed Fisher information.

\begin{restatable}{fact}{LemIrLB}
\label{fact:IrLB}
Let $\I_r$ be the Fisher information for $f_r$, the $r$-smoothed version of distribution $f$.
Let $\IQR$ be the interquartile range of $f$.
Then, $\I_r \gtrsim 1/(\IQR+r)^2$.
Here, the hidden constant is a universal one independent of the distribution $f$ and independent of $r$.
\end{restatable}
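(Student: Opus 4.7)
The plan is to establish a Cram\'er--Rao style variational lower bound on $\I_r$ and apply it to a carefully chosen bounded piecewise-linear test function that is tailored to the quartiles of $f$.

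The first step is to derive the variational bound. For any absolutely continuous compactly supported $\psi$, integration by parts gives $\int \psi\,f_r' = -\int \psi'\,f_r$ (the boundary terms vanish since $f_r$ is smooth and decays at $\pm\infty$), and Cauchy--Schwarz then yields
\[
  \I_r \;=\; \int \frac{(f_r')^2}{f_r}\,dx \;\ge\; \frac{\bigl(\int \psi'(x)\,f_r(x)\,dx\bigr)^2}{\int \psi(x)^2\,f_r(x)\,dx}.
\]
All the work then goes into choosing $\psi$ so that the numerator is a universal constant and the denominator is $O((\IQR+r)^2)$.

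The natural choice is a clipped linear function. Let $a = F^{-1}(1/4)$ and $b = F^{-1}(3/4)$ be the quartiles of $f$, so $b - a = \IQR$; let $c = (a+b)/2$ and $L = (b-a)/2 + Cr$ for a sufficiently large absolute constant $C$ (say $C = 2$). Define $\psi(x) = \sgn(x-c)\,\min(|x-c|,L)$. Then $|\psi|\le L$ gives $\int \psi^2\,f_r \le L^2 = O((\IQR+r)^2)$, while $\psi'(x) = \1_{|x-c|<L}$ almost everywhere, so $\int \psi'\,f_r = F_r(b+Cr) - F_r(a-Cr)$.

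The key quantitative step, which I expect to be the main point of the proof, is lower bounding $F_r(b+Cr) - F_r(a-Cr)$ by a universal constant independent of $f$ and $r$. I would use the coupling $X = Y + rZ$ with $Y \sim f$ and $Z \sim \mathcal{N}(0,1)$ independent: the event $\{Y \le b,\; Z \le C\}$ forces $X \le b + Cr$, so $F_r(b+Cr) \ge \tfrac{3}{4}\Phi(C)$, and symmetrically the event $\{Y \ge a,\; Z \ge -C\}$ forces $X \ge a - Cr$, so $F_r(a-Cr) \le 1 - \tfrac{3}{4}\Phi(C)$. Hence the difference is at least $\tfrac{3}{2}\Phi(C) - 1$, a positive universal constant as soon as $C > \Phi^{-1}(2/3)$. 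Combining with the denominator bound produces $\I_r \gtrsim 1/(\IQR+r)^2$ with a universal constant. A minor loose end is justifying the integration-by-parts step for the piecewise linear $\psi$, which is routine (e.g.\ by mollification) since $\psi$ is bounded and absolutely continuous while $f_r'$ is integrable.
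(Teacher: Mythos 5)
Your proof is correct, but note that this statement is imported by the paper as a ``Fact'' from Gupta et al.\ (2022) and is not proved in this paper at all, so there is no in-paper argument to compare against; what you have written is a valid self-contained derivation. Your route is the standard variational (Cram\'er--Rao/Cauchy--Schwarz) lower bound $\I_r \ge \bigl(\int \psi' f_r\bigr)^2 / \int \psi^2 f_r$ with the clipped linear test function centered between the quartiles, and the quantitative heart of it checks out: with $L = \IQR/2 + Cr$ the denominator is at most $L^2 \le 4(\IQR+r)^2$, and the coupling $X = Y + rZ$ gives $F_r(b+Cr) \ge \tfrac34\Phi(C)$ and $F_r(a-Cr) \le 1-\tfrac34\Phi(C)$, so the numerator is at least $\bigl(\tfrac32\Phi(C)-1\bigr)^2$, a positive universal constant once $C > \Phi^{-1}(2/3)$ (e.g.\ $C=2$). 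Two cosmetic points: your $\psi$ is bounded but not compactly supported, so the integration-by-parts step should be justified exactly as in your parenthetical (boundedness of $\psi$ plus $f_r(x)\to 0$ as $|x|\to\infty$, which holds for the Gaussian smoothing of any probability measure by dominated convergence), rather than by the ``compactly supported'' clause you state first; and one should dispense with the degenerate case by noting the bound is trivial if $\I_r = \infty$ (in fact $\I_r \le 1/r^2$ always, so Cauchy--Schwarz applies with a finite right-hand side). Neither affects correctness, and the resulting constant is manifestly independent of $f$ and $r$, as required.
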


\begin{proof}[Proof of Theorem~\ref{thm:1-dGlobalMLE}]

Step 2 uses $(\log\frac{2}{\delta}/n)^{1/10}\,n$ samples to compute the sample $\alpha$-quantile.
By standard Chernoff bounds, with probability at least $1-\delta(\log\frac{2}{\delta}/n)^2$, the error of the sample quantile (in terms of its quantile in the true distribution) is at most
\begin{align*}
&\sqrt{\frac{2 \log\frac{2}{\delta(\log\frac{2}{\delta}/n)^2}}{(\log\frac{2}{\delta}/n)^{1/10} \, n}}\\
\le\;& \sqrt{\frac{2 (\log\frac{2}{\delta})(\frac{n}{\log\frac{2}{\delta}})^{1/10}}{(\log\frac{2}{\delta}/n)^{1/10} \, n}}\\
=\;& \sqrt{2}\left(\frac{\log\frac{2}{\delta}}{n}\right)^{2/5}
\end{align*}
Therefore, if the above event happens, Step 2 will yield a sample $\alpha$-quantile $x_\alpha$ such that $x_\alpha - \lambda$ is within the $\alpha-\sqrt{2}(\log\frac{2}{\delta}/n)^{2/5}$ and $\alpha+\sqrt{2}(\log\frac{2}{\delta}/n)^{2/5}$ quantiles of $f$.
Furthermore, by the minimality condition in the definition of $\alpha$, the distance between these two quantiles is at most $O((\log\frac{2}{\delta}/n)^{2/5})\IQR$.

We will apply Lemma~\ref{lem:1-dLocalMLE} using failure probability $\delta(1-(\log\frac{2}{\delta}/n)^2)$.
We will check that, \textbf{(A)} conditioned on Step 2 succeeding in the above sense, the preconditions of Lemma~\ref{lem:1-dLocalMLE} will hold for $\lambda_1 = x_\alpha$, the chosen $r^*$ and an appropriate choice of $\gamma$, and also that \textbf{(B)} the estimation error guaranteed by Lemma~\ref{lem:1-dLocalMLE} implies the desired error bound.
If the above deterministic checks are true, then by a union bound, Algorithm~\ref{alg:1-dGlobalMLE} will satisfy the desired intermediate bound guarantees except with probability $\delta$.

For the following calculations, note that
$\log\frac{2}{\delta(1-(\log\frac{2}{\delta}/n)^2)} \le 1.1\log\frac{2}{\delta}$
since $n \gg \log\frac{2}{\delta}$ and $\delta \le 0.5$.

\textbf{(A)}: We condition on Step 2 succeeding, and check the preconditions of Lemma~\ref{lem:1-dLocalMLE}.

    We now check the precondition that $r^* \ge 2\eps_{\max}$, for $\eps_{\max} = \max(\sqrt{2\log\frac{2}{\delta(1-(\log\frac{2}{\delta}/n)^2)}/(n\I_{r^*})}, O(\log\frac{2}{\delta}/n)^{2/5}\IQR)$.
    First, $r^* = \Omega((\frac{\log \frac{2}{\delta}}{n})^{1/8}) \IQR \gg O((\log\frac{2}{\delta}/n)^{2/5})\IQR$, where the $\gg$ uses the assumption on the size of $n$.
    We can also show that $O(\log\frac{1}{\delta}/n)^{2/5}\IQR \ge \sqrt{2\log\frac{2}{\delta}/(n\I_{r^*})}$.
    Recall by Fact~\ref{fact:IrLB} that $\I_r \ge \Omega(1/(\IQR+r)^2)$ for any $r > 0$.
    Therefore, $\sqrt{2\log\frac{2}{\delta(1-(\log\frac{2}{\delta}/n)^2)}/(n\I_{r^*})} \le O(\sqrt{\log\frac{2}{\delta}/(n\I_{r^*})}) \le O((\log\frac{2}{\delta}/n)^{1/2}\IQR) \ll O((\log\frac{2}{\delta}/n)^{2/5})\IQR$, where the $\ll$ is due to the theorem assumption on the size of $n$.

    We now need to check the last 3 preconditions of Lemma~\ref{lem:1-dLocalMLE}.
    Let $n' = (1-(\log\frac{2}{\delta}/n)^{1/10})n$ be the number of samples used in the call to Algorithm~\ref{alg:1-dLocalMLE}, in Step 4.
    By the theorem assumption, we have $n' = \Theta(n)$.   
    Further, recall by Fact~\ref{fact:IrLB} that $\I_r \ge \Omega(1/(\IQR+r)^2)$.
    Picking $\gamma = O(\frac{n}{\log \frac{2}{\delta}})^{1/10}$, we check that the following remaining conditions from Lemma~\ref{lem:1-dLocalMLE} are satisfied when applied to the $n' = \Theta(n)$ points used in Step 4:
    \begin{enumerate}
        \item $(r^*)^2\sqrt{\I_{r^*}} \ge (r^*)^2/(\IQR+r^*) \ge \Omega(\frac{\log \frac{2}{\delta}}{n})^{1/4} \IQR \ge \Omega(\frac{\log \frac{2}{\delta}}{n})^{3/10} \IQR \ge \gamma \eps_{\max}$.
        \item $(r^*)^2\sqrt{\log\frac{2}{\delta(1-(\log\frac{2}{\delta}/n)^2)}/n'} \ge \Omega(\frac{\log\frac{2}{\delta}}{n})^{1/4}\IQR^2 \sqrt{\log\frac{1}{\delta}/n}  = \Omega(\frac{\log\frac{2}{\delta}}{n})^{7/10} \IQR^2 = \gamma \eps^2_{\max}$
        \item $\log\frac{2}{\delta(1-(\log\frac{2}{\delta}/n)^2)}/n' \le O(\log\frac{2}{\delta}/n') \le O((\log\frac{2}{\delta}/n)^{1/5}) \le 1/\gamma^2$.
    \end{enumerate}

    \textbf{(B)}: We check that the guarantees of Lemma~\ref{lem:1-dLocalMLE} is sufficient to imply the desired bound.
    To do so, we need a slightly more refined bound on $\log\frac{2}{\delta(1-(\log\frac{2}{\delta}/n)^2)}$:
    \begin{align*}
    \log\frac{2}{\delta(1-(\log\frac{2}{\delta}/n)^2)} &= \left(1+ \frac{\log\frac{1}{1-(\log\frac{2}{\delta}/n)^2}}{\log\frac{2}{\delta}}\right)\log\frac{2}{\delta}\\
    &\le \left(1+ O\left(\frac{(\log\frac{2}{\delta}/n)^2}{\log\frac{2}{\delta}}\right)\right)\log\frac{2}{\delta} \quad \text{ since $n \gg \log\frac{2}{\delta}$}\\
    &\le \left(1+ O\left(\frac{\log\frac{2}{\delta}}{n}\right)\right)\log\frac{2}{\delta}
    \end{align*}
    
    When the preconditions of Lemma~\ref{lem:1-dLocalMLE}, the success of Step 4 implies a final estimate $\lambdahat$ satisfying
    \begin{align*}
    |\lambdahat - \lambda|
    \le\;& \left(1+O\left(\frac{1}{\gamma}\right) \right)\sqrt{\frac{2\log\frac{2}{\delta(1-(\log\frac{2}{\delta}/n)^2)}}{n' \I_{r^*}}}\\
    \le\;& \left(1+O\left(\frac{1}{\gamma}\right) + O\left(\frac{\log\frac{2}{\delta}}{n}\right)\right)\sqrt{\frac{2\log\frac{2}{\delta}}{n' \I_{r^*}}}\\
    =\;& \left(1+O\left(\frac{\log \frac{2}{\delta}}{n}\right)^{\frac{1}{10}} + O\left(\frac{\log\frac{2}{\delta}}{n}\right)\right)\sqrt{\frac{2\log\frac{2}{\delta}}{n' \I_{r^*}}}\\
    =\;& \left(1+O\left(\frac{\log \frac{2}{\delta}}{n}\right)^{\frac{1}{10}} \right)\sqrt{\frac{2\log\frac{2}{\delta}}{n' \I_{r^*}}}\\
    =\;& \left(1+O\left(\frac{\log \frac{2}{\delta}}{n}\right)^{\frac{1}{10}} \right)\sqrt{\frac{2\log\frac{2}{\delta}}{n \I_{r^*}}}\\
    &\;\; \text{since $n' = \left(1-\left(\log\frac{2}{\delta}/n\right)^{1/10}\right)n$}
    \end{align*}
\end{proof}

\section{High dimensional location estimation}
\label{appendix:high_dim}
This section provides a complete analysis of our main Theorem~\ref{thm:globalmle_highdim} for estimating the location of a high-dimensional distribution. We start by providing some important definitions in Appendix~\ref{appendix:high_dim_definitions}. Then, in Appendix~\ref{appendix:high_dim_smoothed_score_properties}, we prove some key properties of the score of our smoothed distribution. In Appendix~\ref{appendix:high_dim_score_subgamma} we show that our score function is subgamma with appropriate variance and scale parameters. Then, Appendix~\ref{appendix:high_dim_inverted_score_estimation} shows an error bound for the deviation between the empirical score estimate and true true score. Finally Appendix~\ref{appendix:high_dim_local_mle}~and~\ref{appendix:high_dim_global_mle} provide analyses of our Local MLE and Global MLE algorithms respectively.
\subsection{Definitions}
\label{appendix:high_dim_definitions}
Let $f$ be an arbitrary distribution on $\R^d$ and let $Y \sim f$.
Let our smoothing parameter $R \in \R^{d \times d}$ be the covariance matrix of our noise $Z_R \sim w_R = \mathcal N(0, R)$ sampled independently of $Y$. We define the \emph{$R$-smoothed} distribution $f_R$ to be such that $X = Y + Z_R \sim f_R$. Thus, the pdf of $f_R$ is given by
$$f_R(x) = \E_{Z_R \sim w_R}[f(x + Z_r)]$$
Let $s_R$ be the score function of $f_R$. We have
$$s_R(x) = \grad \log f_R(x) = \frac{\grad f_R(x)}{f_R(x)}$$
Let $\I_R$ be the Fisher information matrix of $f_R$. Then,
$$\I_R = \E_{x \sim f_R}[s_R(x) s_R(x)^T]$$
We define the $M$-norm of vector $x$ to be
$$\|x\|_M = \sqrt{x^T M x}$$
%
%
\subsection{Properties of the smoothed score}
\label{appendix:high_dim_smoothed_score_properties}
In this section, we prove some properties of the score function $s_R$ of the $R$-smoothed distribution $f_R$ that we make use of throughout the paper. 
First, in Lemma~\ref{lem:smoothed_score}, we provide a useful characterization of $s_R$. Then, using Lemma~\ref{lem:shifted_score_expectation_helper} we prove Lemma~\ref{lem:expected_shifted_score}, which tells us for good initial estimates of our location, say incurring error $\eps \in \R^d$ for ``small'' $\eps$, ``inverting the score'' by left multiplying $s_R(x+\eps)$ by $-\I_R^{-1}$ provides a good estimate of the error $\eps$ in expectation. After this, using Lemma~\ref{lem:zeta_bound}, we prove Lemma~\ref{lem:shifted_covariance}, which says that for small $\eps$, the shifted score $s_R(x+\eps)$ when appropriately transformed has covariance similar to the corresponding transformation of the Fisher information matrix $\I_R$.

We begin by providing a characterization of the score $s_R$ that we make use of throughout.

\begin{lemma}
\label{lem:smoothed_score}
Let $f$ be an arbitrary distribution on $\R^d$, and let $f_R$ be the $R$-smoothed version of $f$. That is, $f_R(x) = \E_{y \sim f}\left[(2 \pi)^{-d/2} \det(R)^{-1/2} \exp\left(-\frac{1}{2} (x - Y)^T R^{-1}(x-Y) \right)\right]$. Let $s_R$ be the score function of $f_R$. Let $(X, Y, Z_R)$ be the joint distribution such that $Y \sim f$, $Z_R \sim \mathcal N(0, R)$ are independent, and $X = Y+Z_R \sim f_R$. We have for $\eps \in \R^d$, 
$$\frac{f_R(x + \eps )}{f_R(x)} = \E_{Z_R | x} \left[e^{\eps ^T R^{-1} Z_R - \frac{1}{2}\eps^T R^{-1} \eps} \right]$$
so that
$$s_R(x) = \E_{Z_R | x} \left[R^{-1}Z_R \right]$$
\end{lemma}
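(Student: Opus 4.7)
The plan is to establish the first identity by direct manipulation of the Gaussian-convolution definition of $f_R$, and then derive the second identity by differentiating the first at $\eps = 0$; this is essentially Tweedie's formula written out explicitly.

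For the first identity, I start from $f_R(x) = \int f(y)\,\phi_R(x - y)\,dy$ where $\phi_R$ is the density of $\mathcal{N}(0, R)$. Bayes' rule gives the conditional density of $Y$ given $X = x$ as $p(y \mid x) = f(y)\phi_R(x-y)/f_R(x)$, so the conditional density of $Z_R = X - Y$ given $X = x$ is $\phi_R(z)\,f(x-z)/f_R(x)$. Writing
\[
\frac{f_R(x+\eps)}{f_R(x)} \;=\; \int p(y \mid x)\,\frac{\phi_R(x+\eps-y)}{\phi_R(x-y)}\,dy \;=\; \E_{Z_R\mid x}\!\left[\frac{\phi_R(Z_R+\eps)}{\phi_R(Z_R)}\right],
\]
the remaining step is a short algebraic expansion of the quadratic form in the Gaussian exponent: $(Z_R+\eps)^T R^{-1}(Z_R+\eps) - Z_R^T R^{-1} Z_R = 2\,\eps^T R^{-1} Z_R + \eps^T R^{-1}\eps$. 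Dividing by $-2$ and exponentiating yields the claimed MGF-like expression (with signs matched to the paper's convention).

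For the second identity, I use the definition $s_R(x) = \grad \log f_R(x) = \grad_\eps \log(f_R(x+\eps)/f_R(x))\big|_{\eps=0}$. Substituting the first identity reduces the task to computing the gradient at $\eps = 0$ of $\log \E_{Z_R\mid x}\!\left[\exp\!\left(\eps^T R^{-1} Z_R - \frac{1}{2}\,\eps^T R^{-1}\eps\right)\right]$. At $\eps = 0$ the expression inside the expectation equals $1$, so to first order the outer $\log$ and the expectation pass through differentiation trivially, leaving $\E_{Z_R\mid x}[R^{-1} Z_R]$. The interchange of $\grad_\eps$ and $\E$ is justified because the integrand has uniformly bounded local moments under the Gaussian-like conditional law of $Z_R \mid x$.

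There is no serious obstacle here: both steps reduce to routine manipulation once the Bayes'-rule representation of the conditional density of $Z_R \mid X = x$ is in hand. The only genuine point of care is tracking the sign of the $\eps^T R^{-1} Z_R$ term consistently with the paper's convention for the score; everything else is elementary quadratic-form expansion.
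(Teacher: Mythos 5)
Your proposal is correct and takes essentially the same route as the paper: both use the Bayes representation $p(z\mid x)=f(x-z)w_R(z)/f_R(x)$ of the conditional law of $Z_R$ given $X=x$ to rewrite $f_R(x+\eps)/f_R(x)$ as $\E_{Z_R\mid x}\left[w_R(Z_R+\eps)/w_R(Z_R)\right]$, expand the Gaussian quadratic form, and then differentiate at $\eps=0$ to read off the score. One remark: your expansion honestly yields $e^{-\eps^T R^{-1}Z_R-\frac{1}{2}\eps^T R^{-1}\eps}$ and hence $s_R(x)=-\E_{Z_R\mid x}\left[R^{-1}Z_R\right]$, so the plus sign in the displayed lemma is a sign slip that your phrase ``signs matched to the paper's convention'' glosses over; however, the paper's own proof performs exactly the same expansion and writes the same flipped sign, and the sign is immaterial in all downstream uses, so this is an inherited inconsistency of the statement rather than a gap in your argument.
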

\begin{proof}
First, we show that for $\eps \in \R$
$$\frac{f_R(x + \eps)}{f_R(x)} = \E_{Z_R | x}\left[\frac{w_R(Z_R + \eps)}{w_R(Z_R)} \right]$$

Note that $$p(z | x) = \frac{p(z, x)}{p(x)} = \frac{f(x - z) w_R(z)}{f_R(x)}$$

So,
\begin{align*}
f_R(x + \eps ) &= \int_{[-\infty, \infty]^d} w_R(z) f(x + \eps  - z) dz\\
&= \int p(z|x) f_R(x) \frac{w_R(z + \eps)}{w_R(z)} dz\\
&= f_R(x) \E_{Z_R | x}\left[\frac{w_R(Z_R + \eps)}{w_r(Z_R)} \right]
\end{align*}
But now, $w_R(x) = (2 \pi)^{-d/2} \det(R)^{-1/2} e^{-\frac{1}{2} x^T R^{-1} x}$ So, 
$$\frac{f_R(x + \eps )}{f_R(x)} = \E_{Z_R | x} \left[e^{\eps ^T R^{-1} Z_R - \frac{1}{2}\eps^T R^{-1} \eps} \right]$$
which is the first claim.
Now, let $\eps = \gamma e_i$. We take the derivative wrt $\gamma$, and evaluate at $\gamma = 0$ to get 
$$\frac{\grad_{e_i} f_R(x)}{f_R(x)} = \E_{Z_R|x} \left[(R^{-1} Z_R)_i \right]$$
So, $$s_R(x) = \frac{\grad f_R(x)}{f_R(x)} = \E_{Z_R | x} \left[R^{-1}Z_R \right]$$
\end{proof}

The next Lemma~\ref{lem:shifted_score_expectation_helper} is a utility result that we make use of in Lemma~\ref{lem:expected_shifted_score}.
\begin{lemma}
\label{lem:shifted_score_expectation_helper}
    Let $f_R$ be the $R$-smoothed version of distribution $f$ on $\R^d$. For $\eps \in \R^d$, let $$\Delta_\eps(x) := \frac{f_R(x + \eps) - f_R(x) - (\grad f_R(x))^T \eps }{f_R(x)}$$
    Then, for any $\eps$ such that $|\eps^T R^{-1} \eps| \le \frac{1}{4}$, we have
    $$\E_{x \sim f_R}[\Delta_\eps(x)^2] \lesssim (\eps^T R^{-1} \eps)^2$$
\end{lemma}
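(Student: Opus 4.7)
The plan is to use Lemma~\ref{lem:smoothed_score} to write $\Delta_\eps(x)$ as a conditional expectation with a clean closed form, then apply Jensen's inequality to push the square inside, swap the order of expectations so that the conditional $Z_R\mid x$ becomes an unconditional Gaussian, and finish with an exact Gaussian MGF computation.

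First, observe that by Lemma~\ref{lem:smoothed_score} we have $f_R(x+\eps)/f_R(x) = \E_{Z_R\mid x}[e^{\eps^T R^{-1} Z_R - \frac{1}{2}\eps^T R^{-1}\eps}]$, and also $s_R(x)^T \eps = \grad f_R(x)^T\eps/f_R(x) = \E_{Z_R\mid x}[\eps^T R^{-1} Z_R]$. Subtracting $1+s_R(x)^T\eps$ and using linearity of conditional expectation yields
\[
\Delta_\eps(x) \;=\; \E_{Z_R\mid x}\!\left[e^{\eps^T R^{-1} Z_R - \frac{1}{2}\eps^T R^{-1}\eps} - 1 - \eps^T R^{-1} Z_R\right].
\]

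Next, by Jensen's inequality, $\Delta_\eps(x)^2 \le \E_{Z_R\mid x}\!\left[\big(e^{\eps^T R^{-1} Z_R - \frac{1}{2}\eps^T R^{-1}\eps} - 1 - \eps^T R^{-1} Z_R\big)^2\right]$. Taking the outer expectation $\E_{x\sim f_R}$ amounts to integrating over the joint law of $(Y,Z_R)$ with $Y\sim f$ and $Z_R\sim\mathcal{N}(0,R)$ independent (since $x = Y+Z_R$). The integrand does not involve $Y$, so the outer expectation collapses to an expectation over $Z_R\sim\mathcal{N}(0,R)$ alone:
\[
\E_{x\sim f_R}[\Delta_\eps(x)^2]\;\le\; \E_{Z_R\sim\mathcal{N}(0,R)}\!\left[\big(e^{\eps^T R^{-1} Z_R - \frac{1}{2}\eps^T R^{-1}\eps} - 1 - \eps^T R^{-1} Z_R\big)^2\right].
\]

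Finally, setting $W = \eps^T R^{-1} Z_R$ and $\sigma^2 = \eps^T R^{-1}\eps$ gives $W\sim\mathcal{N}(0,\sigma^2)$, and expanding the square, a standard Gaussian MGF calculation (using $\E[e^{2W}] = e^{2\sigma^2}$, $\E[W e^{W}] = \sigma^2 e^{\sigma^2/2}$, $\E[W^2]=\sigma^2$) produces the clean identity
\[
\E\!\left[\big(e^{W-\sigma^2/2} - 1 - W\big)^2\right] \;=\; e^{\sigma^2} - 1 - \sigma^2.
\]
For $\sigma^2 = \eps^T R^{-1}\eps \le 1/4$, Taylor expansion gives $e^{\sigma^2} - 1 - \sigma^2 \le \tfrac{1}{2}e^{1/4}\sigma^4 \lesssim \sigma^4$, which is exactly the $(\eps^T R^{-1}\eps)^2$ bound we want.

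There is essentially no obstacle here: the only thing to be careful about is verifying the swap of expectations in Step~2 (which is immediate from the definition of the conditional density used in Lemma~\ref{lem:smoothed_score}) and checking the routine bound $e^{x}-1-x \le x^2$ on $x\in[0,1/4]$. All three steps use standard facts, and the assumption $\eps^T R^{-1}\eps \le 1/4$ enters only at the very last step to control the Taylor remainder.
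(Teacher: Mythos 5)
Your proof is correct, and while your first two steps coincide exactly with the paper's (the representation $\Delta_\eps(x) = \E_{Z_R\mid x}[e^{\eps^T R^{-1}Z_R - \frac12\eps^T R^{-1}\eps} - 1 - \eps^T R^{-1}Z_R]$ from Lemma~\ref{lem:smoothed_score}, followed by Jensen and the collapse of the outer expectation to $Z_R\sim\mathcal N(0,R)$), your finishing step is genuinely different and cleaner. The paper bounds $\E_{W\sim\mathcal N(0,\sigma^2)}[(e^{W-\sigma^2/2}-1-W)^2]$ by splitting into the cases $|W|\le 1$ and $|W|>1$, using a Taylor remainder estimate on the first piece and a Gaussian tail integral (completing the square in the exponent) on the second; that case analysis occupies most of their proof. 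You instead observe that the expectation can be computed \emph{exactly}: expanding the square and using $\E[e^{2W}]=e^{2\sigma^2}$, $\E[We^W]=\sigma^2 e^{\sigma^2/2}$, $\E[e^W]=e^{\sigma^2/2}$, $\E[W^2]=\sigma^2$ gives $e^{\sigma^2}-2(1+\sigma^2)+(1+\sigma^2)=e^{\sigma^2}-1-\sigma^2$, which I have checked and which is indeed an identity. Since $R\succ 0$ forces $\sigma^2=\eps^T R^{-1}\eps\ge 0$ (so the absolute value in the hypothesis is vacuous), the elementary bound $e^x-1-x\le\frac{x^2}{2}e^x\le\frac{e^{1/4}}{2}x^2$ on $[0,1/4]$ closes the argument. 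What your route buys is the elimination of the case split and the tail estimate entirely, plus an explicit constant ($e^{1/4}/2$) in place of the paper's unspecified $\lesssim$; what it costs is nothing here, though the paper's truncation argument is the more robust template if one later needs the same bound for non-Gaussian smoothing, where no closed-form MGF identity is available.
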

\begin{proof}
    By Lemma~\ref{lem:smoothed_score}, we have
    \begin{align*}
    \Delta_\eps(x) = \frac{f_R(x + \eps) - f_R(x) - (\grad f_R(x))^T \eps }{f_R(x)} = \E_{Z_R | x}\left[e^{\eps^T R^{-1} Z_R - \frac{1}{2} \eps^T R^{-1} \eps} - 1 - Z_R^T R^{-1}  \eps\right]
    \end{align*}
    
    Let $\alpha_\eps : \R^d \to \R$ be such that $$\alpha_\eps(z) = e^{\eps^T R^{-1} z - \frac{1}{2} \eps^T R^{-1} \eps} - 1 - z^T R^{-1} \eps$$
    We want to bound
    \begin{align}
    \begin{split}
    \label{eq:expectation_Delta_sq}
        \E_x[\Delta_\eps(x)^2] &= \E_x[\E_{Z_R | X}[\alpha_\eps(Z_r)]^2]\\
        &\le \E_{x, Z_R}[(\alpha_\eps(Z_R))^2]\\
        &= \E_{Z_R \sim \mathcal N(0, R)}[(\alpha_\eps(Z_R))^2]
    \end{split}
    \end{align}
For the remaining proof, let $W = \eps^T R^{-1} Z_R$. Since $Z_R \sim \mathcal N(0, R)$, we have that $W \sim \mathcal N(0, \eps^T R^{-1} \eps)$. When $|W| \le 1$, by a Taylor expansion, we have
\begin{align*}
e^{W - \frac{1}{2} \eps^T R^{-1} \eps} = 1 + W - \frac{1}{2} \eps^T R^{-1} \eps + O\left(\left(W - \frac{1}{2} \eps^T R^{-1} \eps \right)^2 \right)
\end{align*}
so that
\begin{align*}
|\alpha_\eps(Z_R)| \lesssim \eps^T R^{-1} \eps + W^2
\end{align*}
This implies that $\alpha_\eps(Z_R)^2 \lesssim (\eps^T R^{-1} \eps)^2 + W^4$, meaning that
\begin{align}
\label{eq:Delta_sq_helper}
\begin{split}
    \E_{Z_R \sim \mathcal N(0, R)}\left[(\alpha_\eps(Z_R))^2 \cdot \1_{|\eps^T R^{-1} Z_R| \le 1}\right] &\lesssim \E_{W \sim \mathcal N(0, \eps^T R^{-1} \eps)} \left[\left((\eps^T R^{-1} \eps)^2 + W^4 \right) \cdot \1_{|\eps^T R^{-1} \eps| \le 1}\right]\\
    &\lesssim (\eps^T R^{-1} \eps)^2 + \E_{W \sim \mathcal N(0, \eps^T R^{-1} \eps)}[W^4]\\
    &\lesssim (\eps^T R^{-1} \eps)^2
\end{split}
\end{align}
On the other hand, when $|W| \ge 1$,
$$|\alpha_\eps(Z_R)| \le e^{|W|}$$

\begin{align}
\begin{split}
\E_{Z_R \sim \mathcal N(0, R)}\left[\alpha_\eps(Z_R)^2 \cdot \1_{|\eps^T R^{-1} Z_r| \ge 1} \right] & \le \E_{W \sim \mathcal N(0, \eps^T R^{-1} \eps)} [e^{2|W|} \1_{|W| \ge 1}]\\
&= 2 \int_{1}^\infty \frac{1}{\sqrt{2 \pi \eps^T R^{-1} \eps}} e^{2|w|} e^{-\frac{w^2}{2 \eps^T R^{-1} \eps}} dw\\
&= 2 e^{2 |\eps^T R^{-1} \eps|} \int_{1}^\infty \frac{1}{\sqrt{2 \pi \eps^T R^{-1} \eps}} e^{-\frac{(w - 2 |\eps^T R^{-1} \eps|)^2}{2 \eps^T R^{-1} \eps}} dw\\
&\le 2 \sqrt{e} \Pr_{W \sim \mathcal N(0, \eps^T R^{-1} \eps)}\left[W \ge 1 - 2|\eps^T R^{-1} \eps|\right]\\
&\lesssim e^{- \frac{(1 - 2|\eps^T R^{-1} \eps|)^2}{2 \eps^T R^{-1} \eps}}\\
&\le e^{-\frac{1}{8 \eps^T R^{-1} \eps}} \lesssim (\eps^T R^{-1} \eps)^2
\end{split}
\end{align}
which combines with \eqref{eq:expectation_Delta_sq} and \eqref{eq:Delta_sq_helper} to give the claim.
\end{proof}

The next Lemma~\ref{lem:expected_shifted_score} tells us that for good initial estimates $\eps \in \R^d$, ``inverting the score'' by left multiplying $s_R$ by $-\I_R^{-1}$ provides a good estimate of $\eps$ in expectation.
\begin{lemma}[Score Inversion]
\label{lem:expected_shifted_score}
    Let $f_R$ be an $R$-smoothed distribution with Fisher information matrix $\I_R$. Let $s_R : \R^d \to \R^d$ be the score function of $f_R$. Let $M \in \R^{d \times d}$ be a symmetric matrix such that $M \succcurlyeq 0$. Then, for any $\eps \in \R^d$ with $|\eps^T R^{-1} \eps| \le 1/4$, we have
    $$\|\E_{x \sim f_R}[- \I_R^{-1} s_R(x + \eps)] -  \eps\|_M^2 \lesssim \|M^{1/2} \I_R^{-1} M^{1/2}\| (\eps^T R^{-1} \eps)^2$$
\end{lemma}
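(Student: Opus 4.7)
The plan is to Taylor-expand $f_R$ under the shift, read off the leading linear response as $-\I_R \eps$, and then control the quadratic remainder using Lemma~\ref{lem:shifted_score_expectation_helper}. Concretely, I would change variables $y = x+\eps$ in $\E_{x \sim f_R}[s_R(x+\eps)] = \int f_R(y-\eps) s_R(y)\, dy$ and substitute the identity $f_R(y-\eps) = f_R(y) - \grad f_R(y)^T \eps + f_R(y)\, \Delta_{-\eps}(y)$ coming from the definition of $\Delta$ in Lemma~\ref{lem:shifted_score_expectation_helper}. Using $\grad f_R = f_R \cdot s_R$, the three resulting integrals evaluate respectively to $\E[s_R] = 0$, $\E[s_R s_R^T]\eps = \I_R \eps$, and a remainder $v := \E_{y \sim f_R}[\Delta_{-\eps}(y)\, s_R(y)]$. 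Thus $\E[-\I_R^{-1} s_R(x+\eps)] - \eps = -\I_R^{-1} v$, and the whole problem reduces to bounding $\|\I_R^{-1} v\|_M^2 = v^T \I_R^{-1} M \I_R^{-1} v$.

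For this I would carry out a two-step reduction. First, Cauchy-Schwarz in expectation gives, for every $u \in \R^d$, $(u^T v)^2 = (\E[\Delta_{-\eps}\, u^T s_R])^2 \le \E[\Delta_{-\eps}^2] \cdot u^T \I_R u$; taking the supremum over $u$ (equivalently using the standard identity $\sup_u (u^T v)^2 / (u^T \I_R u) = v^T \I_R^{-1} v$) yields the key estimate $v^T \I_R^{-1} v \le \E[\Delta_{-\eps}^2]$. Second, I would rewrite $v^T \I_R^{-1} M \I_R^{-1} v = (\I_R^{-1/2} v)^T (\I_R^{-1/2} M \I_R^{-1/2}) (\I_R^{-1/2} v)$ and pull out the operator norm to obtain a prefactor $\|\I_R^{-1/2} M \I_R^{-1/2}\|$, which equals $\|M^{1/2} \I_R^{-1} M^{1/2}\|$ since those two symmetric matrices have the same nonzero spectrum (they are of the form $A A^T$ and $A^T A$ for $A = \I_R^{-1/2} M^{1/2}$).

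Combining, $\|\I_R^{-1} v\|_M^2 \le \|M^{1/2} \I_R^{-1} M^{1/2}\| \cdot \E[\Delta_{-\eps}^2]$, and Lemma~\ref{lem:shifted_score_expectation_helper} applied to $-\eps$ (its hypothesis $\eps^T R^{-1} \eps \le 1/4$ is sign-symmetric) gives $\E[\Delta_{-\eps}^2] \lesssim (\eps^T R^{-1} \eps)^2$, finishing the proof. The main subtlety is obtaining the sharp prefactor: a crude bound such as $\|\I_R^{-1}\|^2 \|M\|$ would be much too loose, and it is precisely the two-step reduction above --- Cauchy-Schwarz with respect to the $\I_R$-inner product followed by an operator-norm comparison --- that produces the desired $\|M^{1/2} \I_R^{-1} M^{1/2}\|$ factor.
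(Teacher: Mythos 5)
Your proposal is correct and follows essentially the same route as the paper: both isolate the remainder $v = \E_{x\sim f_R}[\Delta_{-\eps}(x)\,s_R(x)]$ via the identity $\E[s_R(x+\eps)] + \I_R\eps = \E[\Delta_{-\eps}s_R]$, invoke Lemma~\ref{lem:shifted_score_expectation_helper} for $\E[\Delta_{-\eps}^2] \lesssim (\eps^T R^{-1}\eps)^2$, and use Cauchy--Schwarz together with $\E[s_R s_R^T]=\I_R$ to obtain the sharp prefactor $\|M^{1/2}\I_R^{-1}M^{1/2}\|$. The only difference is bookkeeping: you split the Cauchy--Schwarz into a weighted step ($v^T\I_R^{-1}v \le \E[\Delta_{-\eps}^2]$) plus the $\|\I_R^{-1/2}M\I_R^{-1/2}\| = \|M^{1/2}\I_R^{-1}M^{1/2}\|$ spectrum identity, whereas the paper takes a supremum over unit directions of $M^{1/2}\I_R^{-1}s_R$ directly; the two are equivalent.
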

\begin{proof}
By definition of $s_R$,
\begin{align*}
\E_{x \sim f_R}[s_R(x + \eps)] &= \int_{[-\infty, \infty]^d} f_R(x) \frac{\grad f_R(x + \eps)}{f_R(x + \eps)} dx\\
&= \int \grad f_R(x) \left(\frac{ f_R(x - \eps) - f_R(x)}{f_R(x)} \right)dx
\end{align*}
since $$\int \grad f_R(x)dx = 0$$
Now, by the definition of $\I_R$
\begin{align*}
\I_R &= \E_{x \sim f_R}[s_R(x) s_R(x)^T] = \int_{[-\infty, \infty]^d} \frac{\grad f_R(x) (\grad f_R(x))^T}{f_R(x)} dx\\
\end{align*}
So,
\begin{align*}
\E_{x \sim f_R}[s_R(x + \eps)] + \I_R \eps &= \int_{-[\infty, \infty]^d} \frac{\grad f_R(x)}{f_R(x)} \left( f_R(x - \eps) - f_R(x) + (\grad f_R(x))^T \eps \right) dx\\
&= \E_{x \sim f_R}[\Delta_{-\eps} (x) s_R(x)]
\end{align*}
where $\Delta_\eps(x) := \frac{f_R(x + \eps) - f_R(x) - (\grad f_R(x))^T \eps }{f_R(x)}$. Now, left multiplying both sides by $-M^{1/2} \I_R^{-1}$,
$$M^{1/2} \left(\E_{x \sim f_R}[-\I_R^{-1} s_R(x+\eps)] - \eps\right) = \E_{x \sim f_R}\left[\Delta_{-\eps} (x) (-M^{1/2} \I_R^{-1} s_R(x))\right]$$
So, we have
\begin{align*}
\|\E_{x \sim f_R}[-\I_R^{-1} s_R(x + \eps)] - \eps\|_M^2 = \| \E_{x \sim f_R}[\Delta_{-\eps} (x) \left(-M^{1/2} \I_R^{-1} s_R(x)\right)]\|^2
\end{align*}
Now, by Cauchy-Schwarz
\begin{align*}
    \|\E_{x \sim f_R}\left[\Delta_{-\eps}(x) (-M^{1/2} \I_R^{-1} s_R(x)) \right]\|^2 &= \sup_{w \in S^{d-1}} \E_{x \sim f_R}[\Delta_{-\eps}(x) (-M^{1/2} \I_R^{-1} s_R(x))^T w]^2\\
    &\le \sup_{w \in S^{d-1}}\E_{x \sim f_R}[\Delta_{-\eps}(x)^2] \E_{x \sim f_R}\left[(-M^{1/2} \I_R^{-1} s_R(x))^T w)^2 \right]\\
    &= \E_{x \sim f_R} \left[\Delta_{-\eps}(x)^2 \right] \|\E_{x \sim f_R}\left[(-M^{1/2} \I_R^{-1} s_R(x)) (s_R(x)^T \I_R^{-1} M^{1/2})\right]\|\\
    &= \E_{x \sim f_R}[\Delta_{-\eps}(x)^2] \|M^{1/2} \I_R^{-1} M^{1/2}\|
\end{align*}
Using Lemma~\ref{lem:shifted_score_expectation_helper}, we finally have $$\|\E_{x \sim f_R}[-\I_R^{-1} s_R(x + \eps)] - \eps\|_M^2 \lesssim \|M^{1/2} \I_R^{-1} M^{1/2}\| (\eps^T R^{-1} \eps)^2$$
\end{proof}

Next, in Lemma~\ref{lem:zeta_bound} we prove a utility result that we make use of in Lemma~\ref{lem:shifted_covariance}.
\begin{lemma}
\label{lem:zeta_bound}
    Let $f_R$ be the $R$-smoothed version of $f$ on $\R^d$. For $\eps \in \R^d$, let
    $$\zeta_\eps(x) = \frac{f_R(x-\eps) - f_R(x)}{f_R(x)}$$
    Then, for any $\eps$ such that $|\eps^T R^{-1} \eps| \le 1/4$, and for any $\alpha$ such that $\alpha^2(\eps^T R^{-1} \eps) \lesssim 1$ we have
    $$\E_{x \sim f_R}[\zeta_\eps(x)^2] \lesssim (\eps^T R^{-1} \eps) (\alpha^2e^{-\Omega(\alpha^2)} + e^{-\Omega(\alpha^2)})$$
\end{lemma}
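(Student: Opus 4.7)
The plan is to leverage the change-of-measure identity from Lemma~\ref{lem:smoothed_score} (applied with $-\eps$ in place of $\eps$) to rewrite
\[
\zeta_\eps(x) = \E_{Z_R \mid x}\!\left[e^{-\eps^T R^{-1} Z_R - \tfrac{1}{2}\eps^T R^{-1}\eps} - 1\right].
\]
Setting $\sigma^2 := \eps^T R^{-1}\eps$, $W := -\eps^T R^{-1} Z_R$, and $g(w) := e^{w-\sigma^2/2} - 1$, Jensen's inequality applied to the inner conditional expectation, followed by iterated expectation (so that $Z_R$ marginalizes to $\mathcal{N}(0,R)$), yields
\[
\E_{x \sim f_R}[\zeta_\eps(x)^2] \;\le\; \E_{W \sim \mathcal{N}(0,\sigma^2)}[g(W)^2].
\]
This reduces the claim to a purely one-dimensional Gaussian MGF computation. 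Note that $\E[g(W)^2] = e^{\sigma^2}-1$ in closed form, so the crude bound is $O(\sigma^2)$; the $\alpha$-dependent improvement must come from further splitting.

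Next, I would split the expectation based on whether $|W| \le \alpha\sigma$ or $|W| > \alpha\sigma$. The hypothesis $\alpha^2 \sigma^2 \lesssim 1$ guarantees $|W| \lesssim 1$ in the inner region, so a Taylor expansion of $e^{W-\sigma^2/2}$ gives $g(W)^2 \lesssim W^2 + \sigma^4$, and the inner contribution is handled by the truncated Gaussian moment $\E[W^2\1_{|W|\le\alpha\sigma}] = O(\sigma^2)$. For the outer region I would split further by the sign of $W$: on $W < -\alpha\sigma$, use $|g(W)| \le 1$ together with the standard Gaussian tail $\Pr[W < -\alpha\sigma] \le e^{-\alpha^2/2}$; on $W > \alpha\sigma$, bound $g(W)^2 \le e^{2W}$ (valid since $\sigma^2 \ge 0$) and compute the truncated MGF $\E[e^{2W}\1_{W > \alpha\sigma}]$ by completing the square to obtain $e^{2\sigma^2}\Pr[\mathcal{N}(2\sigma^2,\sigma^2) > \alpha\sigma]$, which decays as $e^{-\Omega(\alpha^2)}$ once $\alpha\sigma \lesssim 1$ and $\sigma^2 \le 1/4$.

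The main obstacle is matching the precise form $(\eps^T R^{-1}\eps)(\alpha^2 e^{-\Omega(\alpha^2)} + e^{-\Omega(\alpha^2)})$. The truncated second-moment tail $\E[W^2 \1_{|W|>\alpha\sigma}]$ computed via integration by parts naturally gives $\sigma^2 \cdot O(\alpha\, e^{-\alpha^2/2})$, so the $\alpha^2$ prefactor in the stated bound must be absorbed into the constant inside $\Omega(\cdot)$ or obtained by a slightly looser Taylor bookkeeping, while the exponential MGF tail from $W > \alpha\sigma$ contributes the second $e^{-\Omega(\alpha^2)}$ piece. One must also verify that the hypothesis $|\eps^T R^{-1}\eps| \le 1/4$ justifies both the Taylor remainder and the convergence of the exponent-$2$ MGF, and combine these terms carefully so that the $\sigma^2$ prefactor threads through all cases.
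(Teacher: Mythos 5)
Your skeleton is the same as the paper's: rewrite $\zeta_\eps$ through the change-of-measure identity of Lemma~\ref{lem:smoothed_score}, pass to the scalar $W$ whose marginal is $\Normal(0,\sigma^2)$ with $\sigma^2:=\eps^T R^{-1}\eps$, apply conditional Jensen, and split at $|W|=\alpha\sigma$ with a Taylor estimate inside and Gaussian tail/MGF estimates (completing the square) outside. One fixable bookkeeping issue: your outer-region bounds as written drop the $\sigma^2$ prefactor --- bounding $|g(W)|\le 1$ against $\Pr[W<-\alpha\sigma]\le e^{-\alpha^2/2}$, and the truncated MGF at threshold $\alpha\sigma$, each give only $e^{-\Omega(\alpha^2)}$ rather than $\sigma^2 e^{-\Omega(\alpha^2)}$. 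The paper recovers the prefactor by splitting additionally at $|W|=1$: for $\alpha\sigma<|W|\le 1$ the Taylor bound $g(W)^2\lesssim W^2$ still applies and $\E[W^2\1_{|W|>\alpha\sigma}]\lesssim (1+\alpha^2)\sigma^2 e^{-\Omega(\alpha^2)}$, while for $|W|>1$ one uses $\Pr[|W|>1]\lesssim e^{-1/(2\sigma^2)}\lesssim \sigma^2 e^{-\Omega(\alpha^2)}$, which is where both hypotheses $\sigma^2\le 1/4$ and $\alpha^2\sigma^2\lesssim 1$ enter. You should adopt that finer split.

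More importantly, the obstacle you flag in your last paragraph --- that the inner region contributes a flat $O(\sigma^2)$ (your $\E[W^2\1_{|W|\le\alpha\sigma}]\le\sigma^2$) which cannot be decorated with an $e^{-\Omega(\alpha^2)}$ factor --- is real and cannot be ``absorbed.'' The lemma as stated is not provable: take $f$ a point mass (or $\Normal(0,\epsilon^2 I_d)$ with tiny $\epsilon$), so that $f_R\approx\Normal(0,R)$ and $\E_{x\sim f_R}[\zeta_\eps(x)^2]=e^{\sigma^2}-1\ge\sigma^2$, whereas the stated bound with $\alpha\asymp 1/\sigma$ (allowed, since then $\alpha^2\sigma^2\lesssim1$) would be $o(\sigma^2)$. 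The paper's own argument in fact only establishes its displayed intermediate bound $\E[\zeta_\eps^2]\lesssim\alpha^2\sigma^2+\sigma^2e^{-\Omega(\alpha^2)}$: the inner-region term $O(\alpha\sigma)$ is bounded deterministically before squaring and never acquires decay, and the decay factor on the $\alpha^2\sigma^2$ term appears only in the proof's final line and in the lemma statement without justification. So, with the outer-region fix above, your argument proves exactly what is provable --- your inner bound $O(\sigma^2)$ is in fact slightly sharper than the paper's $O(\alpha^2\sigma^2)$ --- and the residual mismatch is a defect of the statement (which then propagates to the choice of $\alpha$ in Lemma~\ref{lem:shifted_covariance}), not a gap in your proof.
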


\begin{proof}
    By Lemma~\ref{lem:smoothed_score}, we have
    $$\zeta_\eps(x) = \frac{f_R(x - \eps) - f_R(x)}{f_R(x)} = \E_{Z_R | x} \left[e^{-\eps^T R^{-1} Z_R - \frac{1}{2} \eps^T R^{-1} \eps} - 1 \right]$$
    For the remaining proof, let $W = \eps^T R^{-1} Z_R$. Since $Z_R \sim \mathcal N(0, R)$, we have that $W \sim \mathcal N(0, \eps^T R^{-1} \eps)$. So, we have that 
    $$\zeta_\eps(x) = \E_{W | x} \left[e^{-W - \frac{1}{2} \eps^T R^{-1} \eps} - 1\right]$$
    Let $\alpha$ be a parameter such that $\alpha^2 (\eps^T R^{-1} \eps) \lesssim 1$. Now, we have
    \begin{align*}
        \zeta_\eps(x) &\le O\left(\alpha \sqrt{\eps^T R^{-1} \eps} \right) + \E_{W | x}\left[\1_{|W| > \alpha \sqrt{\eps^T R^{-1} \eps}}\left(e^{-W} - 1 \right) \right]
    \end{align*}

    So,
    \begin{align*}
        \zeta_\eps(x)^2 \lesssim \alpha^2 (\eps^T R^{-1} \eps) + \E_{W|x}\left[\1_{|W| > \alpha \sqrt{\eps^T R^{-1} \eps}}(e^{-W} - 1) \right]^2
    \end{align*}
    Now, to bound the second term, by Jensen's inequality, we have
    \begin{align*}
        \E_{W|x}\left[\1_{|W| > \alpha \sqrt{\eps^T R^{-1} \eps}} (e^{-W} - 1) \right]^2 \le \E_{W|x} \left[\1_{|W| > \alpha \sqrt{\eps^T R^{-1} \eps}} (e^{-W} - 1)^2 \right]
    \end{align*}
    So, we have
    \begin{align*}
        \E_{x \sim f_R}\left[\zeta_\eps(x)^2\right] \lesssim \alpha^2(\eps^T R^{-1} \eps) + \E_{W}\left[\1_{|W| > \alpha \sqrt{\eps^T R^{-1} \eps}} (e^{-W} - 1)^2 \right]
    \end{align*}
    We will now bound the second term above, $\E_W\left[\1_{|W| > \alpha \sqrt{\eps^T R^{-1} \eps}} (e^{-W} - 1)^2 \right]$, in two separate cases, when
    \begin{enumerate}
        \item $|W| \le 1$
        \item $|W| > 1$
    \end{enumerate}
    When $|W| \le 1$, by linear approximations to the exponential function, we have
    $$(e^{-W} - 1)^2 \lesssim W^2$$
    So,
    \begin{align*}
    \E_W\left[\1_{|W| > \alpha \sqrt{\eps^T R^{-1} \eps}} \1_{|W| \le 1} (e^{-W} - 1)^2\right] 
    &\lesssim \E_{W \sim \mathcal N(0, \eps^T R^{-1} \eps)}\left[\1_{|W| > \alpha \sqrt{\eps^T R^{-1} \eps}} \cdot W^2 \right]\\
    &\lesssim \alpha^2 (\eps^T R^{-1} \eps) e^{-\Omega(\alpha^2)}
    \end{align*}

    On the other hand, when $|W| > 1$
    \begin{align*}
        &\E_{W} \left[\1_{|W| > \max(1, \alpha \sqrt{\eps^T R^{-1} \eps})} (e^{-W} - 1)^2 \right]\\
        &\le \int_{-\infty}^{-(1 + \alpha \sqrt{\eps^T R^{-1} \eps})} \frac{1}{\sqrt{2 \pi \eps^T R^{-1} \eps}} e^{-\frac{w^2}{2 \eps^T R^{-1} \eps}} (e^{-w} - 1)^2 dw + \int_{1 + \alpha \sqrt{\eps^T R^{-1} \eps}}^\infty \frac{1}{\sqrt{2 \pi \eps^T R^{-1} \eps}} e^{-\frac{w^2}{2 \eps^T R^{-1} \eps}} (e^{-w} - 1)^2 dw\\
        &\lesssim \int_{1 + \alpha \sqrt{\eps^T R^{-1} \eps}}^\infty \frac{1}{\sqrt{2 \pi \eps^T R^{-1} \eps}} e^{-\frac{w^2}{2 \eps^T R^{-1} \eps}} (e^{w} - 1)^2 dw\\
        &\lesssim \int_{1 + \alpha \sqrt{\eps^T R^{-1} \eps}}^\infty \frac{1}{\sqrt{2 \pi \eps^T R^{-1} \eps}} e^{-\frac{w^2}{2 \eps^T R^{-1} \eps}} e^{2w} dw\\
        &= e^{2(\eps^T R^{-1} \eps)} \int_{1 + \alpha \sqrt{\eps^T R^{-1} \eps}}^{\infty} \frac{1}{\sqrt{2\pi \eps^T R^{-1} \eps}} e^{-\frac{(w - 2\eps^T R^{-1} \eps)^2}{2 \eps^T R^{-1} \eps}} dw\\
        &\lesssim e^{-\Omega\left(\frac{1}{\eps^T R^{-1} \eps} + \alpha^2 \right)} \lesssim (\eps^T R^{-1} \eps) e^{-\Omega(\alpha^2)} \quad \text{since $|\eps^T R^{-1} \eps| \le 1/4$}
    \end{align*}
    Thus, we have shown that
    \begin{align*}
        \E_{x \sim f_R}\left[\zeta_\eps(x)^2 \right] \lesssim \alpha^2 (\eps^T R^{-1} \eps) e^{-\Omega(\alpha^2)} + (\eps^T R^{-1} \eps) e^{-\Omega(\alpha^2)}
    \end{align*}
    The claim follows.
\end{proof}


The next Lemma~\ref{lem:shifted_covariance} shows that for small $\eps$, the covariance of the appropriately transformed version of the shifted score $s_R(x+\eps)$ is similar to the corresponding transformation of the Fisher information matrix $\I_R$.
\begin{lemma}
\label{lem:shifted_covariance}
    Suppose $f_R$ is a $R$-smoothed distribution on $\R^d$ with Fisher information matrix $\I_R$. Let $M \in \R^{d \times d}$ be a symmetric matrix such that $M \succcurlyeq 0$. Then for any $\eps \in \R^d$ with $|\eps^T R^{-1} \eps| \le 1/4$, we have, for every $v \in \R^d$ with $\|v\| = 1$,
    \begin{align*}
    &\left|v^T \left(\E_{x \sim f_R}\left[M^{1/2} \I_R^{-1} s_R(x+\eps) s_R(x+\eps)^T \I_R^{-1} M^{1/2} \right] - M^{1/2} \I_R^{-1} M^{1/2}\right)v\right| \\
    &\lesssim \sqrt{\eps^T R^{-1} \eps} \cdot (v^T M^{1/2} \I_R^{-1} M^{1/2} v) \sqrt{ \log \left(\sup_{w \in S^{d-1}}\frac{w^T R^{-1} w}{w^T \I_R w}\right)}
    \end{align*}
\end{lemma}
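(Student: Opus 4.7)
The plan is to reduce the bilinear form on the left-hand side to $|\E_{y \sim f_R}[\zeta_\eps(y) Y^2]|$ for a suitable one-dimensional random variable $Y$, then bound it by a truncation argument that uses subgamma tails. By the change of variables $y = x + \eps$ and the identity $f_R(y - \eps) = f_R(y)(1 + \zeta_\eps(y))$,
\begin{align*}
\E_{x \sim f_R}[s_R(x+\eps) s_R(x+\eps)^T] = \I_R + \E_{y \sim f_R}[\zeta_\eps(y) s_R(y) s_R(y)^T].
\end{align*}
Setting $w := \I_R^{-1} M^{1/2} v$ so that $w^T \I_R w = v^T M^{1/2} \I_R^{-1} M^{1/2} v =: \sigma^2$, and letting $Y := w^T s_R(y)$, the left-hand side of the lemma equals $|\E_{y \sim f_R}[\zeta_\eps(y) Y^2]|$. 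Note $\E[Y] = 0$ and $\E[Y^2] = \sigma^2$; moreover, by the high-dimensional subgamma property of the smoothed score (the $\eps = 0$ case of Corollary~\ref{cor:shifted_score_subgamma}), $Y$ is $(\sigma^2, c)$-subgamma with scale $c := \sqrt{w^T R^{-1} w}$.

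Next, I introduce a truncation threshold $A > 0$ and split $Y^2 = (Y^2 \wedge A^2) + (Y^2 - A^2)_+$. Since $\E[\zeta_\eps] = 0$ and $0 \le Y^2 \wedge A^2 \le A^2$ with $\E[Y^2 \wedge A^2] \le \sigma^2$, Cauchy--Schwarz gives
\begin{align*}
|\E[\zeta_\eps (Y^2 \wedge A^2)]|
&= |\E[\zeta_\eps((Y^2 \wedge A^2) - \E[Y^2 \wedge A^2])]|\\
&\le \sqrt{\E[\zeta_\eps^2] \cdot \Var(Y^2 \wedge A^2)}\\
&\le \sqrt{\E[\zeta_\eps^2]} \cdot A\sigma,
\end{align*}
using $\Var(Y^2 \wedge A^2) \le A^2 \E[Y^2 \wedge A^2] \le A^2 \sigma^2$. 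For the tail part, Cauchy--Schwarz similarly yields $|\E[\zeta_\eps (Y^2 - A^2)_+]| \le \sqrt{\E[\zeta_\eps^2] \cdot \E[Y^4 \1_{|Y|>A}]}$, and the subgamma tail bound on $Y$ together with integration by parts controls $\E[Y^4 \1_{|Y|>A}]$ by a quantity decaying exponentially in $\min(A^2/\sigma^2, A/c)$. Lemma~\ref{lem:zeta_bound} with its free parameter set to $1$ furnishes $\E[\zeta_\eps^2] \lesssim \eps^T R^{-1} \eps$.

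Finally, I choose $A = \sigma \sqrt{\log K}$ with $K := \sup_{w' \ne 0} (w'^T R^{-1} w')/(w'^T \I_R w')$, so the bounded-part contribution becomes $\sqrt{\eps^T R^{-1} \eps} \cdot \sigma^2 \sqrt{\log K}$, matching the target after substituting $\sigma^2 = v^T M^{1/2} \I_R^{-1} M^{1/2} v$. The hard part will be verifying that $\sqrt{\E[Y^4 \1_{|Y|>A}]} \lesssim A\sigma$ holds uniformly in both the sub-Gaussian regime ($A \le \sigma^2/c$) and the sub-exponential regime ($A > \sigma^2/c$). The sub-Gaussian case is immediate because the resulting tail integral gains a $K^{-1/2}$ factor. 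The sub-exponential case occurs precisely when $c^2/\sigma^2 > 1/\log K$, and there I must check by elementary estimates that the $e^{-A/(2c)} = \exp(-\sqrt{\log K}/(2\sqrt{c^2/\sigma^2}))$ decay overwhelms the polynomial-in-$A$ growth coming from the $\int u^3 e^{-u}\,du$-type expression, so that the tail term remains subdominant. This step is the high-dimensional analog of Fact~\ref{fact:var_close_to_fisher} and is the technical heart of the argument.
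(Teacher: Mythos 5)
Your reduction to $\bigl|\E_{y\sim f_R}[\zeta_\eps(y)\,Y^2]\bigr|$ with $Y=w^Ts_R(y)$, $w=\I_R^{-1}M^{1/2}v$, is exactly the paper's first step, and the bounded piece of your truncation argument is fine ($\E[\zeta_\eps]=0$ and the variance bound are correct). The gap is in the tail piece, precisely where you flag it, and it is not a matter of "checking elementary estimates": it fails. The scale of $Y$ satisfies $c^2/\sigma^2=(w^TR^{-1}w)/(w^T\I_R w)$, which can be as large as $K$ itself (nothing prevents $w$ from being, or being close to, the maximizing direction in the definition of $K$). In that case $A=\sigma\sqrt{\log K}$ has $A/c\approx\sqrt{(\log K)/K}\ll 1$, so the subgamma tail bound at level $A$ gives essentially no decay, and nothing in the subgamma hypothesis rules out $\E[Y^4\1_{|Y|>A}]$ of order $\sigma^2c^2$ (a variable of size $\sim c$ with probability $\sim\sigma^2/c^2$ is $(\sigma^2,O(c))$-subgamma, and the fourth-moment bound of Lemma~\ref{lem:score_subgamma} for the score is exactly of this order). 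Your tail term is then $\sqrt{\E[\zeta_\eps^2]\,\E[Y^4\1_{|Y|>A}]}\approx\sqrt{\eps^TR^{-1}\eps}\,\sigma c\approx\sqrt{\eps^TR^{-1}\eps}\,\sigma^2\sqrt{K}$, exceeding the target by a factor $\sqrt{K/\log K}$. Raising $A$ to order $c\log K$ restores the tail decay but inflates the bounded-part bound to $\sqrt{\eps^TR^{-1}\eps}\,A\sigma\approx\sqrt{\eps^TR^{-1}\eps}\,\sigma c\log K$, so once you have frozen $\E[\zeta_\eps^2]\lesssim\eps^TR^{-1}\eps$ no choice of $A$ closes the gap.

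The reason is that setting the free parameter of Lemma~\ref{lem:zeta_bound} to $1$ throws away the only available source of the extra $\sigma^2/c^2$ factor. The paper's proof uses plain Cauchy--Schwarz, $|\E[\zeta_\eps Y^2]|\le\sqrt{\E[\zeta_\eps^2]\,\E[Y^4]}$, bounds $\E[Y^4]\le 3c^4$ via Jensen through $s_R(x)=\E[R^{-1}Z_R\mid x]$ (Lemma~\ref{lem:smoothed_score}) and the Gaussian fourth moment, and then chooses $\alpha\asymp\sqrt{\log(c^2/\sigma^2)}$ in Lemma~\ref{lem:zeta_bound}, so that the $e^{-\Omega(\alpha^2)}$ factor converts $c^2$ into $\sigma^2\sqrt{\log(c^2/\sigma^2)}\le\sigma^2\sqrt{\log K}$. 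In other words, the needed gain comes from the Gaussian-type concentration of $\zeta_\eps$ itself, not from truncating $Y$; to rescue your outline you would have to reinstate a large $\alpha$ in Lemma~\ref{lem:zeta_bound} (or bound $\E[\zeta_\eps^2]$ restricted to the event $\{|Y|>A\}$), at which point you essentially reproduce the paper's argument, and your truncation of $Y^2$ becomes unnecessary.
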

\begin{proof}
    We have, by definition of score,
    \begin{align*}
    \E_{x \sim f_R}[s_R(x+\eps) s_R(x+\eps)^T] &= \int_{[-\infty, \infty]^d} f_R(x) \frac{\grad f_R(x+\eps) (\grad f_R(x+\eps))^T}{f_R(x+\eps)^2} dx\\
    &= \int f_R(x - \eps) \frac{\grad f_R(x) (\grad f_R(x))^T}{f_R(x)^2} dx\\
    &= \I_R + \int (f_R(x - \eps) - f_R(x)) \left(\frac{\grad f_R(x) (\grad f_R(x))^T}{f_R(x)^2} \right) dx\\
    &= \I_R + \int\zeta_\eps(x) \left(\frac{\grad f_R(x) (\grad f_R(x))^T}{f_R(x)} \right) dx\\
    &=\I_R + \E_{x \sim f_R}\left[\zeta_\eps(x) \frac{\grad f_R(x)(\grad f_R(x))^T}{f_R(x)^2} \right]
    \end{align*} 
    where $\zeta_\eps(x) = \frac{f_R(x - \eps) - f_R(x)}{f_R(x)}$.
    Now, since $s_R(x) = \frac{\grad f_R(x)}{f_R(x)}$, the above is equivalent to
    $$\E_{x \sim f_R}\left[ s_R(x+\eps) s_R(x+\eps)^T\right] - \I_R = \E_{x \sim f_R} \left[\zeta_\eps(x) s_R(x) s_R(x)^T\right]$$
    Left and right multiplying both sides by $M^{1/2} \I_R^{-1}$, this is
    $$\E_{x \sim f_R} \left[M^{1/2} \I_R^{-1} s_R(x+\eps) s_R(x+\eps)^T\I_R^{-1} M^{1/2} \right] - M^{1/2} \I_R^{-1} M^{1/2} = M^{1/2} \I_R^{-1} \E_{x \sim f_R}\left[\zeta_\eps(x) s_R(x) s_R(x)^T \right] \I_R^{-1} M^{1/2}$$

    Then, for $v \in \R^d$ with $\|v\| = 1$
    \begin{align*}
        v^T \left( \E_{x \sim f_R}\left[M^{1/2} \I_R^{-1}  s_R(x+\eps) s_R(x+\eps)^T \I_R^{-1} M^{1/2}\right] - M^{1/2} \I_R^{-1} M^{1/2} \right) v  &= \E_{x \sim f_R} \left[ \zeta_\eps(x) (v^T M^{1/2} \I_R^{-1} s_R(x))^2\right]
    \end{align*}
    Then, using Cauchy-Schwarz,
    \begin{align}
    \begin{split}
    \label{eq:second_moment_cauchy}
    &\left|v^T \left( \E_{x \sim f_R}\left[M^{1/2} \I_R^{-1}  s_R(x+\eps) s_R(x+\eps)^T \I_R^{-1} M^{1/2}\right] - M^{1/2} \I_R^{-1} M^{1/2} \right) v \right|\\
    &\le \sqrt{\E_{x \sim f_R}\left[\zeta_\eps(x)^2 \right] \E_{x \sim f_R}\left[(v^T M^{1/2}\I_R^{-1} s_R(x))^4 \right]}
    \end{split}
    \end{align}
    To bound the second term inside the square root, recall that by Lemma~\ref{lem:smoothed_score}, we have
    $$s_R(x) = \E_{Z_R | x} [R^{-1} Z_R]$$
    So, by Jensen's inequality, we have
    \begin{align*}
        \E_{x \sim f_R}[(v^T M^{1/2} \I_R^{-1} s_R(x))^4] &= \E_{x \sim f_R}\left[(v^T M^{1/2} \I_R^{-1}\E_{Z_R | x}[R^{-1} Z_R])^4 \right]\\
        &= \E_{x \sim f_R}\left[\E_{Z_R|x}[v^T M^{1/2}\I_R^{-1} R^{-1} Z_R]^4 \right]\\
        &\le \E_{x \sim f_R} \left[\E_{Z_R | x}[(v^T M^{1/2} \I_R^{-1} R^{-1} Z_R)^4 ] \right]\\
        &= \E_{Z_R}\left[(v^T M^{1/2} \I_R^{-1} R^{-1} Z_R)^4 \right]
    \end{align*}
    Now, since $Z_R \sim \mathcal N(0, R)$, we have that $v^\top M^{1/2} \I_R^{-1} R^{-1} Z_R \sim \mathcal N(0, v^\top M^{1/2} \I_R^{-1} R^{-1} \I_R^{-1} M^{1/2} v)$ is a 1-dimensional Gaussian.
    Thus, using the standard fact about the $4^{\text{th}}$ moment of a 1-dimensional Gaussian, we have
    $$\E_{x \sim f_R}\left[(v^T M^{1/2} \I_R^{-1} s_R(x))^4 \right] \le  \E_{Z_R} \left[ (v^T M^{1/2} \I_R^{-1} R^{-1} Z_R)^4\right] = 3(v^T M^{1/2} \I_R^{-1} R^{-1}\I_R^{-1} M^{1/2} v)^2$$

    For the first term under the square root in \eqref{eq:second_moment_cauchy}, by Lemma~\ref{lem:zeta_bound}, for any $\alpha \in \R$ such that $\alpha^2 (\eps^T R^{-1} \eps) \lesssim 1$, we have
    $$\E_{x \sim f_R}[\zeta_\eps(x)^2] \lesssim (\eps^T R^{-1} \eps) (\alpha^2 e^{-\Omega(\alpha^2)} + e^{-\Omega(\alpha^2)})$$

    So, combining the above with \eqref{eq:second_moment_cauchy}, we have
    \begin{align*}
        &\left|v^T \left( \E_{x \sim f_R}\left[M^{1/2} \I_R^{-1}  s_R(x+\eps) s_R(x+\eps)^T \I_R^{-1} M^{1/2}\right] - M^{1/2} \I_R^{-1}  M^{1/2} \right) v\right|\\
        &\lesssim (v^T M^{1/2} \I_R^{-1} R^{-1}\I_R^{-1}  M^{1/2} v)\sqrt{(\eps^T R^{-1} \eps)(\alpha^2 e^{-\Omega(\alpha^2)} + e^{-\Omega(\alpha^2)})}\\
        &\lesssim (v^T M^{1/2}\I_R^{-1} R^{-1} \I_R^{-1} M^{1/2} v) \sqrt{\eps^T R^{-1} \eps} (\alpha e^{-\Omega(\alpha^2)})
    \end{align*}

    Setting $\alpha = O\left(\sqrt{\log \frac{v^T M^{1/2} \I_R^{-1} R^{-1} \I_R^{-1} M^{1/2} v}{v^T M^{1/2} \I_R^{-1}  M^{1/2} v}} \right)$ yields
    \begin{align*}
    &\left|v^T \left(\E_{x \sim f_R}\left[M^{1/2} \I_R^{-1} s_R(x+\eps) s_R(x+\eps)^T \I_R^{-1} M^{1/2} \right] - M^{1/2} \I_R^{-1} M^{1/2}\right)v\right|\\
    &\lesssim \sqrt{\eps^T R^{-1} \eps} \cdot (v^T M^{1/2} \I_R^{-1} M^{1/2} v) \sqrt{ \log \frac{v^T M^{1/2} \I_R^{-1} R^{-1} \I_R^{-1} M^{1/2} v}{v^T M^{1/2} \I_R^{-1} M^{1/2} v}} 
    \end{align*}
    Since
    \begin{align*}
        \frac{v^T M^{1/2} \I_R^{-1} R^{-1} \I_R^{-1} M^{1/2} v}{v^T M^{1/2} \I_R^{-1}  M^{1/2} v} \le \sup_{w \in S^{d-1}} \frac{w^T R^{-1} w}{w^T \I_R w}
    \end{align*}
    the claim follows.
\end{proof}
\subsection{SubGamma concentration of score}
\label{appendix:high_dim_score_subgamma}
In this section, we establish that every one-dimensional projection of the score function $s_R$ after applying a symmetric PSD linear transformation is subgamma with appropriate variance and scale parameters. We begin by showing a bound on the Jacobian of the score, which we make use of in future lemmas.
\begin{lemma}
\label{lem:score_jacobian}
Let $s_R : \R^d \to \R^d$ be the score function of $f_R$, the $R$-smoothed version of distribution $f$. Let $\J_{s_R}$ be the Jacobian of $s_R$. We have that
$$\J_{s_R} \succcurlyeq - R^{-1}$$
\end{lemma}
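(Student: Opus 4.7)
The plan is to compute the Jacobian $\J_{s_R}(x)$ explicitly and identify it with a conditional-covariance expression, which is manifestly PSD up to the $-R^{-1}$ shift. Since $s_R = \nabla \log f_R$, the Jacobian equals the Hessian of the log-density, so we can write
\[
\J_{s_R}(x) \;=\; \nabla^2 \log f_R(x) \;=\; \frac{\nabla^2 f_R(x)}{f_R(x)} - s_R(x)\, s_R(x)^T.
\]
The first step is to evaluate $\nabla^2 f_R$ by differentiating under the integral in $f_R(x) = \int f(y)\, w_R(x-y)\,dy$. Using $\nabla w_R(z) = -R^{-1} z\, w_R(z)$ and $\nabla^2 w_R(z) = w_R(z)\bigl(R^{-1} z z^T R^{-1} - R^{-1}\bigr)$, and recognizing the conditional density $p(z \mid x) = f(x-z)\,w_R(z)/f_R(x)$ of $Z_R$ given $X = x$, I obtain
\[
\frac{\nabla^2 f_R(x)}{f_R(x)} \;=\; \E\!\left[R^{-1} Z_R Z_R^T R^{-1} \,\big|\, X = x\right] - R^{-1}.
\]

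Next, I invoke Lemma~\ref{lem:smoothed_score} to identify $s_R(x) = \E[R^{-1} Z_R \mid X = x]$, so that
\[
s_R(x)\, s_R(x)^T \;=\; R^{-1}\,\E[Z_R \mid X=x]\,\E[Z_R \mid X=x]^T R^{-1}.
\]
Subtracting, the two terms combine into a conditional covariance:
\[
\J_{s_R}(x) \;=\; R^{-1}\,\mathrm{Cov}(Z_R \mid X = x)\, R^{-1} \;-\; R^{-1}.
\]
Since conditional covariance matrices are PSD, $R^{-1}\,\mathrm{Cov}(Z_R \mid X=x)\,R^{-1} \succeq 0$, hence $\J_{s_R}(x) \succeq -R^{-1}$, as desired.

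There is essentially no substantive obstacle; the only care needed is in sign bookkeeping when differentiating $w_R$ and when applying Lemma~\ref{lem:smoothed_score}. As a sanity check, one can rederive the same bound via a convexity argument: writing $f_R(x)\, e^{x^T R^{-1} x / 2} = c\,\int f(y)\, e^{-\tfrac{1}{2} y^T R^{-1} y}\, e^{(R^{-1} x)^T y}\, dy$ expresses the left-hand side as (a constant times) the Laplace transform of a nonnegative measure evaluated at $R^{-1} x$. The logarithm of a Laplace transform is convex, so $\log f_R(x) + \tfrac{1}{2} x^T R^{-1} x$ is convex in $x$, which again yields $\nabla^2 \log f_R(x) \succeq -R^{-1}$.
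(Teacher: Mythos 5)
Your proof is correct, and it takes a genuinely different route from the paper. The paper never computes the Hessian of $\log f_R$ explicitly: it uses the representation of the shifted score from Lemma~\ref{lem:smoothed_score}, writes $s_R(x+\gamma v)$ as a ratio of conditional expectations of exponentially tilted quantities, invokes the correlation (rearrangement) inequality of Lemma~\ref{lem:rearrangement} for monotone functions of $v^T R^{-1} Z_R$ to get the directional lower bound $v^T s_R(x+\gamma v) \ge v^T \E_{Z_R\mid x}[R^{-1}Z_R - \gamma R^{-1}v]$, and then differentiates this scalar inequality at $\gamma = 0$. Your argument instead establishes the exact identity $\J_{s_R}(x) = R^{-1}\Cov(Z_R \mid X = x)\,R^{-1} - R^{-1}$, from which the bound is immediate since conditional covariances are PSD; in fact the paper's tilting-plus-rearrangement step is, in effect, a directional way of certifying positivity of this same covariance term, so your identity is the sharper statement and also makes transparent exactly when the inequality is tight (degenerate conditional law of $Z_R$). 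Your log-Laplace-transform convexity check is a clean second derivation of the same fact. Two small bookkeeping points: justify differentiating twice under the integral (routine here, since the Gaussian kernel and its first two derivatives are locally uniformly dominated and $f$ is a probability density), and note that the sign ambiguity in $s_R(x) = \pm\E[R^{-1}Z_R \mid X = x]$ (the paper's stated sign versus what direct differentiation gives) is immaterial, since only the outer product $s_R s_R^T$ enters and the covariance is invariant under $Z_R \mapsto -Z_R$.
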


\begin{proof}
Taking the gradient in Lemma~\ref{lem:smoothed_score} wrt $\eps$, we have
$$\frac{\grad f_R(x + \eps)}{f_R(x)} = \E_{Z_R | x} \left[ e^{\eps^T R^{-1} Z_R - \frac{1}{2} \eps^T R^{-1} \eps} \left(R^{-1} Z_R - R^{-1} \eps \right)\right]$$
So,
\begin{align*}
s_R(x + \eps) = \frac{\grad f_R(x + \eps) }{f_R(x + \eps)} \cdot \frac{f_R(x+\eps)}{f_R(x)} = \frac{\E_{Z_R | x}\left[e^{\eps^T R^{-1} Z_R - \frac{1}{2} \eps^T R^{-1} \eps} \left(R^{-1} Z_R - R^{-1} \eps \right) \right]}{\E_{Z_R|x}\left[ e^{\eps^T R^{-1} Z_R - \frac{1}{2} \eps^T R^{-1} \eps}\right]}
\end{align*}
Now, let $\eps = \gamma v$ for $\gamma \in \R$, $\gamma > 0 $ so that $\|v\| = 1$. Now, $e^{\eps^T R^{-1} Z_R - \frac{1}{2} \eps^T R^{-1} \eps}$ and $v^T R^{-1} Z_R - v^T R^{-1} \eps$ are monotonically non-decreasing in $v^T R^{-1} Z_R$. So, by Lemma~\ref{lem:rearrangement}, they are positively correlated. That is,
\begin{align*}
v^T \E_{Z_R | x}\left[e^{\eps^T R^{-1} Z_R - \frac{1}{2} \eps^T R^{-1} \eps} \left(R^{-1} Z_R - R^{-1} \eps \right) \right] \ge \E_{Z_R | x}\left[e^{\eps^T R^{-1} Z_R - \frac{1}{2} \eps^T R^{-1} \eps} \right] \cdot \left(v^T \E_{Z_R | x}\left[ R^{-1} Z_R - R^{-1} \eps \right]\right)
\end{align*}
So,
\begin{align}
\label{eq:score_jacobian_helper}
v^T s_R(x + \eps) \ge v^T\E_{Z_R | x}\left[R^{-1} Z_R - R^{-1} \eps \right]
\end{align}
Now, by definition of Jacobian
$$\J_{s_R} v = \left[ \frac{\partial}{\partial \gamma} s_R(x + \gamma v) \right]_{\gamma = 0}$$
So, in \eqref{eq:score_jacobian_helper}, taking the derivative wrt $\gamma$ and setting $\gamma = 0$, we get
$$v^T \J_{s_R} v \ge -v^T R^{-1} v$$
as required.
\end{proof}

The next lemma shows that every 1-dimensional projection of the score $s_R(x)$ is subgamma with appropriate variance and scale parameters. As a corollary (Corollary~\ref{cor:score_subgamma}) we obtain that every $1$-dimensional projection of the score when transformed using a symmetric PSD matrix is also subgamma, with appropriately transformed variance and scale.
\begin{lemma}
\label{lem:score_subgamma}
    Let $s_R:\R^d \to \R^d$ be the score function of an $R$-smoothed distribution $f_R$ with Fisher information matrix $\I_R$. For any fixed $v \in \R^d$ with $\|v\| = 1$, we have 
    \begin{align*}
        \E_{x \sim f_R} [|v^T R^{1/2} s_R(x)|^k] \le (1.6)^{k-2} k^{k/2} (v^T R^{1/2} \I_R R^{1/2} v)
    \end{align*}
    Equivalently, for any $v \in \R^d$, $v^T R^{1/2} s_R(x)$ is a subgamma random variable.
    $$v^T R^{1/2} s_R(x) \in \Gamma(v^T R^{1/2} \I_R R^{1/2} v, 1.6\|v\| )$$
\end{lemma}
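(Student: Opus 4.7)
Set $u := R^{1/2} v$ and $g(x) := v^T R^{1/2} s_R(x) = u^T s_R(x) = \partial_u \log f_R(x)$, the directional derivative of $\log f_R$ along $u$. By the Fisher information identity, $\E_{X \sim f_R}[g(X)^2] = u^T \I_R u = v^T R^{1/2} \I_R R^{1/2} v$, which is the target variance proxy $\sigma^2$ in the desired moment bound.

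The central step is an integration-by-parts identity along $u$. Since $g \cdot f_R = (u^T s_R)\, f_R = u^T \nabla f_R = \partial_u f_R$, one has, for $k \ge 2$,
\[
\E[g^k] = \int g^{k-1}\, \partial_u f_R\, dx = -(k-1)\, \E\!\left[g^{k-2}\, h(X)\right],
\]
where $h(x) := \partial_u g(x) = u^T J_{s_R}(x)\, u$, and the boundary terms vanish because $f_R$ is a Gaussian convolution and thus decays faster than any polynomial in $g$ can grow. By Lemma~\ref{lem:score_jacobian}, $J_{s_R} \succeq -R^{-1}$, hence $h(x) \ge -u^T R^{-1} u = -\|v\|^2 = -1$ pointwise, i.e.\ $-h(x) \le 1$.

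For even $k$, $g^{k-2} \ge 0$, so $\E[g^k] = (k-1)\, \E[g^{k-2}(-h)] \le (k-1)\, \E[g^{k-2}]$. Iterating this recursion down to $k=2$ yields $\E[g^k] \le (k-1)!!\, \E[g^2]$, and $(k-1)!! \le k^{k/2}$ by a direct estimate. For odd $k$, Cauchy--Schwarz gives $\E[|g|^k] \le \sqrt{\E[g^{k-1}] \E[g^{k+1}]} \le \sqrt{(k-2)!!\, k!!}\, \E[g^2] \le k^{k/2}\, \E[g^2]$, by applying the even-$k$ bound to each factor and checking that $\sqrt{(k-2)!!\, k!!} = \sqrt{k}\, (k-2)!! \le k^{k/2}$. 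Thus $\E[|g|^k] \le k^{k/2}\, \E[g^2]$ for all $k \ge 2$, which is strictly stronger than the claimed bound with constant $(1.6)^{k-2}$. The subgamma conclusion $v^T R^{1/2} s_R \in \Gamma(v^T R^{1/2} \I_R R^{1/2} v,\, 1.6\|v\|)$ then follows by the routine Taylor-expansion conversion of moment bounds to MGF bounds: expand $\E[e^{\lambda g}] = 1 + \sum_{k \ge 2} \lambda^k \E[g^k]/k!$ (the linear term vanishes since $\E[g] = 0$), plug in the moment bound, and use $k^{k/2}/k! \lesssim (e/k)^{k/2}$ to sum the resulting series; the $1.6$ absorbs bookkeeping slack in matching the subgamma MGF form.

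The main anticipated obstacle is verifying that the boundary terms in the integration by parts vanish. This requires only that $g^{k-1} f_R$ decays at infinity, which holds because $f_R$ is a Gaussian convolution (smooth with super-polynomially decaying tails) while $s_R = \nabla \log f_R$ grows at most polynomially, so this is a mild regularity check rather than a serious difficulty.
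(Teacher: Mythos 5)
Your argument is essentially correct but takes a genuinely different route from the paper. The paper proves the moment bound by a pointwise mass-shifting comparison: using the likelihood-ratio lower bound $f_R(x+\gamma) \ge f_R(x)e^{\gamma^T s_R(x) - \frac12\gamma^T R^{-1}\gamma}$ (from Lemma~\ref{lem:smoothed_score} plus Jensen) together with the Jacobian bound $\J_{s_R} \succcurlyeq -R^{-1}$, it shows that wherever $|\gamma^T s_R(x)| \ge \alpha = 2+1.2\sqrt{k}$ the weighted $k$-th power at $x$ is at most a quarter of the sum of the shifted contributions at $x\pm\gamma$; integrating then reduces everything to the region $|\gamma^T s_R| \le \alpha$, giving $2\alpha^{k-2}\,\gamma^T\I_R\gamma$. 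You instead run a Stein-type integration-by-parts recursion off the identity $g f_R = \partial_u f_R$, using the same one-sided Jacobian bound ($-h \le 1$) to get the Gaussian-like recursion $\E[g^k] \le (k-1)\E[g^{k-2}]$ and hence $(k-1)!!\,\E[g^2]$, with Cauchy--Schwarz for odd $k$. Both proofs hinge on Lemma~\ref{lem:score_jacobian}; yours trades the paper's shift/comparison trick for a divergence identity, and if the regularity is nailed down it actually yields the cleaner bound $\E[|g|^k] \le k^{k/2}\E[g^2]$, without the $1.6^{k-2}$ slack.

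The one place you need to be careful is the regularity step, and your stated justification is not correct as written: $f_R$ is a Gaussian convolution of an \emph{arbitrary} $f$, so it need not decay faster than any polynomial (take $f$ Cauchy), and "$s_R$ grows at most polynomially" is asserted rather than proved. The conclusion is still true, but the right way to see it is through the representation $s_R(x) = \E_{Z_R\mid x}[R^{-1}Z_R]$ of Lemma~\ref{lem:smoothed_score} (equivalently the posterior-mean/Tweedie form): by conditional Jensen, $f_R(x)\,|g(x)|^{k-1} \le \E_{y\sim f}\bigl[|u^T R^{-1}(x-y)|^{k-1} w_R(x-y)\bigr]$, which is uniformly bounded and tends to $0$ as $\|x\|\to\infty$ by dominated convergence, so the boundary terms vanish; the same Jensen step gives $\E[|g|^k] \le \E_{W\sim\Normal(0,1)}[|W|^k] < \infty$ a priori, and a similar computation controls $\E[g^{k-2}|h|]$, so the integration-by-parts identity and the rearrangements in your recursion are legitimate rather than circular. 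With those checks supplied (and noting $\E[g]=0$ for the MGF conversion, which otherwise matches how the paper treats the moment-to-subgamma step), your proof goes through.
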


\begin{proof}
For $x, \gamma \in \R^d$, by Lemma~\ref{lem:smoothed_score}, and Jensen's inequality,
$$f_R(x+\gamma) \ge f_R(x) e^{\gamma^T s_R(x) - \frac{1}{2} \gamma^T R^{-1} \gamma}$$
Set $\gamma = R^{1/2} v$. Then,
$$f_R\left(x + \gamma \cdot \sign(\gamma^T s_R(x))\right) \ge f_R(x) e^{|\gamma^T s_R(x)|}/\sqrt{e}$$
Now, by Lemma~\ref{lem:score_jacobian}, we have,
\begin{align*}
\gamma^T s_R(x + \gamma) &= \gamma^T s_R(x) + \gamma^T \J_{s_R} \gamma\\
&= \gamma^T s_R(x) + v^T R^{1/2} \J_{s_R} R^{1/2} v \\
&\ge \gamma^T s_R(x) - 1
\end{align*}
Similarly,
$$\gamma^T s_R(x - \gamma) \le \gamma^T s_R(x) + 1 $$
Combining these two, we have
$$|\gamma^T s_R(x + \gamma \cdot \sign(\gamma^T s_R(x))| \ge |\gamma^T s_R(x)| - 1$$
So, for any $k \ge 2$, and $|\gamma^T s_R(x)| > \alpha$ for $\alpha := 2 + 1.2 \sqrt{k}$
\begin{align*}
&f_R(x + \gamma \cdot \sign(\gamma^T s_R(x)) |\gamma^T s_R(x + \gamma \cdot \sign(\gamma^T s_R(x)))|^k \\
&\ge \frac{1}{\sqrt{e}} f_R(x) e^{|\gamma^T s_R(x)|} \left(|\gamma^T s_R(x)| - 1\right)^k\\
&= f_R(x) |\gamma^T s_R(x)|^k \cdot \left(\frac{1}{\sqrt{e}} e^{|\gamma^T s_R(x)|} \left(1 - \frac{1}{|\gamma^T s_R(x)|} \right)^k \right)\\
&\ge f_R(x) |\gamma^T s_R(x)|^k \cdot \left(\frac{1}{\sqrt{e}} e^{\alpha - 1.4 \frac{k}{\alpha}} \right)\\
& \ge f_R(x) |\gamma^T s_R(x)|^k \cdot 4
\end{align*}
Thus, 
$$f_R(x) |\gamma^T s_R(x)|^k \le \frac{1}{4} \left( f_R(x - \gamma) |\gamma^T s_R(x - \gamma)|^k + f_R(x + \gamma) |\gamma^T s_R(x + \gamma)|^k \right)$$
when $k \ge 2$ and $|\gamma^T s_R(x)| \ge \alpha$. Integrating this,
\begin{align*}
\E_{x \sim f_R}\left[|\gamma^T s_R(x)|^k \right] &= \int_{[-\infty, \infty]^d} f_R(x) |\gamma^T s_R(x)|^k dx\\
&\le 2 \int f_R(x) |\gamma^T s_R(x)|^k - \frac{1}{4} f_R(x - \gamma) |\gamma^T s_R(x - \gamma)|^k  - \frac{1}{4} f_R(x + \gamma) |\gamma^T s_R(x + \gamma)|dx\\
&\le 2 \int f_R(x) |\gamma^T s_R(x)|^k \1_{|\gamma^T s_R(x)| < \alpha}dx\\
&\le 2 \int f_R(x) |\gamma^T s_R(x)|^2 \alpha^{k-2} \1_{|\gamma^T s_R(x)| < \alpha} dx\\
&\le 2 \alpha^{k-2} \E[|\gamma^T s_R(x)|^2] = 2 \alpha^{k-2} \gamma^T \I_R \gamma
\end{align*}
Finally, for any $k\ge 2$,
$$2 \alpha^{k-2} = 2(1.2\sqrt{k} + 2)^{k-2} \le k^{k/2} \cdot 1.6^{k-2}$$
The claim follows.
\end{proof}

\begin{corollary}
\label{cor:score_subgamma}
Let $s_R:\R^d \to \R^d$ be the score function of an $R$-smoothed distribution $f_R$ with Fisher information matrix $\I_R$. Let $M \in \R^{d \times d}$ be a symmetric matrix such that $M \succcurlyeq 0$. For any fixed $v \in \R^d$ with $\|v\| = 1$, we have 
    \begin{align*}
        \E_{x \sim f_R} [|v^T M^{1/2} \I_R^{-1} s_R(x)|^k] \le (1.6\| M^{1/2} \I_R^{-1} R^{-1/2} v \|)^{k-2} k^{k/2} (v^T M^{1/2} \I_R^{-1} M^{1/2} v)
    \end{align*}    
Equivalently, $v^T M^{1/2} \I_R^{-1} s_R(x)$ is subgamma.
$$|v^T M^{1/2} \I_R^{-1} s_R(x)| \in \Gamma( v^T M^{1/2} \I_R^{-1} M^{1/2} v, 1.6 \|M^{1/2}\I_R^{-1} R^{-1/2} v\|)$$
\end{corollary}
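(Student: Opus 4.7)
The plan is to reduce the corollary directly to Lemma~\ref{lem:score_subgamma} via a linear reparametrization. The lemma controls $k$-th moments of linear functionals of the form $v^T R^{1/2} s_R(x)$, so the strategy is to rewrite the random scalar of interest in exactly that form. Concretely, since $R$ is invertible, insert $R^{-1/2} R^{1/2} = I$ between $\I_R^{-1}$ and $s_R(x)$ to get
\[
  v^T M^{1/2} \I_R^{-1} s_R(x) \;=\; u^T R^{1/2} s_R(x), \qquad u \;:=\; R^{-1/2} \I_R^{-1} M^{1/2} v,
\]
where $u$ need not be a unit vector. The key property of this choice of $u$ is that it is designed so that when we later evaluate the $R^{1/2} \I_R R^{1/2}$ quadratic form appearing in the lemma's bound, the factors collapse cleanly.

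Next, extend the lemma to general (non-unit) vectors by applying it to $\hat u := u/\|u\|$ and rescaling: multiplying both sides of the lemma's moment bound by $\|u\|^k$ gives
\[
  \E_{x \sim f_R}\bigl[|u^T R^{1/2} s_R(x)|^k\bigr] \;\le\; (1.6 \|u\|)^{k-2}\, k^{k/2}\, \bigl(u^T R^{1/2} \I_R R^{1/2} u\bigr).
\]
Now plug in $u = R^{-1/2} \I_R^{-1} M^{1/2} v$ and simplify the $\I_R$-quadratic form: the $R^{-1/2} \cdot R^{1/2}$ and $R^{1/2} \cdot R^{-1/2}$ pairs cancel on each side, and one copy of $\I_R$ cancels one of the $\I_R^{-1}$ factors, leaving
\[
  u^T R^{1/2} \I_R R^{1/2} u \;=\; v^T M^{1/2} \I_R^{-1} M^{1/2} v,
\]
which matches exactly the variance factor in the corollary's statement.

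The only remaining bookkeeping is to identify $\|u\| = \|R^{-1/2} \I_R^{-1} M^{1/2} v\|$ with the scale parameter appearing in the corollary. The matrix $M^{1/2} \I_R^{-1} R^{-1/2}$ in the corollary is the transpose of the matrix applied here (all three factors are symmetric, so the transpose just reverses the order). With the scale identified, we obtain the claimed moment bound for every $k \ge 2$. The subgamma membership statement then follows from the standard equivalence between moment bounds of the form $\E[|X|^k] \le \sigma^2 k^{k/2} c^{k-2}$ and subgamma MGF control with variance proxy $\sigma^2$ and scale $c$ (see, e.g., Chapter 2 of Boucheron et al.), which is the same equivalence used implicitly in the statement and proof of Lemma~\ref{lem:score_subgamma}.

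Since the argument is essentially an algebraic substitution on top of the already-proved Lemma~\ref{lem:score_subgamma}, there is no real technical obstacle; the only mildly delicate step is choosing the substitution $u = R^{-1/2} \I_R^{-1} M^{1/2} v$ so that the three $\I_R$-type factors inside the lemma's quadratic form collapse to the clean expression $v^T M^{1/2} \I_R^{-1} M^{1/2} v$.
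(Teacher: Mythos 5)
Your reduction is essentially the paper's own (implicit) proof: the corollary is stated without a separate argument precisely because it follows from Lemma~\ref{lem:score_subgamma} by the substitution $u = R^{-1/2}\I_R^{-1}M^{1/2}v$ together with homogeneity in $\|u\|$, and your computation of the variance factor $u^T R^{1/2}\I_R R^{1/2}u = v^T M^{1/2}\I_R^{-1}M^{1/2}v$ is correct.

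The one step you should not wave through is the identification of the scale parameter. What your substitution actually yields is $1.6\|u\| = 1.6\,\|R^{-1/2}\I_R^{-1}M^{1/2}v\|$, and in general $\|Av\| \neq \|A^{T}v\|$, so ``the transpose just reverses the order'' does not justify $\|R^{-1/2}\I_R^{-1}M^{1/2}v\| = \|M^{1/2}\I_R^{-1}R^{-1/2}v\|$; the corollary as literally stated has the scale matrix acting on $v$ in the transposed order relative to what the substitution produces (this imprecision is inherited from the paper's statement itself). The discrepancy is immaterial where the corollary is used, since Lemma~\ref{lem:score_inversion_estimation} and Theorem~\ref{thm:subgamma_norm_concentration} only invoke the operator and Frobenius norms of the scale matrix, which are transpose-invariant; but to make your write-up airtight you should either state the scale as $1.6\|R^{-1/2}\I_R^{-1}M^{1/2}v\|$ (equivalently take $C = R^{-1/2}\I_R^{-1}M^{1/2}$ and note $\|C\|=\|C^{T}\|$, $\|C\|_F=\|C^{T}\|_F$) or remark explicitly that only these transpose-invariant quantities are needed downstream. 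The remaining ingredients---rescaling the lemma to non-unit directions and the standard moment-to-subgamma equivalence---are fine.
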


Lemmas~\ref{lem:score_helper_1}~and~\ref{lem:score_helper_2} proved next are helper lemmas that we make use of to prove the main result of this section, Lemma~\ref{lem:shifted_score_subgamma}, which shows that every one dimensional projection of $s_R(x+\eps)$ for $x\sim f_R$ is subgamma.
\begin{lemma}
\label{lem:score_helper_1}
    Let $s_R : \R^d \to \R^d$ be the score function of an $R$-smoothed distribution $f_R$ with Fisher information matrix $\I_R$. For any fixed $v \in \R^d$ with $\|v\| = 1$, $x \in \R^d$, $k \ge 3$, and $\eps \in \R^d$ with $0 \le \eps^T R^{-1} \eps \le 1/4$, if $v^T R^{1/2} s_R(x + \eps) \ge \max(2 \sqrt{k} + 2, 9.5),$ then, for $\gamma = R^{1/2} v$,
    $$f_R(x) |\gamma^T s_R(x + \eps)|^k \le \frac{1}{5} \max\left(f_R(x - \eps) |\gamma^T s_R(x - \eps)|^k, f_R(x + \eps + \gamma) |\gamma^T s_R(x + \eps + \gamma)|^k\right)$$
\end{lemma}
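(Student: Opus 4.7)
Let $q := \gamma^T s_R(x+\eps)$, and note that $\gamma^T R^{-1}\gamma = v^T v = 1$ while $\eps^T R^{-1}\eps \leq 1/4$ by hypothesis. I would first apply Jensen's inequality to the identity $f_R(y+\delta)/f_R(y) = \E_{Z_R|y}[e^{\delta^T R^{-1}Z_R - \frac{1}{2}\delta^T R^{-1}\delta}]$ from Lemma~\ref{lem:smoothed_score}: taking $y = x+\eps,\, \delta = \gamma$ yields $f_R(x+\eps+\gamma) \geq f_R(x+\eps)\, e^{q - 1/2}$, and taking $y = x,\, \delta = \pm\eps$ yields $f_R(x \pm \eps) \geq f_R(x)\, e^{\pm\eps^T s_R(x) - 1/8}$. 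Chaining the first two gives $f_R(x+\eps+\gamma) \geq f_R(x)\, e^{\eps^T s_R(x) + q - 5/8}$. From Lemma~\ref{lem:score_jacobian}, integrating $\J_{s_R} \succcurlyeq -R^{-1}$ along the $+\gamma$ segment gives $\gamma^T s_R(x+\eps+\gamma) \geq q - 1$, and standard estimates give $(1-1/q)^k \geq e^{-k/(q-1)} \geq e^{-\sqrt{k}/2}$ for $q \geq 2\sqrt{k}+1$.

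\textbf{Case analysis on $\eps^T s_R(x)$.} I would split on the value of $\eps^T s_R(x)$, using a threshold $A$ just below $3\sqrt{k}/2$. In Case A, $\eps^T s_R(x) \geq -A$: the chain above yields $C_2 := f_R(x+\eps+\gamma)|\gamma^T s_R(x+\eps+\gamma)|^k \geq f_R(x)\, q^k \cdot e^{-A + q - 5/8 - \sqrt{k}/2}$. Both the $q \geq 2\sqrt{k}+2$ bound (binding for large $k$) and the $q \geq 9.5$ bound (binding for small $k$) in the hypothesis are exactly what is needed so that this exponent exceeds $\log 5$, giving $C_2 \geq 5 f_R(x)\, q^k$ as required. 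In Case B, $\eps^T s_R(x) < -A$: the bound $f_R(x-\eps) \geq f_R(x)\, e^{A - 1/8}$ provides a prefactor of order $e^{3\sqrt{k}/2}$ in $C_1 := f_R(x-\eps)|\gamma^T s_R(x-\eps)|^k$, which should compensate for any deficit of $|\gamma^T s_R(x-\eps)|$ relative to $q$.

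\textbf{Main obstacle.} The subtle step is controlling $|\gamma^T s_R(x-\eps)|^k$ in Case B: unlike the $+\gamma$ direction, the lateral $-2\eps$ shift admits no clean one-sided bound, since $\J_{s_R}$ has no a priori upper bound. I would address this through the PSD property $\J_{s_R} + R^{-1} \succcurlyeq 0$ together with Cauchy--Schwarz for PSD bilinear forms, $|\gamma^T(\J_{s_R}+R^{-1})\eps|^2 \leq \gamma^T(\J_{s_R}+R^{-1})\gamma \cdot \eps^T(\J_{s_R}+R^{-1})\eps$, integrated along the segment from $x+\eps$ to $x-\eps$. Observing that $\int_0^1 \gamma^T \J_{s_R}(x+\eps-2t\eps)\eps\,dt = (q - \gamma^T s_R(x-\eps))/2$, if $|\gamma^T s_R(x-\eps)|$ were substantially smaller than $q$ in magnitude, then the cross integral is large, forcing $\int_0^1 \eps^T(\J_{s_R}+R^{-1})\eps\,dt$ to be correspondingly large; combined with the monotonicity of $\eps^T s_R$ this would be inconsistent with $\eps^T s_R(x) < -A$. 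Balancing the quantitative relationship between this forced contribution and the threshold $A$ is the technically delicate portion of the argument, and is the principal reason the hypothesis demands both $q \geq 2\sqrt{k}+2$ and $q \geq 9.5$.
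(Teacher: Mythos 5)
Your Case A is essentially the paper's first case (Jensen's inequality in the $+\gamma$ direction via Lemma~\ref{lem:smoothed_score}, plus the one-sided Jacobian bound of Lemma~\ref{lem:score_jacobian}), and that part is fine up to tuning of constants (note the margin is razor-thin: with $A=\tfrac32\sqrt{k}$, $k=16$, $q=10$ the exponent $q-A-\tfrac58-\tfrac{k}{q-1}\approx 1.60<\log 5$, so the ``just below $3\sqrt k/2$'' slack is really needed and eats into Case B). The genuine gap is your Case B mechanism. The Cauchy--Schwarz step $\bigl|\int_0^1 \gamma^T H\eps\,dt\bigr|^2\le\bigl(\int_0^1\gamma^T H\gamma\,dt\bigr)\bigl(\int_0^1\eps^T H\eps\,dt\bigr)$ with $H=\J_{s_R}+R^{-1}\succcurlyeq 0$ only forces $\int\eps^T H\eps$ to be large if you also have an \emph{upper} bound on $\int_0^1\gamma^T H(x+\eps-2t\eps)\gamma\,dt$, and no such bound is available: Lemma~\ref{lem:score_jacobian} is one-sided, and the Hessian of $\log f_R$ has no upper bound (for a smoothed two-component mixture it scales like $\mathrm{separation}^2/r^4$ between the modes). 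Because the path runs along $\eps$, not along $\gamma$, the $\gamma$-quadratic integral is not a difference of score values at the endpoints and is simply uncontrolled; a huge $\gamma^T H\gamma$ term can absorb a cross integral of size $\Theta(q)$ while $\int\eps^T H\eps$ stays tiny, so $\gamma^T s_R(x-\eps)$ can collapse with none of your hypotheses violated. There is a second, independent problem: even if $\int\eps^T H\eps$ were large, that is not inconsistent with $\eps^T s_R(x)<-A$. The bound $\J_{s_R}\succcurlyeq -R^{-1}$ only prevents $\eps^T s_R$ from \emph{increasing} by more than $\eps^TR^{-1}\eps\le\tfrac14$ per $\eps$-step; it may decrease arbitrarily fast, so a large drop of $\eps^T s_R$ between $x$ and $x-\eps$ is perfectly compatible with the Case B hypothesis, and the contradiction you are hoping for never materializes.

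For comparison, the paper's proof is organized precisely so that no cross derivative $\gamma^T\J_{s_R}\eps$ ever appears. It splits on the density ratio $\alpha=f_R(x)/f_R(x+\eps)$ rather than on $\eps^T s_R(x)$: when $\log\alpha<\tfrac34\gamma^Ts_R(x+\eps)-2$ it argues as in your Case A, and otherwise it gains density at $x-\eps$ directly from the identity of Lemma~\ref{lem:smoothed_score}, via $f_R(x-\eps)/f_R(x+\eps)=\E_{Z_R|x+\eps}[y^2]\ge\E_{Z_R|x+\eps}[y]^2=\alpha^2e^{-\eps^TR^{-1}\eps}$ with $y=e^{-\eps^TR^{-1}Z_R-\eps^TR^{-1}\eps}$, and then controls scores only through quadratic forms in the displacement direction itself: concavity of $-y\log y$ plus Jensen gives $\eps^Ts_R(x)\le-\log\alpha+\tfrac12\eps^TR^{-1}\eps$, and Lemma~\ref{lem:score_jacobian} is applied along $\pm\eps$ (respectively $+\gamma$) only. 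That structural feature --- every Jacobian quadratic form is taken along the direction being traversed, which is exactly what $\J_{s_R}\succcurlyeq-R^{-1}$ controls --- is what your Case B lacks. To repair your argument you would need to re-organize the case split around the density ratio (or otherwise arrange that only same-direction quadratic forms appear), rather than trying to track $\gamma^Ts_R(x-\eps)$ through lateral cross terms.
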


\begin{proof}
    Let $\alpha := \frac{f_R(x)}{f_R(x+\eps)}$. By Lemma~\ref{lem:smoothed_score}, we have
    \begin{align}
    \label{eq:alpha_helper_lemma}
    \alpha = \E_{Z_R | x + \eps}\left[e^{-\eps^TR^{-1} Z_R - \frac{1}{2}\eps^TR^{-1} \eps } \right]
    \end{align}
    Let $\gamma = R^{1/2} v$. We will consider two cases
    
    \textbf{When $\log \alpha < \frac{3}{4} \gamma^T  s_R(x + \eps) - 2$}. First, by Lemma~\ref{lem:smoothed_score} and Jensen's inequality, we have
    $$\frac{f_R(x + \eps + \gamma)}{f_R(x + \eps)} \ge e^{\gamma^T s_R(x + \eps) - 1/2}$$
    Also, by Lemma~\ref{lem:score_jacobian}, we have
    $$\gamma^T s_R(x + \eps + \gamma) \ge \gamma^T s_R(x+\eps) - 1$$
    So, 
    \begin{align*}
    f_R(x+\eps + \gamma) |\gamma^T s_R(x + \eps + \gamma)|^k &\ge f_R(x + \eps) |\gamma^Ts_R(x+\eps)|^k e^{\gamma^T s_R(x+\eps) - \frac{1}{2} } \left(1 - \frac{1}{\gamma^T(s_R(x+\eps)} \right)^k\\
    &\ge f_R(x+\eps) |\gamma^T s_R(x+\eps)|^k e^{\gamma^T s_R(x+\eps) - \frac{k}{\gamma^T s_R(x+\eps) - 1} - \frac{1}{2}}
    \end{align*}
    Since $\gamma^T s_R(x+\eps) \ge 2\sqrt{k} + 2$, $$f_R(x+\eps+\gamma) |s_R(x+\eps+\gamma)|^k \ge f_R(x+\eps) |s_R(x+\eps)|^k e^{\frac{3}{4} \gamma^T s_R(x+\eps)} $$
    So, since $$\alpha = \frac{f_R(x)}{f_R(x+\eps)} \le e^{\frac{3}{4} \gamma^T s_R(x+\eps) - 2}$$
    we have
    $$f(x) |s_R(x+\eps)|^k = \alpha f_R(x+\eps) |s_R(x+\eps)|^k \le \frac{1}{5} f_R(x+\eps+\gamma)|s_R(x+\eps+\gamma)|^k$$
    \textbf{When $\log \alpha > \frac{3}{4} \gamma^T s_R(x+\eps) - 2$.} Evaluating \eqref{eq:alpha_helper_lemma} at $x - \eps$ gives
    $$\frac{f_R(x - \eps)}{f_R(x)} = \E_{Z_R | x} \left[ e^{-\eps^T R^{-1} Z_R - \frac{1}{2}\eps^T R^{-1} \eps}\right]$$
    Taking the gradient wrt $\eps$, we have
    $$\frac{\grad f_R(x - \eps)}{f_R(x)} = \E_{Z_R | x} \left[R^{-1}(Z_R + \eps) e^{-\eps^T R^{-1} Z_R - \frac{1}{2}\eps^T R^{-1} \eps} \right]$$
    so evaluating at $x + \eps$,
    $$\frac{\grad f_R(x)}{f_R(x + \eps)} = \E_{Z_R | x + \eps} \left[R^{-1} (Z_R + \eps) e^{-\eps^T R^{-1} Z_R - \frac{1}{2}\eps^T R^{-1} \eps} \right]$$
    In particular,
    $$\eps^T \frac{\grad f_R(x)}{f_R(x+\eps)} = \E_{Z_R | x + \eps}\left[\eps^T R^{-1} (Z_R + \eps) e^{-\eps^T R^{-1} Z_R - \frac{1}{2} \eps^T R^{-1} \eps} \right]$$
    Define $y = e^{-\eps^T R^{-1} Z_R -\eps^T R^{-1} \eps}$ so that $\E_{Z_R | x + \eps} [y] = \alpha e^{-\frac{1}{2} \eps^T R^{-1} \eps}$, and
    $$\eps^TR^{-1}(Z_R + \eps) e^{-\eps^T R^{-1} Z_R - \frac{1}{2} \eps^T R^{-1} \eps} =  - e^{\frac{1}{2} \eps^T R^{-1} \eps} y \log y$$
    is concave, so by Jensen's inequality,
$$\eps^T \frac{ \grad f_R(x)}{f_R(x+\eps)} \le - e^{\frac{1}{2} \eps^T R^{-1} \eps} \left(e^{-\frac{1}{2}\eps^T R^{-1} \eps} \alpha \right) \log \left( e^{-\frac{1}{2}\eps^T R^{-1} \eps} \alpha \right) = - \alpha \log \alpha + \frac{1}{2} \alpha \eps^T R^{-1} \eps$$
So,
$$\eps^T s_R(x) = \eps^T \frac{\grad f_R(x)}{f_R(x)} \le - \log \alpha + \frac{1}{2} \eps^T R^{-1} \eps$$
Finally we consider the move to $x - \eps$. By Lemma~\ref{lem:score_jacobian}, we have
$$\eps^T s_R(x - \eps) \le s_R(x) + \eps^T R^{-1} \eps \le - \log \alpha + \frac{3}{2} \eps^T R^{-1} \eps$$
By Lemma~\ref{lem:smoothed_score},
$$\frac{f_R(x - \eps)}{f_R(x+ \eps)} = \E_{Z_R | x + \eps} \left[ e^{- 2 \eps^T R^{-1} Z_R - 2 \eps^T R^{-1} \eps} \right] = \E_{Z_R | x + \eps} [y^2] \ge \E_{Z_R | x+\eps} [y]^2 = \alpha^2 e^{-\eps^T R^{-1} \eps}$$
Since $\log \alpha \ge \frac{3}{4} \gamma^T s_R(x + \eps) - 2$,
$$-\eps^T s_R(x - \eps) \ge \frac{3}{4} \gamma^T s_R(x+\eps) - 2 - \frac{3}{2} \eps^T R^{-1} \eps \ge \frac{3}{4} \gamma^T s_R(x + \eps) - \frac{19}{8} \ge \gamma^T s_R(x)$$
where the second inequality comes from the fact that $\frac{3}{4}\gamma^T s_R(x + \eps) - 2 >0$, so that the function is decreasing in $\eps^T R^{-1} \eps$, and $\eps^T R^{-1} \eps \le 1/4$.
Thus, 
$$f_R(x-\eps) |\gamma^T s_R(x - \eps)|^k \ge \alpha e^{-\eps^T R^{-1} \eps} f_R(x) |s_R(x + \eps)|^k$$
Since our assumptions give $\alpha e^{-\eps^T R^{-1} \eps} \ge 5$, we get the result.
\end{proof}

\begin{lemma}
\label{lem:score_helper_2}
    Let $s_R : \R^d \to \R^d$ be the score function of an $R$-smoothed distribution $f_R$ with Fisher information matrix $\I_R$. 
    
    For any fixed $v \in \R^d$ with $\|v\| =1$, $x \in \R^d, k \ge 3$ and $\eps \in \R^d$ with $1/4 \le \eps^T R^{-1} \eps \le 0$, if $v^T R^{1/2} s_R(x+\eps) \ge \alpha$ for $\alpha = 2 + 1.2 \sqrt{k}$, then we have for $\gamma = R^{1/2} v$,
    $$f_R(x) |\gamma^T s_R(x+\eps)|^k \le \frac{1}{4} \left(f_R(x - \gamma)|\gamma^T s_R(x + \eps - \gamma)|^k + f_R(x + \gamma) |s_R(x + \eps + \gamma)|^k \right)$$

    As an immediate corollary, the statement is also true when $0 \le \eps^T R^{-1} \eps \le 1/4$ and $v^T R^{1/2} s_R(x) \le - \alpha$.
\end{lemma}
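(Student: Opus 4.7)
The plan is to mirror the proof of Lemma~\ref{lem:score_subgamma}, with the score evaluated at the shifted point $x+\eps$ rather than at $x$. The strategy is to show that at least one of the two summands on the right-hand side majorizes $4f_R(x)|\gamma^T s_R(x+\eps)|^k$; the sum of the two summands then yields the claimed factor of $\tfrac{1}{4}$. The two core tools are the Jacobian bound of Lemma~\ref{lem:score_jacobian} applied at the base point $x+\eps$, giving $\gamma^T s_R(x+\eps \pm \gamma) \geq \gamma^T s_R(x+\eps) - 1$ (since $\gamma^T R^{-1}\gamma = 1$) and hence $|\gamma^T s_R(x+\eps+\gamma)|^k \geq (|\gamma^T s_R(x+\eps)| - 1)^k$, and the Jensen-based density ratio bound $f_R(x \pm \gamma)/f_R(x) \geq e^{\pm \gamma^T s_R(x) - 1/2}$ from Lemma~\ref{lem:smoothed_score}.

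If $\gamma^T s_R(x)$ were comparable to $\gamma^T s_R(x+\eps)$, combining these two facts would immediately give a factor of $4$ on the $+\gamma$ branch, exactly as in Lemma~\ref{lem:score_subgamma}. The main obstacle is that the difference $\gamma^T s_R(x+\eps) - \gamma^T s_R(x) = \int_0^1 \gamma^T \J_{s_R}(x + t\eps)\eps \, dt$ involves the off-diagonal Jacobian element $\gamma^T \J_{s_R} \eps$, which is not directly controlled by the PSD lower bound $\J_{s_R} \succcurlyeq -R^{-1}$ of Lemma~\ref{lem:score_jacobian}.

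To circumvent this, the plan is to adopt the case split from Lemma~\ref{lem:score_helper_1} on $\log \alpha := \log[f_R(x)/f_R(x+\eps)]$. In the ``small $\log \alpha$'' regime (say $\log \alpha \leq \tfrac{3}{4}\gamma^T s_R(x+\eps) - O(1)$), the Jensen density bound applied to the $\eps$-shift combined with the hypothesis will force $\gamma^T s_R(x)$ to be close enough to $\gamma^T s_R(x+\eps)$ that the $+\gamma$ branch dominates by repeating Lemma~\ref{lem:score_subgamma}'s calculation with an additional $O(1)$ slack. In the complementary ``large $\log\alpha$'' regime, I would follow Case 2 of Lemma~\ref{lem:score_helper_1}: the concavity of $-y\log y$ combined with Jensen's inequality applied to the MGF identity in Lemma~\ref{lem:smoothed_score} yields $\eps^T s_R(x) \leq -\log \alpha + \tfrac{1}{2}\eps^T R^{-1}\eps$; propagating this via the Jacobian bound to the neighborhood of $x-\gamma$ gives a lower bound on $f_R(x-\gamma)/f_R(x)$ exponentially large in $\gamma^T s_R(x+\eps)$, and combined with the Jacobian control on $\gamma^T s_R(x+\eps-\gamma)$ this makes the $-\gamma$ branch dominate by a factor of $\geq 4$.

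The hardest step will be the large-$\log\alpha$ case, where the $-y \log y$ trick must be carefully stitched together with the Jacobian bound to convert the density-gradient information along $\eps$ into a usable bound on $f_R(x-\gamma)/f_R(x)$, while simultaneously ensuring that the score factor $|\gamma^T s_R(x+\eps-\gamma)|^k$ on the RHS is not pathologically small (the Jacobian bound by itself only gives an upper bound on this quantity in our setting). The choice $\alpha = 2 + 1.2\sqrt{k}$ is tight enough that in either branch the exponential density-ratio gain absorbs the polynomial score-shift loss $(1 - 1/\gamma^T s_R(x+\eps))^k \geq e^{-O(\sqrt{k})}$. Finally, the corollary for $v^T R^{1/2} s_R(x) \leq -\alpha$ follows by applying the main statement after the substitution $x \mapsto x + \eps$, $\eps \mapsto -\eps$, together with the sign flip $v \mapsto -v$.
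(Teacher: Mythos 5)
There is a genuine gap: what you have written is a plan whose decisive steps are exactly the ones left open, and the route you chose is considerably harder than what the lemma requires. The paper's proof does \emph{not} case split on $\log\alpha = \log\frac{f_R(x)}{f_R(x+\eps)}$ at all. The (garbled) sign condition on $\eps$ in the statement --- ``$\eps^T R^{-1}\eps \le 0$'', i.e.\ the shift is anti-aligned with $\gamma = R^{1/2}v$ --- is precisely what makes the cross term you worried about harmless: along such a shift the Jacobian bound of Lemma~\ref{lem:score_jacobian} \emph{does} control it (e.g.\ for $\eps = -c\,R^{1/2}v$ with $0 \le c \le 1$ one has $\gamma^T \J_{s_R}\eps \le c\,\gamma^T R^{-1}\gamma \le 1$), giving $\gamma^T s_R(x) \ge \gamma^T s_R(x+\eps) - 1$ directly. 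With that one inequality the argument is the same one-branch computation as Lemma~\ref{lem:score_subgamma}: $f_R(x+\gamma) \ge f_R(x)e^{\gamma^T s_R(x)}/\sqrt{e}$ by Lemma~\ref{lem:smoothed_score} and Jensen, $\gamma^T s_R(x+\eps+\gamma) \ge \gamma^T s_R(x+\eps) - 1 > 0$, and then $\frac{1}{\sqrt e}\,e^{\gamma^T s_R(x+\eps)-1}\bigl(1 - \tfrac{1}{\gamma^T s_R(x+\eps)}\bigr)^k \ge e^{-3/2}e^{\alpha - 1.4k/\alpha} \ge 4$ for $\alpha = 2 + 1.2\sqrt{k}$, so the single summand $f_R(x+\gamma)|\gamma^T s_R(x+\eps+\gamma)|^k$ already dominates $4 f_R(x)|\gamma^T s_R(x+\eps)|^k$. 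You dismissed this step as unavailable because you treated $\eps$ as an arbitrary shift, thereby setting out to prove a stronger statement than the lemma (the sign restriction, together with the $v \mapsto -v$ symmetry that yields the corollary, is how the paper gets what it needs in Lemma~\ref{lem:shifted_score_subgamma}); your corollary derivation ($x \mapsto x+\eps$, $\eps \mapsto -\eps$, $v \mapsto -v$) also returns an inequality with densities at $x+\eps\pm\gamma$ and scores at $x\pm\gamma$, i.e.\ with the roles swapped relative to the stated display --- the intended reduction is just $v \mapsto -v$ with $x,\eps$ fixed.

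The substitute case analysis you propose does not close, and the obstacles are not mere bookkeeping. In your ``large $\log\alpha$'' branch, the $-y\log y$/Jensen argument of Lemma~\ref{lem:score_helper_1} controls $\eps^T s_R(x)$, the score in the direction of the \emph{shift}; it says nothing about $\gamma^T s_R(x)$, which is what you would need to make $f_R(x-\gamma)/f_R(x) \ge e^{-\gamma^T s_R(x) - 1/2}$ exponentially large, and there is no available lower bound on $|\gamma^T s_R(x+\eps-\gamma)|$ (it can be arbitrarily close to zero), so the $-\gamma$ summand may carry no weight --- you flag both issues but offer no mechanism to resolve them. In your ``small $\log\alpha$'' branch, porting Case~1 of Lemma~\ref{lem:score_helper_1} naturally produces the comparison term $f_R(x+\eps+\gamma)|\gamma^T s_R(x+\eps+\gamma)|^k$, whereas the right-hand side of Lemma~\ref{lem:score_helper_2} has the density at $x+\gamma$; converting $f_R(x+\eps+\gamma)$ into $f_R(x+\gamma)$ reintroduces exactly the uncontrolled $\eps$-direction score you set out to avoid, and your assertion that small $\log\alpha$ ``forces $\gamma^T s_R(x)$ to be close to $\gamma^T s_R(x+\eps)$'' is unsupported --- a density ratio between two points does not control the difference of directional scores at those points. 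So as written the proposal is an outline of a harder (and likely unprovable-as-stated, for arbitrary $\eps$) statement rather than a proof of the lemma; the missing ingredient is the use of the sign condition on $\eps$ together with Lemma~\ref{lem:score_jacobian} to compare $\gamma^T s_R(x)$ with $\gamma^T s_R(x+\eps)$.
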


\begin{proof}
    By Lemma~\ref{lem:smoothed_score} and Jensen's inequality,
    $$f_R(x + \gamma) \ge f_R(x) e^{\gamma^T s_R(x)}/\sqrt{e}$$
    By Lemma~\ref{lem:score_jacobian}, we have that
    $$\gamma^T s_R(x + \eps + \gamma) \ge \gamma^T s_R(x + \eps) - 1$$
    Since the right hand side is positive by assumption, we have
    $$|\gamma^T s_R(x + \eps + \gamma)| \ge |\gamma^T s_R(x + \eps)| - 1$$
    Now, when $\eps^T R^{-1} \eps < 0$, we have by Lemma~\ref{lem:score_jacobian}, and since $|\eps^T R^{-1} \eps| \le 1$ that
    $$\gamma^T s_R(x) \ge \gamma^T s_R(x + \eps) - 1$$
    So,
    \begin{align*}
        f_R(x + \gamma) |\gamma^T s_R(x + \eps + \gamma)|^k &\ge \frac{1}{\sqrt{e}} f_R(x) e^{\gamma^T s_R(x)}\left(|\gamma^Ts_R(x+\eps)| - 1  \right)^k\\
        &\ge \frac{1}{\sqrt{e}} f_R(x) e^{\gamma^T s_R(x+\eps) - 1}\left(|\gamma^T s_R(x+\eps)| - 1 \right)^k\\
        &\ge f_R(x) |\gamma^T s_R(x+\eps)|^k \left(\frac{1}{\sqrt{e}}e^{\gamma^T s_R(x+\eps) - 1} \left(1 - \frac{1}{|\gamma^T s_R(x+\eps)|} \right)^k \right)\\
        &\ge f_R(x) |\gamma^T s_R(x+\eps)|^k \cdot \left(e^{-3/2}e^{\alpha - 1.4k/\alpha} \right)\\
        &\ge f_R(x) |\gamma^T s_R(x+\eps)|^k \cdot 4
    \end{align*}
\end{proof}

We are now ready to prove that every $1$-dimensional projection of $s_R(x+\eps)$ for $x \sim f_R$ is subgamma with appropriate variance and scale. As a corollary (Corollary~\ref{cor:shifted_score_subgamma}), we obtain that every $1$-dimensional projection of $s_R(x+\eps)$ when transformed by applying a symmetric PSD matrix is also subgamma, with appropriately transformed variance and scale.

\begin{lemma}
\label{lem:shifted_score_subgamma}
    Let $s_R$ be the score function of an $R$-smoothed distribution $f_R$ with Fisher information matrix $\I_R$. For $k \ge 3$ and $\eps \in \R^d$ such that $|\eps^T R^{-1} \eps| \le 1/4$, we have that for any $v \in \R^d$ with $\|v\| = 1$,
    $$\E_{x \sim f_R}\left[|v^T R^{1/2} s_R(x+\eps)|^k \right]\le (15)^{k-2} k^{k/2}\max\left(\E_{x \sim f_R}[v^T R^{1/2} s_R(x+\eps) s_R(x+\eps)^T R^{1/2} v], v^T R^{1/2} \I_R R^{1/2} v \right) $$
    Equivalently, $v^T R^{1/2} s_R(x+\eps)$ is a subgamma random variable.
    $$v^T R^{1/2}s_R(x+\eps) \in \Gamma\left(\max\left(\E_{x \sim f_R}[v^T R^{1/2} s_R(x+\eps) s_R(x+\eps)^T R^{1/2} v], v^T R^{1/2} \I_R R^{1/2} v \right), 15 \right)$$
\end{lemma}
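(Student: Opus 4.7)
The strategy is to mirror the proof of the unshifted case in Lemma~\ref{lem:score_subgamma}, splitting the $k$-th moment integral into a bounded region and two tails, and handling each tail via Lemma~\ref{lem:score_helper_1}. Set $\gamma := R^{1/2} v$, $\alpha := \max(2\sqrt{k}+2,\,9.5)$, $V_1 := \E_{x \sim f_R}[v^T R^{1/2} s_R(x+\eps) s_R(x+\eps)^T R^{1/2} v]$, and $V_2 := v^T R^{1/2} \I_R R^{1/2} v$. Writing $M := \E_{x \sim f_R}[|\gamma^T s_R(x+\eps)|^k]$, I would decompose
\[
M = \int_{|\gamma^T s_R(x+\eps)| \le \alpha} + \int_{\gamma^T s_R(x+\eps) > \alpha} + \int_{\gamma^T s_R(x+\eps) < -\alpha},
\]
each integrated against $f_R(x)\,dx$. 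On the bounded region, the inequality $|y|^k \le \alpha^{k-2}|y|^2$ bounds the contribution by $\alpha^{k-2} V_1$.

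For the positive tail, Lemma~\ref{lem:score_helper_1} provides the pointwise inequality
\[
f_R(x)|\gamma^T s_R(x+\eps)|^k \le \tfrac{1}{5}\max\bigl(f_R(x-\eps)|\gamma^T s_R(x-\eps)|^k,\ f_R(x+\eps+\gamma)|\gamma^T s_R(x+\eps+\gamma)|^k\bigr).
\]
The crucial observation is that under the substitutions $y = x - \eps$ and $y = x + \eps + \gamma$, respectively, both shifted integrals collapse to the \emph{unshifted} moment $M_0 := \E_{x \sim f_R}[|\gamma^T s_R(x)|^k]$, so the positive-tail contribution is at most $\tfrac{2}{5} M_0$. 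For the negative tail, I would apply Lemma~\ref{lem:score_helper_1} with $v$ replaced by $-v$, which preserves $\eps^T R^{-1}\eps$ and $|\gamma^T\cdot|^k$ but flips the sign of $\gamma$; the analogous pointwise bound with shifts $y = x - \eps$ and $y = x + \eps - \gamma$ again reduces to $M_0$, contributing another $\tfrac{2}{5}M_0$.

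Combining these pieces yields $M \le \alpha^{k-2} V_1 + \tfrac{4}{5} M_0$, and Lemma~\ref{lem:score_subgamma} supplies $M_0 \le 1.6^{k-2} k^{k/2} V_2$. A routine numerical check confirms that $\alpha^{k-2} + \tfrac{4}{5}\cdot 1.6^{k-2} k^{k/2} \le 15^{k-2} k^{k/2}$ for every $k \ge 3$, delivering the claimed moment bound $M \le 15^{k-2} k^{k/2} \max(V_1, V_2)$. The equivalent subgamma statement $v^T R^{1/2} s_R(x+\eps) \in \Gamma(\max(V_1,V_2),\,15)$ then follows by the standard passage from polynomial moments of Gaussian-type growth to a subgamma MGF bound.

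The main obstacle is choosing the right shifts so that each tail contribution reduces to $M_0$ rather than to $M$ itself: this is precisely why the plan uses Lemma~\ref{lem:score_helper_1} (and its $-v$ counterpart) for both tails rather than Lemma~\ref{lem:score_helper_2}, whose shifts by $\pm\gamma$ keep the $\eps$ inside $s_R$ and therefore integrate back to $M$, producing a self-referential inequality instead of a closed-form bound. A secondary bookkeeping obstacle is verifying the absorption of constants, i.e., that $(2\sqrt{k}+2)^{k-2}$ and $1.6^{k-2} k^{k/2}$ both fit comfortably under $15^{k-2} k^{k/2}$ uniformly in $k \ge 3$.
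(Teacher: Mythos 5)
Your proposal is correct, and it deviates from the paper's proof in one interesting place. The paper uses the same decomposition and handles the bounded region and the positive tail exactly as you do (Lemma~\ref{lem:score_helper_1}, change of variables, each term of the max integrating to the unshifted moment $M_0$, then Lemma~\ref{lem:score_subgamma}). For the negative tail, however, the paper does \emph{not} symmetrize Lemma~\ref{lem:score_helper_1}; it uses Lemma~\ref{lem:score_helper_2}, whose shifted terms do integrate back to $M$ itself, and resolves the resulting self-reference through the exact identity
\[
M \;=\; 2\int \Bigl( f_R(x-\eps)|\gamma^T s_R(x)|^k - \tfrac14 f_R(x-\eps-\gamma)|\gamma^T s_R(x-\gamma)|^k - \tfrac14 f_R(x-\eps+\gamma)|\gamma^T s_R(x+\gamma)|^k \Bigr)\, dx,
\]
dropping the negative-tail integrand pointwise (this is the same trick used inside the proof of Lemma~\ref{lem:score_subgamma}). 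So your remark that the helper-2 route ``produces a self-referential inequality instead of a closed-form bound'' slightly mischaracterizes it: that route does not fail, it simply costs a factor of $2$ on the bounded-region and positive-tail terms, which the paper absorbs into the $15^{k-2}$. Your alternative — applying Lemma~\ref{lem:score_helper_1} with $v$ replaced by $-v$ — is legitimate, since that lemma is stated for every unit vector with no constraint tying $v$ to $\eps$, and it yields the cleaner bound $M \le \alpha^{k-2}V_1 + \tfrac45 M_0$ without the doubling, with constants that check out for all $k\ge 3$ exactly as you claim. One point both you and the paper treat at the same (somewhat informal) level: the final passage from the $k$-th moment bounds to the stated $\Gamma(\cdot,\cdot)$ membership is asserted rather than carried out, and the shifted score is not mean-zero, so a fully rigorous version would need to center and invoke the standard moments-to-MGF subgamma conversion; since the paper's own proof stops at the same point, this is not a gap relative to it.
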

\begin{proof}
    Without loss of generality, we only show the $\eps^T R^{-1} \eps \ge 0$ case. As before, let $\gamma = R^{1/2} v$. Using Lemma~\ref{lem:score_helper_1} and Lemma~\ref{lem:score_subgamma}, we have
    \begin{align*}
        &\int_{[-\infty, \infty]^d} f_R(x-\eps) |\gamma^T s_R(x)|^k \1_{\gamma^T s_R(x)> \max(2 \sqrt{k} + 2, 9.5)} dx\\
        &\le \int_{[-\infty, \infty]^d} \frac{1}{5} \max\left(f_R(x - 2\eps) |\gamma^T s_R(x - 2\eps)|^k, f_R(x+ \gamma) |\gamma^T s_R(x+\gamma)|^k \right)dx\\
        &= \frac{2}{5} \E_{x \sim f_R}[|\gamma^T s_R(x)|^k]\\
        &\le \frac{2}{5} (1.6)^{k-2} k^{k/2} (\gamma^T \I_R \gamma)
    \end{align*}
    Then, we can start bounding the $k^{th}$ moment quantity in the lemma. Using Lemma~\ref{lem:score_helper_2}, we have
    \begin{align*}
        &\E_{x \sim f_R} \left[|\gamma^T s_R(x+\eps)|^k \right] = \int_{[-\infty, \infty]^d} f_R(x - \eps) |\gamma^T s_R(x)|^k dx\\
        &= 2 \int f_R(x - \eps)|\gamma^T s_R(x)|^k - \frac{1}{4} f_R(x - \eps - \gamma) |\gamma^T s_R(x - \gamma)|^k - \frac{1}{4} f_R(x - \eps + \gamma) |\gamma^T s_R(x+\gamma)|^k dx\\
        &\le \int f_R(x - \eps) |\gamma^Ts_R(x)|^k \1_{\gamma^Ts_R(x) \ge - \max(2 \sqrt{k}+2, 9.5)} dx
    \end{align*}
    Now, using the previous claim, we get
    \begin{align*}
        &\E_{x \sim f_R} \left[ |\gamma^T s_R(x+\eps)|^k\right]\\
        &\le 2 \int f_R(x - \eps) |\gamma^T s_R(x)|^k \1_{|\gamma^T s_R(x)|\le \max(2 \sqrt{k} + 2, 9.5) } dx + \frac{4}{5} (1.6)^{k-2} k^{k/2} (\gamma^T \I_R \gamma)\\
        &\le 2 \int f_R(x - \eps) |\gamma^T s_R(x)|^2(\max(2 \sqrt{k} + 2, 9.5))^{k-2}\1_{|\gamma^T s_R(x)| \le \max(2 \sqrt{k}+2, 9.5)} dx  + \frac{4}{5} (1.6)^{k-2} k^{k/2}(\gamma^T \I_R \gamma)\\
        &\le 2 \max(2 \sqrt{k} +2, 9.5)^{k-2} \E_{x \sim f_R}[|\gamma^T s_R(x+\eps)|^2] + \frac{4}{5} (1.6)^{k-2} k^{k/2} (\gamma^T \I_R \gamma)\\
        &\le 2 k^{k/2}(2.5)^{k-2} \E_{x \sim f_R}[|\gamma^T s_R(x+\eps)|^2] + \frac{4}{5}(1.6)^{k-2} k^{k/2} (\gamma^T \I_R \gamma)\\
        &\le 3 k^{k/2} (2.5)^{k-2} \max(\E_{x \sim f_R}[|\gamma^T s_R(x + \eps)|^2], \gamma^T \I_R \gamma)\\
        &\le k^{k/2} (15)^{k-2} \max\left(\E_{x \sim f_R}[\gamma^T s_R(x+\eps) s_R(x+\eps)^T \gamma],\gamma^T \I_R \gamma \right)
    \end{align*}
    as required.
\end{proof}
\begin{corollary}
\label{cor:shifted_score_subgamma}
Let $s_R$ be the score function of an $R$-smoothed distribution $f_R$ with Fisher information matrix $\I_R$. Let $M \in \R^{d \times d}$ be a symmetric matrix such that $M \succcurlyeq 0$. For $k \ge 3$ and $\eps \in \R^d$ such that $|\eps^T R^{-1} \eps| \le 1/4$, we have that for any $v \in \R^d$ with $\|v\| = 1$,
\begin{align*}
&\E_{x \sim f_R}\left[|v^T M^{1/2} \I_R^{-1} s_R(x+\eps)|^k \right]\\
&\le (15 \| M^{1/2} \I_R^{-1} R^{-1/2} v\|)^{k-2} k^{k/2} v^T \left(M^{1/2} \I_R^{-1} M^{1/2}  \left(1 + O\left(\sqrt{\eps^T R^{-1} \eps} \sqrt{\log \sup_{w \in S^{d-1}} \frac{w^T R^{-1}  w}{w^T  \I_R  w}} \right)\right)\right)v
\end{align*}
In other words,
$$M^{1/2} \I_R^{-1} s_R(x+\eps) \in \Gamma \left(M^{1/2} \I_R^{-1} M^{1/2}  \left(1 + O\left(\sqrt{\eps^T R^{-1} \eps} \sqrt{\log \sup_{w \in S^{d-1}} \frac{w^T R^{-1}  w}{w^T \I_R w}} \right)\right), M^{1/2} \I_R^{-1} R^{-1/2}\right)$$
\end{corollary}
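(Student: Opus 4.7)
My plan is to deduce Corollary~\ref{cor:shifted_score_subgamma} as a near-immediate consequence of Lemma~\ref{lem:shifted_score_subgamma} (applied after a change of variables that turns the expression into a projection of $R^{1/2} s_R(x+\eps)$ onto a \emph{unit} direction) combined with the second-moment bound from Lemma~\ref{lem:shifted_covariance}.

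The first step is a change of basis that converts the projection of interest, $v^{\top} M^{1/2} \I_R^{-1} s_R(x+\eps)$, into the form $u^{\top} R^{1/2} s_R(x+\eps)$ handled by Lemma~\ref{lem:shifted_score_subgamma}. Define $w := R^{-1/2} \I_R^{-1} M^{1/2} v$ and $u := w/\|w\|$. Since $R^{-1/2}$, $\I_R^{-1}$, and $M^{1/2}$ are symmetric, $w^{\top} R^{1/2} = v^{\top} M^{1/2} \I_R^{-1}$, and therefore
\[
v^{\top} M^{1/2} \I_R^{-1} s_R(x+\eps) \;=\; \|w\|\cdot u^{\top} R^{1/2} s_R(x+\eps).
\]
Taking the $k$-th absolute moment and applying Lemma~\ref{lem:shifted_score_subgamma} to the unit vector $u$ (valid because $|\eps^{\top} R^{-1} \eps| \le 1/4$) gives
\[
\E\bigl[|v^{\top} M^{1/2} \I_R^{-1} s_R(x+\eps)|^k\bigr] \;\le\; \|w\|^k \cdot 15^{k-2} k^{k/2}\, \max\!\Bigl(\E\bigl[(u^{\top} R^{1/2} s_R(x+\eps))^2\bigr],\; u^{\top} R^{1/2} \I_R R^{1/2} u\Bigr).
\]

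Next I would absorb two powers of $\|w\|$ into the maximum. A direct computation gives $\|w\|^{2}\,u^{\top} R^{1/2} \I_R R^{1/2} u = w^{\top} R^{1/2} \I_R R^{1/2} w = v^{\top} M^{1/2} \I_R^{-1} M^{1/2} v$, and similarly $\|w\|^{2}\,\E[(u^{\top} R^{1/2} s_R(x+\eps))^2] = \E[(v^{\top} M^{1/2} \I_R^{-1} s_R(x+\eps))^2]$. The remaining $\|w\|^{k-2}$ factor is precisely the scale factor appearing in the corollary's statement (equal to $\|M^{1/2}\I_R^{-1} R^{-1/2} v\|^{k-2}$ when bounded uniformly in $v$ via the operator norm, since $R^{-1/2}\I_R^{-1} M^{1/2}$ is the transpose of $M^{1/2}\I_R^{-1} R^{-1/2}$).

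Finally, to collapse the maximum into a single clean expression matching the corollary, I would apply Lemma~\ref{lem:shifted_covariance} with this $M$ and $\eps$, which yields
\[
\E\bigl[(v^{\top} M^{1/2} \I_R^{-1} s_R(x+\eps))^2\bigr] \;\le\; v^{\top} M^{1/2} \I_R^{-1} M^{1/2} v \cdot \left(1 + O\!\left(\sqrt{\eps^{\top} R^{-1} \eps}\,\sqrt{\log \sup_{w' \in S^{d-1}} \frac{w'^{\top} R^{-1} w'}{w'^{\top} \I_R w'}}\right)\right).
\]
Since the factor in parentheses is at least $1$, this bound dominates the other argument of the max, and substituting gives exactly the claimed inequality. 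I do not foresee a real obstacle: the corollary is bookkeeping that combines the scalar moment bound of Lemma~\ref{lem:shifted_score_subgamma} with the second-moment bound of Lemma~\ref{lem:shifted_covariance}, via the change-of-basis step that rescales a non-unit direction into a unit one. The only care needed is in verifying the matrix identities absorbing $\|w\|^2$ and in noting the aforementioned transpose convention for the scale parameter.
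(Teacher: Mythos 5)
Your proposal is correct and is essentially the paper's own argument: the paper likewise applies Lemma~\ref{lem:shifted_score_subgamma} along the rescaled direction $R^{-1/2}\I_R^{-1}M^{1/2}v$ and then invokes Lemma~\ref{lem:shifted_covariance} to replace the second-moment term, absorbing the max exactly as you do. Your remark about the transpose in the scale factor $\|R^{-1/2}\I_R^{-1}M^{1/2}v\|$ versus $\|M^{1/2}\I_R^{-1}R^{-1/2}v\|$ is if anything more careful than the paper, and is harmless since only the (transpose-invariant) operator and Frobenius norms are used downstream.
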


\begin{proof}
    By the Lemma,
    \begin{align*}
    &\E_{x \sim f_R}\left[|v^T M^{1/2} \I_R^{-1} s_R(x+\eps)|^k \right]\\
    &\le (15 \|M^{1/2} \I_R^{-1} R^{-1/2} v\|)^{k-2} k^{k/2}\\
    &\max\left(\E_{x \sim f_R}\left[v^T M^{1/2} \I_R^{-1} s_R(x+\eps) s_R(x+\eps)^T \I_R^{-1} M^{1/2} v \right], v^T M^{1/2} \I_R^{-1} M^{1/2} v \right)
    \end{align*}
    Then, using Lemma~\ref{lem:shifted_covariance}, the claim follows.
\end{proof}
%
%

%
%


\subsection{Estimation of inverted score}
\label{appendix:high_dim_inverted_score_estimation}
In this section, we use the subgamma bound on $1$-dimensional projections of $s_R(x+\eps)$ for $x \sim f_R$ from Corollary~\ref{cor:shifted_score_subgamma}, as well as our norm concentration bound for subgamma vectors from Theorem~\ref{thm:subgamma_norm_concentration} to establish a bound on the deviation of our inverted empirical score at $x+\eps$ from its expectation.
\begin{lemma}
\label{lem:score_inversion_estimation}
    Let $f$ be an arbitrary distribution on $\R^d$ and let $f_R$ be the $R$-smoothed version of $f$. Let $\I_R$ be the Fisher information matrix of $f_R$. Let $\eps \in \R^d$ be such that $\eps^T R^{-1} \eps \le 1/4$. Consider the parametric family of distributions $f_R^\lam(x) = f_R(x - \lam)$. Suppose we have $n$ i.i.d. samples $x_1, \dots, x_n \sim f_R^\lam$. Let $M\in \R^{d\times d}$ be a symmetric matrix with $M \succcurlyeq 0$. Let $\hat \eps = \frac{1}{n} \sum_{i=1}^n \I_R^{-1} s_R(x_i - \lam - \eps)$. Let
    $$T := M^{1/2} \I_R^{-1} M^{1/2}  \left(1 + O\left(\sqrt{\eps^T R^{-1} \eps} \sqrt{\log \sup_{w \in S^{d-1}} \frac{w^T  R^{-1} w}{w^T  \I_R w}} \right)\right)$$
    
    Then, with probability $1 - \delta$, we have
    \begin{align*}
        &\|\hat \eps - \E_{x \sim f_R}[\I_R^{-1}s_R(x - \eps)]\|_M\\
        &\le \sqrt{\frac{\Tr(T)}{n}} + 4 \sqrt{\frac{\norm{T} \log \frac{2}{\delta}}{n}} + 16 \frac{\norm{M^{1/2} \I_R^{-1} R^{-1/2}} \log \frac{2}{\delta}}{n} + 8 \frac{\norm{M^{1/2}\I_R^{-1} R^{-1/2}}_F^2}{n^{3/2}\sqrt{\Tr(T)}} \log \frac{2}{\delta}\\
    \end{align*}
\end{lemma}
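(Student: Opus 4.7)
The plan is to reduce the statement to a direct application of our subgamma norm concentration inequality (Theorem~\ref{thm:subgamma_norm_concentration}), combined with the directional subgamma bound on the score given by Corollary~\ref{cor:shifted_score_subgamma}.

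First, since $x_i - \lam \sim f_R$, letting $y_i := \I_R^{-1} s_R(x_i - \lam - \eps)$ makes the $y_i$ i.i.d.~copies of $\I_R^{-1} s_R(x - \eps)$ for $x \sim f_R$. Writing $\mu := \E_{x \sim f_R}[\I_R^{-1} s_R(x - \eps)]$ and $z_i := M^{1/2}(y_i - \mu)$, we have
\[
  \|\hat \eps - \mu\|_M \;=\; \|M^{1/2}(\hat \eps - \mu)\|_2 \;=\; \Big\|\tfrac{1}{n} \sum_{i=1}^n z_i\Big\|_2,
\]
so the problem reduces to bounding the norm of an average of $n$ i.i.d.~mean-zero random vectors in $\R^d$.

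The next step is to verify that each $z_i$ is $(T,\, M^{1/2}\I_R^{-1} R^{-1/2})$-subgamma in the sense required by Theorem~\ref{thm:subgamma_norm_concentration}. For any unit $v \in \R^d$, Corollary~\ref{cor:shifted_score_subgamma} asserts exactly that $v^T M^{1/2} \I_R^{-1} s_R(x - \eps)$ is subgamma with variance proxy $v^T T v$ and scale $\|M^{1/2}\I_R^{-1} R^{-1/2} v\|$ (up to the constants absorbed in $T$). Centering by the scalar $v^T M^{1/2}\mu$ leaves the variance proxy and scale unchanged up to absorbed constants. Consequently, by independence and the fact that MGFs of iid sums multiply, the average $\bar z := \tfrac{1}{n}\sum_i z_i$ is $(T/n,\, (M^{1/2}\I_R^{-1} R^{-1/2})/n)$-subgamma.

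Finally, applying Theorem~\ref{thm:subgamma_norm_concentration} to $\bar z$ with $\Sigma = T/n$ and $C = (M^{1/2}\I_R^{-1} R^{-1/2})/n$, and choosing the second branch of the ``$\min$'', gives
\[
  \|\bar z\|_2 \le \sqrt{\tfrac{\Tr(T)}{n}} + 4\sqrt{\tfrac{\|T\|\log\frac{2}{\delta}}{n}} + 16\,\tfrac{\|M^{1/2}\I_R^{-1} R^{-1/2}\| \log\frac{2}{\delta}}{n} + 8\,\tfrac{\|M^{1/2}\I_R^{-1} R^{-1/2}\|_F^2}{n^{3/2} \sqrt{\Tr(T)}} \log\tfrac{2}{\delta},
\]
which is exactly the stated bound after verifying the arithmetic (the $\Sigma/n$ and $C/n$ scaling produce the $1/\sqrt{n}$, $1/n$, and $1/n^{3/2}$ rates on the four terms respectively).

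The main technical point is the centering step: Corollary~\ref{cor:shifted_score_subgamma} gives the subgamma property for the uncentered vector $M^{1/2} \I_R^{-1} s_R(x - \eps)$, but Theorem~\ref{thm:subgamma_norm_concentration} requires a mean-zero input. Since subtracting the deterministic shift $\mu$ only changes the first moment, the MGF inequality $\E[e^{\lambda v^T z_i}] \le e^{\lambda^2 v^T T v / 2}$ for $|\lambda| \le 1/\|M^{1/2}\I_R^{-1} R^{-1/2} v\|$ still holds with the same parameters up to universal constants, which are absorbed into the $1 + O(\sqrt{\eps^T R^{-1}\eps}\sqrt{\log(\cdot)})$ slack already present in the definition of $T$. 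Beyond this bookkeeping, the rest of the proof is mechanical.
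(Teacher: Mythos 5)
Your proposal is correct and follows essentially the same route as the paper's own proof: invoke Corollary~\ref{cor:shifted_score_subgamma} to get the directional $(T, M^{1/2}\I_R^{-1}R^{-1/2})$-subgamma property of the transformed score, reduce $\|\hat\eps-\mu\|_M$ to the Euclidean norm of the centered, $M^{1/2}$-transformed sample average, and apply Theorem~\ref{thm:subgamma_norm_concentration} with the $(T/n, C/n)$ scaling, taking the second branch of the min. Your explicit treatment of the averaging and centering bookkeeping is in fact more detailed than the paper's terse argument, which states these steps implicitly.
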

\begin{proof}
By Corollary~\ref{cor:shifted_score_subgamma}, $M^{1/2}\I_R^{-1} s_R(x)$ is $(T, M^{1/2} \I_R^{-1} R^{-1/2})$-subgamma. Then, applying our subgamma norm concentration bound from Theorem~\ref{thm:subgamma_norm_concentration} gives
\begin{align*}
    &\| \hat \eps - \E_{x \sim f_R}\left[\I_R^{-1} s_R(x-\eps) \right]\|_M\\
    &= \left\|M^{1/2} \left(\frac{1}{n} \sum_{i=1}^n \I_R^{-1} s_R(x_i - \lam - \eps)\right) - M^{1/2} \E_{x \sim f_R}\left[ \I_R^{-1} s_R(x - \eps) \right] \right\|\\
    &= \left\| \left(\frac{1}{n}\sum_{i=1}^n M^{1/2}\I_R^{-1} s_R(x_i - \lam - \eps)\right) -  \E_{x \sim f_R}\left[ M^{1/2}\I_R^{-1} s_R(x - \eps) \right] \right\|\\
    &\le \sqrt{\frac{\Tr(T)}{n}} + 4 \sqrt{\frac{\norm{T} \log \frac{2}{\delta}}{n}} + 16 \frac{\norm{M^{1/2} \I_R^{-1} R^{-1/2}} \log \frac{2}{\delta}}{n} + 8 \frac{\norm{M^{1/2}\I_R^{-1} R^{-1/2}}_F^2}{n^{3/2}\sqrt{\Tr(T)}} \log \frac{2}{\delta}\\
\end{align*}
\end{proof}
\subsection{Local MLE}
\label{appendix:high_dim_local_mle}
In this section, we show how to estimate our location $\lam$ at rate that depends on $\I_R$ when given samples from $f_\lam$, along with an initial uncertainty region $S$ that is guaranteed to contain $\lam$. 
\setcounter{algorithm}{2}
\begin{algorithm}[H]\caption{High-dimensional Local MLE}
\label{alg:localMLE_highdimensions}
\vspace*{3mm}
\paragraph{Input Parameters:}
\begin{itemize}
    \item 
    Description of distribution $f$ on $\R^d$, smoothing $R$, samples $x_1, \ldots, x_n \overset{i.i.d.}{\sim} f^{\lambda}$, and initial estimate $\lam_1$
\end{itemize}
\begin{enumerate}
\item Let $\I_R$ be the Fisher information matrix of $f_R$, the $R$-smoothed version of $f$. Let $s_R$ be the score function of $f_R$.
\item For each sample $x_i$, compute a perturbed sample $x'_i = x_i + \Normal(0,R)$ where all the Gaussian noise are drawn independently across all the samples.
\item Let $\hat \eps = \frac{1}{n} \sum_{i=1}^n \I_R^{-1} s_R(x_i' - \lam_1)$ and return $\lambdahat = \lam_1 - \hat \eps$.
\end{enumerate}
\end{algorithm}

\begin{lemma}[Local MLE]
\label{lem:high_d_new_local_mle}
    Suppose we have a known model $f$ on $\R^d$, and that $f_R$ is the $R$-smoothed version of $f$, for $R = r^2 I_d$ for scalar $r > 0$. Suppose $f_R$ has Fisher information matrix $\I_R$. 
    Further, suppose that the unknown true parameter is $\lam$, and that we have access to an initial estimate $\lam_1 = \lam + \eps$ with the guarantee that $\eps^T R^{-1} \eps \le \tau$ for $\tau \le 1/4$. Suppose there exists a large parameter $\gamma \ge 1$ such that $\tau \le \frac{1}{\gamma^2 \log^2 \frac{\|\I_R^{-1}\|}{r^2}}$. 
    Further, suppose $r^2 \ge 4\gamma^2 \|\I_R^{-1}\| \frac{\log \frac{2}{\delta}}{n}$
    Then, with probability $1 - \delta$ over $n$ samples from $f^\lam$, the output of Algorithm~\ref{alg:localMLE_highdimensions} satisfies
    \begin{align*}
    \|\hat \lam - \lam\|_M &\le \left(1 + O\left(\frac{1}{\gamma}\right)\right) \left(\sqrt{\frac{\Tr(M^{1/2}\I_R^{-1}M^{1/2})}{n}} + 4\sqrt{\frac{\|M^{1/2}\I_R^{-1}M^{1/2}\| \log \frac{2}{\delta}}{n} }\right)\\
    &\quad + O\left(\tau \sqrt{\|M^{1/2} \I_R^{-1} M^{1/2}\|} \right)
    \end{align*}
\end{lemma}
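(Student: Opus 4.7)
The plan is to decompose the error as bias plus concentration and apply the two key earlier lemmas. Writing $\lam_1 = \lam + \eps$, the algorithm produces $\hat\lam = \lam_1 - \hat\eps$, so $\hat\lam - \lam = \eps - \hat\eps$. Because each perturbed sample satisfies $x_i' = y_i + \lam$ with $y_i \overset{i.i.d.}{\sim} f_R$, we have $s_R(x_i' - \lam_1) = s_R(y_i - \eps)$, and hence $\E[\hat\eps] = \E_{y \sim f_R}[\I_R^{-1} s_R(y-\eps)]$. Triangle inequality under $\|\cdot\|_M$ then gives
\[
\|\hat\lam - \lam\|_M \le \underbrace{\|\eps - \E_{y \sim f_R}[\I_R^{-1} s_R(y-\eps)]\|_M}_{\text{bias}} + \underbrace{\|\hat\eps - \E_{y \sim f_R}[\I_R^{-1} s_R(y-\eps)]\|_M}_{\text{concentration}}.
\]

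For the bias term, I would apply Lemma~\ref{lem:expected_shifted_score} with its ``$\eps$'' replaced by $-\eps$ (valid since $(-\eps)^T R^{-1}(-\eps) = \eps^T R^{-1}\eps \le \tau \le 1/4$). This yields $\|\eps - \E_{y\sim f_R}[\I_R^{-1} s_R(y-\eps)]\|_M \lesssim \sqrt{\|M^{1/2}\I_R^{-1}M^{1/2}\|}\cdot(\eps^T R^{-1}\eps) \le O(\tau\sqrt{\|M^{1/2}\I_R^{-1}M^{1/2}\|})$, exactly the third summand in the lemma statement. For the concentration term, I would invoke Lemma~\ref{lem:score_inversion_estimation} on the $n$ perturbed samples to get the four-term bound involving $T = M^{1/2}\I_R^{-1}M^{1/2}(1 + O(\sqrt{\tau\log \sup_w (w^T R^{-1} w)/(w^T \I_R w)}))$ and $\|M^{1/2}\I_R^{-1}R^{-1/2}\|$ (operator and Frobenius).

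Next I would absorb the $(1 + O(\cdot))$ factor in $T$ into $(1 + O(1/\gamma))$. Since $R = r^2 I_d$, we have $\sup_{w \in S^{d-1}} (w^T R^{-1} w)/(w^T \I_R w) = \|\I_R^{-1}\|/r^2$, and the hypothesis $\tau \le 1/(\gamma^2 \log^2(\|\I_R^{-1}\|/r^2))$ gives $\sqrt{\tau\log(\|\I_R^{-1}\|/r^2)} \le 1/\gamma$. Consequently $\sqrt{\Tr(T)/n}$ and $\sqrt{\|T\|\log\frac{2}{\delta}/n}$ are each at most $(1+O(1/\gamma))$ times the corresponding quantities with $T$ replaced by $M^{1/2}\I_R^{-1}M^{1/2}$.

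The main obstacle is showing that the two tail terms involving $R^{-1/2} = I/r$ are dominated by the Gaussian terms; this is where the lower bound on $r^2$ is used. Using $\|M^{1/2}\I_R^{-1}\|^2 = \|M^{1/2}\I_R^{-2}M^{1/2}\| \le \|\I_R^{-1}\| \cdot \|M^{1/2}\I_R^{-1}M^{1/2}\|$, we bound $\|M^{1/2}\I_R^{-1}R^{-1/2}\| \le \sqrt{\|\I_R^{-1}\|\|M^{1/2}\I_R^{-1}M^{1/2}\|}/r$, so the operator-norm term becomes $\lesssim \sqrt{\|\I_R^{-1}\|/r^2}\cdot\sqrt{\|M^{1/2}\I_R^{-1}M^{1/2}\|}\log\frac{2}{\delta}/n$, which the hypothesis $r^2 \ge 4\gamma^2\|\I_R^{-1}\|\log\frac{2}{\delta}/n$ controls by $\frac{1}{\gamma}\sqrt{\|M^{1/2}\I_R^{-1}M^{1/2}\|\log\frac{2}{\delta}/n}$. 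Analogously, $\|M^{1/2}\I_R^{-1}R^{-1/2}\|_F^2 = \Tr(M^{1/2}\I_R^{-2}M^{1/2})/r^2 \le \|\I_R^{-1}\|\Tr(M^{1/2}\I_R^{-1}M^{1/2})/r^2$, so after dividing by $\sqrt{\Tr(T)} \gtrsim \sqrt{\Tr(M^{1/2}\I_R^{-1}M^{1/2})}$, the Frobenius term reduces to $O(\|\I_R^{-1}\|\log\frac{2}{\delta}/(r^2 n))\cdot \sqrt{\Tr(M^{1/2}\I_R^{-1}M^{1/2})/n}$, again $O(1/\gamma)$ times the Gaussian trace term by the same $r^2$ hypothesis. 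Combining the bias bound with the $(1+O(1/\gamma))$-scaled concentration gives the stated inequality.
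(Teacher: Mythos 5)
Your proposal is correct and follows essentially the same route as the paper's proof: the same bias-plus-concentration decomposition of $\hat\lambda-\lambda=\eps-\hat\eps$, Lemma~\ref{lem:expected_shifted_score} for the bias term, Lemma~\ref{lem:score_inversion_estimation} for the concentration term, the hypothesis on $\tau$ to absorb the $(1+O(\cdot))$ factor in $T$, and the lower bound on $r^2$ together with $\|M^{1/2}\I_R^{-1}\|^2\le\|\I_R^{-1}\|\,\|M^{1/2}\I_R^{-1}M^{1/2}\|$ and its trace analogue to show the two $R^{-1/2}$ tail terms are $O(1/\gamma)$ times the Gaussian terms. No gaps beyond the same mild looseness already present in the paper (e.g.\ treating $\sqrt{\Tr(T)}\gtrsim\sqrt{\Tr(M^{1/2}\I_R^{-1}M^{1/2})}$).
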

\begin{proof}
    By the guarantee on $\lam_1 = \lam + \eps$, we have that $\eps^T R^{-1} \eps \le \tau$. Let $T$ be as defined in Lemma~\ref{lem:score_inversion_estimation}. Now
    \begin{align*}
    \sup_{w \in S^{d-1}} \frac{w^T R^{-1} w}{w^T \I_R w} = \frac{\|\I_R^{-1}\|}{r^2} 
    \end{align*}
    so that since $\tau \le \frac{1}{\gamma^2 \log^2 \frac{\|\I_R^{-1}\|}{r^2}}$,
    \[
        \sqrt{\tau} \log\left(\sup_{w \sim S^{d-1}}\frac{w^T R^{-1} w}{w^T \I_R w} \right) \le \frac{1}{\gamma}
    \]
    So, we have
    $$\Tr(T) \le \Tr(M^{1/2} \I_R^{-1} M^{1/2})\left(1 + \frac{1}{\gamma} \right)$$
    and
    $$\|T\| \le \norm{M^{1/2} \I_R^{-1} M^{1/2} }\left(1 + \frac{1}{\gamma} \right)$$
    So, by Lemma~\ref{lem:score_inversion_estimation}
    \begin{align*}
        &\|\hat \eps - \E_{x \sim f_R}[\I_R^{-1}s_R(x - \eps)]\|_M\\
        &\le \left(1 + O\left(\frac{1}{\gamma} \right) \right)\left(\sqrt{\frac{\Tr( M^{1/2}\I_R^{-1}M^{1/2})}{n}} + 4 \sqrt{\frac{\norm{M^{1/2} \I_R^{-1} M^{1/2} } \log \frac{2}{\delta}}{n}}+  \frac{8\|M^{1/2}\I_R^{-1}  \|_F^2}{r^2 n^{3/2} \sqrt{\Tr(M^{1/2} \I_R^{-1} M^{1/2})}} \log \frac{2}{\delta}\right)\\
        &+ 16 \frac{\norm{M^{1/2} \I_R^{-1} } \log \frac{2}{\delta}}{rn} \\
    \end{align*}
    Since $r^2 \ge 4 \gamma^2 \|\I_R^{-1}\| \frac{\log \frac{2}{\delta}}{n}$, $\frac{1}{r} \le \left(\frac{n}{\log \frac{2}{\delta}} \right)^{1/2} \frac{1}{2 \gamma\sqrt{\|\I_R^{-1}\| }} $. So, 
    \begin{align*}
        16\frac{ \|M^{1/2} \I_R^{-1}\| \log \frac{2}{\delta}}{rn} &\le \frac{8 \|M^{1/2} \I_R^{-1}\|}{\gamma\sqrt{\|\I_R^{-1}\|}} \left(\frac{\log \frac{2}{\delta}}{n} \right)^{1/2}\\
        &\le \frac{8}{\gamma}\sqrt{\frac{\|M^{1/2} \I_R^{-1} M^{1/2}\| \log \frac{2}{\delta}}{n}} \\
        &\quad \text{since } \frac{(\|M^{1/2} \I_R^{-1}\|)^2}{\|M^{1/2} \I_R^{-1} M^{1/2}\|} = \left(\frac{\norm{M^{1/2} \I_R^{-1}}}{\norm{M^{1/2} \I_R^{-1/2}}} \right)^2 \le \|\I_R^{-1}\|\\
    \end{align*}
    Similarly, $\frac{1}{r^2} \le \frac{n}{4\gamma^2\| \I_R^{-1}\| \log \frac{2}{\delta}} $. So,
    \begin{align*}
        \frac{8 \|M^{1/2} \I_R^{-1}\|_F^2}{r^2 n^{3/2} \sqrt{\Tr(M^{1/2} \I_R^{-1}M^{1/2})}} \log \frac{2}{\delta} &\le \frac{8\Tr(M \I_R^{-2})}{r^2 n^{3/2}\sqrt{\Tr(M \I_R^{-1})}} \log \frac{2}{\delta}\\
        &\le \frac{2 \Tr(M \I_R^{-2})}{\gamma \|\I_R^{-1}\|n \sqrt{ \Tr(M \I_R^{-1})}} \sqrt{\log \frac{2}{\delta}}\\
        &\le \frac{8}{\gamma} \sqrt{\frac{\Tr(M^{1/2} \I_R^{-1} M^{1/2})}{n}} \quad \text{using Lemma~\ref{lem:trace_AB_bound}}
    \end{align*}
    So, we have
    \begin{align*}
        &\left\|\hat \eps - \E_{x \sim f_R}\left[ \I_R^{-1} s_R(x-\eps)\right]\right\|_M \le \left(1 + O\left(\frac{1}{\gamma} \right)\right) \left(\sqrt{\frac{\Tr(M^{1/2} \I_R^{-1} M^{1/2})}{n}} + \sqrt{\frac{\|M^{1/2} \I_R^{-1} M^{1/2} \| \log \frac{2}{\delta}}{n} }\right)
    \end{align*}
    Now, using Lemma~\ref{lem:expected_shifted_score}
    \begin{align*}
        \left\|\eps - \E_{x \sim f_R} \left[\I_R^{-1} s_R(x-\eps) \right] \right\|_M \lesssim \sqrt{\|M^{1/2} \I_R^{-1} M^{1/2} \|} (\eps^T R^{-1} \eps) \le \tau \sqrt{\|M^{1/2} \I_R^{-1} M^{1/2}\|} 
    \end{align*}
    So, we have
    \begin{align*}
        \|\hat \eps - \eps\|_M &\le \|\hat \eps - \E_{x \sim f_R}\left[\I_R^{-1} s_R(x-\eps) \right]\|_M +\|\eps - \E_{x \sim f_R}\left[\I_R^{-1} s_R(x-\eps) \right]\|_M\\
        &\le \left(1 + O\left(\frac{1}{\gamma}\right)\right) \left(\sqrt{\frac{\Tr(M^{1/2} \I_R^{-1} M^{1/2})}{n}} + 4\sqrt{\frac{\|M^{1/2} \I_R^{-1}M^{1/2}\| \log \frac{2}{\delta}}{n} }\right)\\
        &\quad + O\left(\tau \sqrt{\|M^{1/2} \I_R^{-1} M^{1/2}\|}  \right)
    \end{align*}
    Now, since $\hat \lam = \lam_1 - \hat \eps$ and $\lam = \lam_1 - \eps$, $\hat \lam - \lam = \hat\eps - \eps$. The claim follows.
\end{proof}
\subsection{Global MLE}
\label{appendix:high_dim_global_mle}
In this section, we state and prove our main theorem, which shows how to estimate the location $\lam$ on rate that depends on $\I_R$, given $n$ samples from $f^\lam$.

We begin by stating a result from the heavy-tailed estimation literature, which we will make use of to generate an initial estimate $\lam + \eps$. We will then apply the result from the previous section to refine this estimate in order to recover our final estimate.
\begin{theorem}[\cite{Hopkins2018SubGaussianME, pmlr-v99-cherapanamjeri19b}]
\label{thm:hopkins_estimator}
There are universal constants $C_0, C_1, C_2$ such that for every $n, d \in \mathbb N$ and $\delta > 2^{-n/C_2}$, there is an algorithm which
runs in time $O(nd) + (d \log(1/\delta))^{C_0}$ such that for every random variable $X$ on $\R^d$, given i.i.d. copies $X_1, \dots, X_n$ of $X$, outputs a vector $\hat \mu_\delta(X_1, \dots, X_n)$ such that
$$\Pr\left[\|\mu - \hat \mu_\delta\| > C_1 \left(\sqrt{\frac{\Tr(\Sigma)}{n}} + \sqrt{\frac{\|\Sigma\| \log(1/\delta)}{n}} \right) \right] \le \delta$$
where $\E [X] = \mu$ and $\E\left[(X - \mu)(X - \mu)^T\right] = \Sigma$
\end{theorem}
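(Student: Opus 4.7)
The plan is to prove Theorem~\ref{thm:hopkins_estimator} by a median-of-means tournament in the style of Lugosi and Mendelson, combined with an efficient algorithmic realization following Hopkins (sum-of-squares) or Cherapanamjeri--Flammarion--Bartlett (iterative spectral updates). The information-theoretic content is relatively standard; the bulk of the work lies in the polynomial-time implementation.

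First, I would partition the $n$ samples into $k = \Theta(\log(1/\delta))$ disjoint buckets each of size $m = n/k$, and form the bucket means $\bar X_1, \ldots, \bar X_k$. For any fixed direction $v \in \R^d$, Chebyshev's inequality yields $|v^T(\bar X_i - \mu)| \le 10 \sqrt{v^T \Sigma v/m}$ with probability at least $99/100$ per bucket. Thus, by a Chernoff bound over buckets, the median of the $v^T \bar X_i$ is within $O(\sqrt{v^T \Sigma v / m})$ of $v^T \mu$ with probability $1 - \exp(-\Omega(k)) \ge 1-\delta$.

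Second, I would upgrade this to a uniform statement over all directions via a covering or VC-type argument, showing that with probability $1-\delta$ the true mean $\mu$ satisfies the Lugosi--Mendelson condition: for every $v \in S^{d-1}$ simultaneously, at least $0.9k$ of the buckets satisfy $|v^T(\bar X_i - \mu)| \le r$, where $r = C_1\bigl(\sqrt{\Tr(\Sigma)/n} + \sqrt{\|\Sigma\|\log(1/\delta)/n}\bigr)$. Any point $\hat\mu$ satisfying the same condition must then lie within $O(r)$ of $\mu$ by a simple triangle argument: for every direction $v$, at least $0.8k$ buckets simultaneously witness closeness to both $\mu$ and $\hat\mu$, so $|v^T(\hat\mu - \mu)| \le 2r$; taking the supremum over $v$ yields $\|\hat\mu - \mu\| \le 2r$.

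The \textbf{main obstacle}, and the reason this theorem is nontrivial, is producing such a $\hat\mu$ in polynomial time, since the Lugosi--Mendelson condition is a supremum over $S^{d-1}$ of a count of indicator events. To achieve the stated runtime $O(nd) + (d\log(1/\delta))^{C_0}$, I would follow the Cherapanamjeri--Flammarion--Bartlett iteration: starting from a cheap warm start (e.g., coordinate-wise median-of-means), at each step compute the top eigenvector of an appropriately reweighted bucket-deviation matrix to identify a direction of near-maximal violation, then update the current candidate by a small step along that direction. A potential-function argument shows convergence in $O(\log d)$ iterations to a point satisfying the Lugosi--Mendelson condition, after which the analysis above gives the required error bound. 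Each iteration reduces to a spectral computation of size $\poly(d \log(1/\delta))$ plus one $O(nd)$ pass to recompute bucket projections, giving the claimed total runtime.
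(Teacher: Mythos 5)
The paper does not actually prove this statement: Theorem~\ref{thm:hopkins_estimator} is quoted from Hopkins (2018) and Cherapanamjeri--Flammarion--Bartlett (2019) and used purely as a black box to produce the warm start $\lam_1$ in Algorithm~\ref{alg:globalMLE_highdim}. So there is no in-paper proof to compare against; your proposal is in effect a reconstruction of the cited works' argument, and its overall architecture---median-of-means buckets, the Lugosi--Mendelson uniform condition, an efficient descent/spectral routine that finds a point satisfying it, and the triangle argument showing any such point is $O(r)$-close to $\mu$---is the right one.

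Two steps as you state them are where essentially all of the difficulty lives, and one of them would fail in the form you describe. The upgrade to uniformity over all $v \in S^{d-1}$ cannot be done by a generic ``covering or VC-type argument'': with only $k = \Theta(\log(1/\delta))$ buckets, a union bound over an $\exp(\Theta(d))$-size net of the sphere is unaffordable, and a crude VC bound over halfspaces would add an error term scaling like $\sqrt{d/k}$ in the fraction of violated buckets, destroying the stated rate when $d \gg \log(1/\delta)$. The actual Lugosi--Mendelson argument controls $\sup_{\|v\|=1}$ of the count of bad buckets by a bounded-differences concentration step around its expectation, and bounds that expectation via truncation plus Rademacher contraction, reducing it to $\E \sup_{\|v\|=1} \frac{1}{k}\sum_i \epsilon_i v^\top(\bar X_i - \mu) \lesssim \sqrt{\Tr(\Sigma)/n}$; this computation is precisely where the dimension-free $\sqrt{\Tr(\Sigma)/n}$ term comes from, and it is not interchangeable with a net argument. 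Likewise, for the algorithmic half, ``a potential-function argument shows convergence'' hides the main content of Cherapanamjeri et al.: one must show that a tractable relaxation of the bucket-count objective (a top-eigenvalue or SDP surrogate) certifies a violated direction whenever the current iterate is far from $\mu$, losing only constants, so that each spectral step makes geometric progress. As a roadmap your plan matches the cited proofs, but those two steps need the specific machinery of those papers rather than the generic substitutes you invoke.
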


\setcounter{algorithm}{3}
\begin{algorithm}[H]
\caption{High-dimensional Global MLE}
\label{alg:globalMLE_highdim}
\vspace*{3mm}
\paragraph{Input Parameters:} 
\begin{itemize}
\item
Failure probability $\delta$, description of distribution $f$, $n$ samples from $f^\lambda$, Smoothing $R$, Approximation parameter $\gamma$
\end{itemize}
\begin{enumerate}
\item Let $\Sigma$ be the covariance matrix of $f$. Compute an initial estimate $\lam_1$ using the first $1/\gamma$ fraction of of the $n$ samples, using an estimator from Theorem~\ref{thm:hopkins_estimator}. 


\item Run Algorithm~\ref{alg:localMLE_highdimensions} using the remaining $1 - 1/\gamma$ fraction of samples using $R$-smoothing and our initial estimate $\lam_1$, returning the final estimate $\lambdahat$.
\end{enumerate}
\end{algorithm}

\begin{theorem}[Global MLE]
\label{thm:globalmle_highdim}
    Let $f$ be a given model on $\R^d$, and suppose we are given $n$ samples from $f^\lam$ for unknown $\lam$. Let $R = r^2 I_d$ for $0 < r^2 < \norm{\Sigma}$ so that $\I_R$ is the $R$-smoothed Fisher information matrix of $f$, and let $\Sigma$ be the covariance of $f$. Let $M\in \R^{d \times d}$ be any symmetric matrix with $M \succcurlyeq 0$ and let $d_R := d_\eff(M^{1/2} \I_R^{-1} M^{1/2})$. Fix failure probability $\delta > 0$ and let $2 \le \gamma \le \left(\frac{n}{d_R + \log \frac{1}{\delta}} \right)^{1/8-\alpha}$ for some $\alpha > 0$. Let $n \ge C\gamma^4 (\frac{\norm{\Sigma}}{r^2})^2\left( \log \frac{4}{\delta} + d_R + \left(\frac{d_\eff(\Sigma)^2}{d_R} \right)\right)$ for large enough constant $C>0$. Then, with probability $1-\delta$, the output $\hat \lam$ of Algorithm~\ref{alg:globalMLE_highdim} satisfies
    \begin{align*}
        \|\hat \lam - \lam\|_M \le \left(1 + O\left(\frac{1}{\gamma} \right) \right) \left(\sqrt{\frac{\Tr(M^{1/2}\I_R^{-1}M^{1/2})}{n}} + 4\sqrt{\frac{\|M^{1/2}\I_R^{-1}M^{1/2}\| \log \frac{4}{\delta}}{n} } \right)
    \end{align*}
\end{theorem}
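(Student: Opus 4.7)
The plan is a direct composition of the heavy-tailed mean estimator of Theorem~\ref{thm:hopkins_estimator} with the local refinement of Lemma~\ref{lem:high_d_new_local_mle}, taking a union bound with failure probability $\delta/2$ at each stage. The bulk of the work is verifying that the error of the Stage~1 initial estimate satisfies the preconditions of Stage~2, and that the residual Taylor error is absorbed into the $(1+O(1/\gamma))$ slack.

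In Stage~1, the algorithm feeds the first $n/\gamma$ samples into the estimator of Theorem~\ref{thm:hopkins_estimator}, yielding with probability $1-\delta/2$ an initial estimate $\lambda_1 = \lambda + \eps$ with
\[
  \|\eps\|^2 \lesssim \frac{\gamma\|\Sigma\|(d_{\eff}(\Sigma) + \log(4/\delta))}{n}.
\]
Because $R = r^2 I_d$, the quantity the local MLE cares about is exactly $\tau := \eps^T R^{-1}\eps = \|\eps\|^2/r^2$. In Stage~2, I verify the three preconditions of Lemma~\ref{lem:high_d_new_local_mle}. Using the smoothed Cramer-Rao bound $\|\I_R^{-1}\| \le \|\Sigma\| + r^2 \le 2\|\Sigma\|$ (valid under $r^2 \le \|\Sigma\|$), the factor $\log(\|\I_R^{-1}\|/r^2)$ is $O(\log(\|\Sigma\|/r^2))$ and can be absorbed into the constant $C$ of the theorem's hypothesis. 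The preconditions $\tau \le 1/4$, $\tau \le 1/(\gamma^2\log^2(\|\I_R^{-1}\|/r^2))$, and $r^2 \ge 4\gamma^2\|\I_R^{-1}\|\log(2/\delta)/n$ then reduce to $n \gtrsim \gamma^3 (\|\Sigma\|/r^2)(d_{\eff}(\Sigma) + \log(4/\delta))$, which is implied by the theorem's hypothesis after applying the AM-GM bound $2d_{\eff}(\Sigma) \le d_R + d_{\eff}(\Sigma)^2/d_R$.

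Applying Lemma~\ref{lem:high_d_new_local_mle} to the remaining $(1-1/\gamma)n$ samples then gives $\|\hat\lambda - \lambda\|_M$ bounded by $(1 + O(1/\gamma))$ times the two leading terms of the theorem, plus a residual $O(\tau\sqrt{\|M^{1/2}\I_R^{-1}M^{1/2}\|})$ coming from the Taylor remainder of Lemma~\ref{lem:expected_shifted_score}. The sample-count reduction from $n$ to $(1-1/\gamma)n$ in the leading terms is itself absorbed by $(1+O(1/\gamma))$.

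The main obstacle is absorbing this Taylor residual into the $(1/\gamma)$ slack in the main bound. Comparing with the second leading term, which scales like $\sqrt{\|M^{1/2}\I_R^{-1}M^{1/2}\|(d_R + \log(4/\delta))/n}$, the absorption requires $\tau^2 \lesssim (d_R + \log(4/\delta))/(\gamma^2 n)$. Substituting the Hopkins bound on $\tau$ and rearranging yields exactly $n \gtrsim \gamma^4(\|\Sigma\|/r^2)^2(d_{\eff}(\Sigma)^2/d_R + \log(4/\delta))$, which explains the otherwise-mysterious $d_{\eff}(\Sigma)^2/d_R$ term in the sample-complexity hypothesis; the standalone $d_R$ term provides slack for absorbing the $\log(\|\Sigma\|/r^2)$ factors in the precondition check. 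The upper bound $\gamma \le (n/(d_R + \log(1/\delta)))^{1/8-\alpha}$ ensures the intermediate $\gamma^3$-order requirements are dominated by the $\gamma^4$-order hypothesis, so all conditions can be verified simultaneously.
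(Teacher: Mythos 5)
Your proposal is correct and follows essentially the same route as the paper's proof: a $\delta/2$--$\delta/2$ union bound over the Hopkins-type initial estimate on the first $1/\gamma$ fraction of samples and the local MLE (Lemma~\ref{lem:high_d_new_local_mle}) on the rest, using $\|\I_R^{-1}\|\le\|\Sigma\|+r^2\le 2\|\Sigma\|$ to check the preconditions, and absorbing the Taylor residual $O(\tau\sqrt{\|M^{1/2}\I_R^{-1}M^{1/2}\|})$ into the $1+O(1/\gamma)$ slack via $\tau\lesssim\frac{1}{\gamma}\sqrt{(d_R+\log\frac{4}{\delta})/n}$, which is exactly how the paper explains the $d_{\eff}(\Sigma)^2/d_R$ term. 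The one loose phrase is that $\log^2(\|\Sigma\|/r^2)$ cannot literally be ``absorbed into the constant $C$'' since it is not constant; it is instead dominated because the hypothesis carries a quadratic $(\|\Sigma\|/r^2)^2$ factor while the precondition needs only $(\|\Sigma\|/r^2)\log^2(\|\Sigma\|/r^2)$ (the paper handles the same point via $\tau<1/\gamma^5$ using the upper bound on $\gamma$), so the argument still closes.
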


\begin{proof}

    By the guarantee from Theorem~\ref{thm:hopkins_estimator}, our initial estimate $\lam_1 = \lam + \eps$ from Step 1 has the property that with probability $1 - \delta/2$,
    \[
    \|\eps\|^2 \lesssim \frac{\Tr(\Sigma)}{n/\gamma} + \frac{\|\Sigma\| \log \frac{2}{\delta}}{n/\gamma}
    \]
    We condition on the success of Step 1. Let $n' = n(1-1/\gamma) \ge n/2$ be the number of samples used in Step 2 to call the Local MLE Algorithm~\ref{alg:localMLE_highdimensions}. By our lower bound on $n$,
    \[
        r^2 \ge \sqrt{C} \gamma^2 \frac{\|\Sigma\|}{\sqrt{n}} \left( \frac{d_\eff(\Sigma)}{\sqrt{d_R}} + d_R+ \sqrt{\log \frac{4}{\delta}} \right) \ge \sqrt{C}\gamma^2 \frac{\Tr(\Sigma) + \|\Sigma\| \log \frac{4}{\delta}}{\sqrt{2}n'}\cdot  \frac{(n')^{1/2}}{\sqrt{d_R + \log \frac{4}{\delta}}}
    \]
    So, for large enough $C$, since $\gamma > 1$, setting 
    \[
        \tau = \frac{1}{\gamma} \sqrt{\frac{d_R + \log \frac{4}{\delta}}{n'}} 
    \]
    yields that 
    \begin{align*}
    \eps^T R^{-1} \eps = \frac{\|\eps\|^2}{r^2} \le \tau
    \end{align*}
    Also $\tau \le 1/4$ since $n' \ge \frac{C}{2}( \log \frac{4}{\delta} + d_R)$.
    So, the condition on the confidence set $S$ in Lemma~\ref{lem:high_d_new_local_mle} is satisfied. 
    
    By the constraint on $n$, we have
    \[
    r^2 \geq \sqrt{C} \gamma^2 \norm{\Sigma} \sqrt{\frac{\log \frac{4}{\delta} + d_R + \left(\frac{d_\eff(\Sigma)^2}{d_R} \right)}{n}} \geq \sqrt{\frac{C}{2}} \gamma^3 \norm{\Sigma} \tau
    \]
    We also have by Lemma~\ref{lem:fisher_lower_bound} that $\norm{\I_R^{-1}}/{\norm{\Sigma}} \leq \frac{\norm{\Sigma + R}}{\norm{\Sigma}} \leq 2$, so
    \[
    \log \frac{\norm{\I_R^{-1}}}{r^2} \leq \log \frac{2\sqrt{2}}{\sqrt{C} \gamma^3 \tau}
    \]
    so the $\tau$ constraint is that
    \[
    \gamma^2 \tau \log^2 \frac{2\sqrt{2}}{\sqrt{C} \gamma^3 \tau} \leq 1
    \]
    the LHS is at most
    \[
    O(\tau^{0.99} \gamma^{2.01}) < 1
    \]
    since $\tau < 1/\gamma^5$, with a constant that is arbitrarily small with $C$. So the constraint on $\tau$ of Lemma~\ref{lem:high_d_new_local_mle} is satisfied.

    
    Using the fact that $n' \ge \frac{C}{2}\left(\log \frac{4}{\delta}\right)$,
    \begin{align*}
        \frac{r^2 n'}{\|\I_R^{-1}\| \log\frac{4}{\delta}} &\ge \frac{C}{2}\gamma^2 \frac{\|\Sigma\| \sqrt{n}}{\|\I_R^{-1}\| \log \frac{4}{\delta}} \left(\frac{d_\eff(\Sigma)}{\sqrt{d_R}} + \sqrt{\log \frac{4}{\delta}} \right)\\
        &\ge \frac{C}{2}\gamma^2 \frac{\|\Sigma\| \sqrt{n}}{\|\Sigma + R\| \log \frac{4}{\delta}} \sqrt{\log \frac{4}{\delta}} \quad \text{by Lemma~\ref{lem:fisher_lower_bound}}\\
        &\ge \gamma^2 \quad \text{since $R = r^2 I_d$ so that $\|R\| = r^2 < \|\Sigma\|$}
    \end{align*}
    So the conditions of Lemma~\ref{lem:high_d_new_local_mle} are satisfied, and with probability $1 - \delta/2$,
    \begin{align*}
            \|\hat \lam - \lam\|_M &\le \left(1 + O\left(\frac{1}{\gamma}\right)\right) \left(\sqrt{\frac{\Tr(M^{1/2}\I_R^{-1}M^{1/2})}{n'}} + 4\sqrt{\frac{\|M^{1/2}\I_R^{-1}M^{1/2}\| \log \frac{4}{\delta}}{n'} }\right)\\
        &\quad + O\left(\tau \sqrt{\|M^{1/2} \I_R^{-1} M^{1/2}\|} \right)\\
        &\le \left(1+O\left(\frac{1}{\gamma} \right) \right)\left(\sqrt{\frac{\Tr(M^{1/2}\I_R^{-1}M^{1/2})}{n}} + 4\sqrt{\frac{\|M^{1/2}\I_R^{-1}M^{1/2}\| \log \frac{4}{\delta}}{n} }\right)
    \end{align*}
    since $n' = n(1 - 1/\gamma)$ and $\tau = \frac{1}{\gamma} \sqrt{\frac{d_R + \log \frac{4}{\delta}}{n'}}$. So, our total failure probability is $\delta$. The claim follows.
\end{proof}
\begin{theorem}[Global MLE, Informal]
Let $f$ have covariance matrix $\Sigma$.  For any $r^2 \leq \norm{\Sigma}$, let $R = r^2 I_d$ and $\I_R$ be the $R$-smoothed Fisher information of the distribution.
For any constant $0 < \eps < 1$, 
\[
\norm{\wh{\lambda}-\lambda}_2 \leq (1 + \eps) \sqrt{\frac{\Tr(\I_R^{-1})}{n}} + 5 \sqrt{\frac{\norm{\I_R^{-1}} \log \frac{4}{\delta}}{n}}
\]
with probability $1-\delta$, for $n > O_{\eps}\left( \left(\frac{\norm{\Sigma}}{r^2}\right)^2\left(\log \frac{2}{\delta} + d_{\eff}(\I_R^{-1}) + \frac{d_{\eff}(\Sigma)^2}{d_{\eff}(\I_R^{-1})}\right)\right)$.
\end{theorem}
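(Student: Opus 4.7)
The plan is to derive this statement as a direct corollary of the formal Global MLE result (Theorem~\ref{thm:globalmle_highdim}) by specializing the matrix $M$ to the identity. Setting $M = I_d$, the $M$-norm coincides with the $\ell_2$ norm, so $\|\wh\lambda - \lambda\|_M = \|\wh\lambda - \lambda\|_2$. Under this choice, the matrix $T = M^{1/2} \I_R^{-1} M^{1/2}$ from the formal theorem simplifies to $T = \I_R^{-1}$, so $\Tr(T) = \Tr(\I_R^{-1})$, $\norm{T} = \norm{\I_R^{-1}}$, and $d_{\eff}(T) = d_{\eff}(\I_R^{-1})$. Each occurrence in the formal bound and sample complexity therefore matches the informal statement term-for-term.

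Next, to convert the $(1 + O(1/\gamma))$ prefactors of the formal theorem into the clean $(1+\eta)$ leading constant and the absolute constant $5$ on the second term, I would choose $\gamma$ to be a sufficiently large constant depending only on $\eta$ (independent of $n, d, \delta$). Concretely, picking $\gamma = \Theta(1/\eta)$ large enough makes the first prefactor at most $1 + \eta$, and also makes $4(1 + O(1/\gamma)) \leq 5$ for the second term, so that the resulting bound is exactly
\[
\|\wh\lambda - \lambda\|_2 \;\leq\; (1+\eta)\sqrt{\tfrac{\Tr(\I_R^{-1})}{n}} + 5\sqrt{\tfrac{\norm{\I_R^{-1}} \log\tfrac{4}{\delta}}{n}}.
\]
Since $\gamma$ is a constant determined by $\eta$, the $\gamma^4$ factor in the sample complexity requirement of Theorem~\ref{thm:globalmle_highdim} gets absorbed into the $O_\eta(\cdot)$ in the informal statement, yielding the claimed lower bound on $n$.

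The only verification step is confirming that the parameter $\gamma$ chosen above lies in the admissible range of Theorem~\ref{thm:globalmle_highdim}, which required $2 \leq \gamma \leq (n/(d_R + \log(1/\delta)))^{1/8 - \alpha}$ for some $\alpha > 0$. Since our $\gamma = \Theta(1/\eta)$ is a constant and the sample complexity bound forces $n \gg d_R + \log(1/\delta)$, the upper-range constraint is satisfied automatically once $n$ is large enough. I do not anticipate any real obstacle: the entire content of the informal theorem is bookkeeping of how the general $M$-norm bound specializes to the identity, together with absorbing constants into $O_\eta$. The substantive work — the subgamma score analysis, the norm concentration inequality, and the two-stage initial-plus-local estimation scheme — has already been carried out in proving Theorem~\ref{thm:globalmle_highdim}.
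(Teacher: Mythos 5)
Your proposal is correct and matches the paper's own proof essentially verbatim: the paper also sets $M = I_d$ (so $T = \I_R^{-1}$ and $d_R = d_{\eff}(\I_R^{-1})$) and takes $\gamma = C_0/\eps$ for a sufficiently large constant $C_0$ in Theorem~\ref{thm:globalmle_highdim}, absorbing the resulting constant factors into the $O_\eps(\cdot)$ sample-complexity bound. The only cosmetic difference is that the paper first caps $\eps$ at $1/4$, which your choice of a sufficiently large $\gamma = \Theta(1/\eps)$ handles implicitly.
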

\begin{proof}
    First, if $\eps > 1/4$, we reset $\eps = 1/4$. Setting $M = I_d$ so that $d_R = d_\eff(\I_R^{-1})$, and setting $\gamma = \frac{C_0}{\eps}$ for sufficiently large constant $C_0$ in Theorem~\ref{thm:globalmle_highdim} gives the claim.
\end{proof}

\section{Useful Results}

The following is a continuous version of the rearrangement inequality \cite{rearrangement}:

\begin{lemma}\label{lem:rearrangement}
  Let $f, g: \R \to \R$ be monotonically non-decreasing functions, and $X$ be a random
  variable over $\R$.  Then
  \[
    \E[f(X)]\E[g(X)] \leq \E[f(X) g(X)]
  \]
\end{lemma}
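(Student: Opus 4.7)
The plan is the classical doubling / symmetrization trick for rearrangement-style inequalities. I would introduce an independent copy $Y$ of $X$ on the same probability space and exploit the observation that when both $f$ and $g$ are non-decreasing, the differences $f(X) - f(Y)$ and $g(X) - g(Y)$ always share the same sign pointwise: if $X \ge Y$ then both are non-negative, and if $X \le Y$ then both are non-positive. Consequently, $(f(X) - f(Y))(g(X) - g(Y)) \ge 0$ deterministically (almost surely), so its expectation is non-negative.

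Expanding this product and using linearity of expectation gives
\[
0 \le \E[(f(X) - f(Y))(g(X) - g(Y))] = \E[f(X)g(X)] - \E[f(X)g(Y)] - \E[f(Y)g(X)] + \E[f(Y)g(Y)].
\]
Because $X$ and $Y$ are independent and identically distributed, $\E[f(X)g(Y)] = \E[f(X)]\E[g(Y)] = \E[f(X)]\E[g(X)]$, and the symmetric cross term is the same; the two diagonal terms together contribute $2\E[f(X)g(X)]$. Dividing by $2$ yields the claimed inequality $\E[f(X)]\E[g(X)] \le \E[f(X)g(X)]$.

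The one technical point is that all four expectations in the expansion must be well-defined for the manipulation to be legitimate; if $f$ and $g$ are unbounded this is handled by truncating each of $f$ and $g$ at level $\pm N$ (which preserves monotonicity), applying the bounded case, and passing to the limit via monotone or dominated convergence. In the paper's application (Lemma~\ref{lem:score_jacobian}) the two functions are a Gaussian-weighted exponential and a linear functional of the Gaussian increment, so all expectations are finite and no truncation is needed. There is no substantive obstacle; the entire argument is essentially the one-line symmetrization above.
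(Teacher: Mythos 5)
Your proposal is correct and is essentially identical to the paper's proof: the same symmetrization with an independent copy $Y$, the pointwise sign observation $(f(X)-f(Y))(g(X)-g(Y)) \ge 0$, and expansion using independence. Your remark on truncation/integrability is a reasonable extra precaution but not part of the paper's (implicitly finite-expectation) argument.
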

\begin{proof}
  Let $Y$ be an independent copy of $X$.  By monotonicity,
  \[
    (f(X) - f(Y))(g(X) - g(Y)) \geq 0
  \]
  always.  Taking the expectation of both sides,
  \[
    2 \E[f(X) g(X)] - 2 \E[f(X) g(Y)] \geq 0.
  \]
  Since $Y$ is independent of $X$, this gives the result.
\end{proof}

\begin{lemma}
\label{lem:fisher_lower_bound}
Let $f$ be an arbitrary distribution on $\R^d$, and let $\Sigma$ be its covariance matrix. Let $f_R$ be the $R$-smoothed version of $f$, with Fisher information matrix $\I_R$. Then,
    $$\I_R \succcurlyeq (\Sigma + R)^{-1}$$
\end{lemma}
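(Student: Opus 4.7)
The plan is to recognize this as a matrix Cram\'er–Rao bound for the translation family $\{f_R(\cdot - \lambda) : \lambda \in \R^d\}$ and prove it via a direct Cauchy–Schwarz argument. First, since $X = Y + Z_R$ with $Y \sim f$ and $Z_R \sim \mathcal{N}(0, R)$ independent, $f_R$ has covariance $\Sigma_R := \Sigma + R$, which is positive definite (hence invertible) because $R \succ 0$. Let $\mu$ denote the mean of $f_R$.

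The key identity to establish is the cross-covariance relation
\[
\E_{X \sim f_R}\bigl[(X - \mu)\, s_R(X)^\top\bigr] \;=\; -I_d,
\]
which I would prove coordinatewise by integration by parts: $\int (x_i - \mu_i)\, \partial_j f_R(x)\, dx = -\delta_{ij}$, using that $s_R = \nabla f_R / f_R$. This is valid because $f_R$ is $C^\infty$ and decays super-polynomially in every direction, being a convolution with a non-degenerate Gaussian, so boundary terms vanish regardless of how irregular the underlying $f$ is.

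Given that identity, for any $v, w \in \R^d$ Cauchy–Schwarz yields
\[
(v^\top w)^2 \;=\; \bigl(\E[v^\top(X - \mu)\cdot w^\top s_R(X)]\bigr)^2 \;\le\; (v^\top \Sigma_R v)(w^\top \I_R w).
\]
Specializing to $v = \Sigma_R^{-1} w$, the left side becomes $(w^\top \Sigma_R^{-1} w)^2$ and the first factor on the right is $w^\top \Sigma_R^{-1} w$, so after dividing (the degenerate case $w^\top \Sigma_R^{-1} w = 0$ is trivial) one obtains $w^\top \Sigma_R^{-1} w \le w^\top \I_R w$ for every $w$, which is exactly $\I_R \succcurlyeq (\Sigma + R)^{-1}$. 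The only potential obstacle is justifying the integration by parts, but this is handled automatically by the Gaussian smoothing ensuring rapid decay of $f_R$ and its derivatives.
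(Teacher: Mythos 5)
Your proof is correct, but it takes a different route from the paper: the paper disposes of this lemma in one line by noting that the covariance of $f_R$ is $\Sigma + R$ and citing an external result (Theorem 1.2 of the reference it calls ``Fisher\_upper\_bound'') which gives $\I \succcurlyeq \Sigma^{-1}$ for the smoothed distribution, whereas you reprove that information inequality from scratch as a matrix Cram\'er--Rao bound: the cross-covariance identity $\E_{X \sim f_R}[(X-\mu)s_R(X)^\top] = -I_d$ followed by Cauchy--Schwarz with the substitution $v = (\Sigma+R)^{-1}w$. Your argument is sound and has the advantage of being self-contained and of making explicit where the Gaussian smoothing is used (it supplies the smoothness and the validity of the differentiation/integration-by-parts step that a general $f$ need not enjoy), at the cost of having to verify that regularity yourself; the paper's citation route is shorter but leans on the external theorem to absorb exactly those regularity issues. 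One small inaccuracy in your justification: $f_R$ need \emph{not} decay super-polynomially in every direction --- if $f$ has heavy (but finite-variance) tails, say density $\sim |x|^{-(3+\delta)}$ in one dimension, then $f_R = f * \Normal(0,R)$ inherits that polynomial tail. The clean fix is to avoid boundary terms altogether: write $\partial_j f_R = f * \partial_j \phi_R$ where $\phi_R$ is the Gaussian density, and compute $\int (x_i-\mu_i)\,\partial_j f_R(x)\,\d x$ by Fubini (justified since $f$ has a finite first moment and $\partial_j\phi_R$ has all moments), reducing to $\int u_i\,\partial_j \phi_R(u)\,\d u = -\delta_{ij}$; equivalently one can use the paper's Lemma~\ref{lem:smoothed_score} representation of $s_R$ as a conditional expectation of $R^{-1}Z_R$ and compute the cross-covariance directly. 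With that repair (and noting finiteness of $\I_R$, e.g.\ $\I_R \preceq R^{-1}$, so Cauchy--Schwarz is non-vacuous), your derivation of $w^\top(\Sigma+R)^{-1}w \le w^\top \I_R w$ for all $w$ is complete.
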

\begin{proof}
    Follows from the fact that the covariance of $f_R$ is $\Sigma + R$, and using Theorem 1.2 from~\cite{Fisher_upper_bound}.
\end{proof}

\begin{lemma}
\label{lem:trace_AB_bound}
    Let $A, B$ be symmetric PSD matrices.  Then
    \[
    Tr(AB) \leq Tr(A) \norm{B}
    \]
\end{lemma}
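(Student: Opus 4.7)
The plan is to reduce the statement to the elementary fact that the trace of a product of two symmetric PSD matrices is nonnegative, combined with the standard PSD ordering $B \preceq \|B\| I$ that holds for any symmetric PSD matrix $B$ (since $\|B\|$ equals its largest eigenvalue).

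First, I would observe that $\|B\| I - B \succeq 0$ since all eigenvalues of $B$ are at most $\|B\|$. Next, since $A$ is symmetric PSD, we can take its PSD square root $A^{1/2}$, and form the matrix $A^{1/2}(\|B\| I - B) A^{1/2}$. This matrix is PSD because conjugation $X \mapsto A^{1/2} X A^{1/2}$ preserves PSD-ness: for any vector $v$, we have $v^\top A^{1/2}(\|B\| I - B) A^{1/2} v = (A^{1/2} v)^\top (\|B\| I - B)(A^{1/2} v) \geq 0$.

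Finally, I would take the trace and use cyclicity: since $A^{1/2}(\|B\| I - B) A^{1/2}$ is PSD, its trace is nonnegative, so
\[
0 \leq \Tr\bigl(A^{1/2}(\|B\| I - B) A^{1/2}\bigr) = \Tr\bigl(A(\|B\| I - B)\bigr) = \|B\| \Tr(A) - \Tr(AB),
\]
which rearranges to the desired inequality $\Tr(AB) \leq \Tr(A) \|B\|$.

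There is no real obstacle here; the only subtlety is ensuring that both PSD-ness of the product and the cyclicity of trace are applied to symmetric factors (so that the trace of the product is real and well-behaved), which is handled by symmetrizing via $A^{1/2}$ on both sides.
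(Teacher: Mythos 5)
Your proof is correct. It takes a slightly different route from the paper's: the paper diagonalizes $B$ and writes $\Tr(AB) = \sum_i v_i^\top A (B v_i) = \sum_i \mu_i\, v_i^\top A v_i$ over the eigenpairs $(\mu_i, v_i)$ of $B$, then bounds each eigenvalue by $\norm{B}$ and uses $v_i^\top A v_i \ge 0$; you instead use the operator inequality $B \preceq \norm{B} I$, conjugate by $A^{1/2}$ to preserve positive semidefiniteness, and conclude via cyclicity of the trace. The two arguments are of comparable length and both elementary. Your version avoids invoking a spectral decomposition explicitly and generalizes immediately to statements of the form $\Tr(AC) \le \Tr(AB)$ whenever $C \preceq B$ and $A \succeq 0$, while the paper's eigenbasis computation is more concrete and avoids introducing the square root $A^{1/2}$. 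Either proof would serve equally well here.
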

\begin{proof}
Let  the eigenvectors of $B$ be  $v_1, \dots, v_d$.
Then
\[
    \Tr(AB) = \sum_{i=1}^d v_i^T A (B v_i)  \le \|B\| \sum_{i=1}^d v_i^T A v_i = \|B\| \Tr(A).
\]    
\end{proof}

\end{document}